\documentclass[12pt]{amsart}
\textwidth =160mm \textheight =230mm
\oddsidemargin -0.3mm \evensidemargin -0.3mm
\headheight=13pt \setlength{\topmargin}{-0.5cm}
\usepackage{amsmath,amsthm,mathrsfs,amsfonts,amssymb,color}

\newtheorem{thm}{Theorem}[section]
\newtheorem{lem}[thm]{Lemma}

\newtheorem{pro}[thm]{Proposition}
\newtheorem{defn}[thm]{Definition}

\parskip 0.2 cm

\def\binfty {\dot{B}_\infty^{\beta,\infty}}

\numberwithin{equation}{section}
\allowdisplaybreaks
\begin{document}

\title[Lipschitz and Triebel--Lizorkin spaces in Dunkl setting]
{Lipschitz and Triebel--Lizorkin spaces, commutators\\ in Dunkl setting}

\thanks{JL's research supported by ARC DP 220100285 and NNSF (grant no. 12171221 and 12071197). 
MYL's research supported by NSTC 112-2115-M-008-001-MY2.
BDW’s research is supported in part by National Science Foundation Grants DMS \#1800057, \#2054863, and \#20000510 and Australian Research Council – DP 220100285.
}

\author{Yongsheng Han}
\address{Yongsheng Han, Department of Mathematics, Auburn University, Auburn, Alabama, USA}
\email{hanyong@auburn.edu}

\author{Ming-Yi Lee}
\address{Ming-Yi Lee, Department of Mathematics, National Central University, Taiwan, Republic of China}
\email{mylee@math.ncu.edu.tw}

\author{Ji Li}
\address{Ji Li, Department of Mathematics, Macquarie University, Sydney}
\email{ji.li@mq.edu.au}

\author{Brett D. Wick}
\address{Brett D. Wick, Department of Mathematics\\
         Washington University - St. Louis\\
         St. Louis, MO 63130-4899 USA
         }
\email{wick@math.wustl.edu}

\begin{abstract}
We first study the Lipschitz spaces $\Lambda_{d}^\beta$ associated with the Dunkl metric, $\beta\in(0,1)$, and prove that it is a proper subspace of the classical  Lipschitz spaces $\Lambda^\beta$ on $\mathbb R^N$, as the Dunkl metric and the Euclidean metric are non-equivalent. Next, we further show that the Lipschitz spaces $\Lambda^\beta$ connects to the
Triebel--Lizorkin spaces $\dot{ F}^{\alpha,q}_{p,{\rm D}}$ associated with the Dunkl Laplacian $\triangle_{\rm D}$ in $\mathbb R^ N $ and to  the commutators of the Dunkl Riesz transform and the  fractional Dunkl Laplacian $\triangle_{\rm D}^{-\alpha/2}$, $0<\alpha<\textbf{N}$ (the homogeneous dimension for Dunkl measure), which is represented via the functional calculus of the Dunkl heat semigroup $e^{-t\triangle_{\rm D}}$.  The key steps in this paper are a finer decomposition of the underlying space via Dunkl metric and Euclidean metric to bypass the use of Fourier analysis, and a discrete weak-type Calder\'on reproducing formula in these new Triebel--Lizorkin spaces $\dot{ F}^{\alpha,q}_{p,{\rm D}}$.

\end{abstract}

\subjclass[2010]{42B20, 42B35}
\keywords{Dunkl--Triebel--Lizorkin spaces, Calder\'on type reproducing formula , Dunkl Riesz transforms}

\maketitle
\section {Introduction and statement of main results}\label{sec-Intro}
\subsection{Background}
It is well-known that the Lipschitz spaces and Triebel--Lizorkin spaces play an important role in modern harmonic analysis and partial differential equations. 

In this paper, we establish a theory of Lipschitz spaces in the Dunkl setting
and prove that the commutators of Dunkl Riesz transforms  characterize these spaces. Important tools are the reproducing formula built on Dunkl--Triebel--Lizorkin spaces and their duality, which is of independent interest for other  function spaces and the related differential equations in Dunkl setting.

It is well known that group structures enter in a decisive way in harmonic analysis. Dunkl theory is associated with a finite reflection group in $\mathbb R^N$. It started with the papers
\cite{D1, D2, D3} and was further developed in the papers \cite{AAS, AS,  DW, DK, Jeu, LL, R, S, TX}. In \cite{ADH}, the authors extended the classical Hardy $H^1$ to the rational Dunkl setting. 
To be more precise, in $\mathbb R^N$, the reflection $\sigma_\alpha$ with respect to the hyperplane $\alpha_\bot$ orthogonal to a nonzero vector $\alpha$ is given by
$ \sigma_\alpha(x) = x- 2 { \langle x,\alpha\rangle} \alpha\ \|\alpha\|^{-2}.$

A finite set $R\subset \mathbb R^N\backslash\{0\}$ is called a \textit{root system} if $\sigma_\alpha(R) = R$.
Let $R$ be a root system in $\mathbb{R}^{N}$ normalized so that $\langle\alpha,\alpha\rangle=2$ for $\alpha\in R$ and with $R_+$ a fixed positive subsystem, and $G$ the finite reflection group generated by the reflections $\sigma_\alpha$ ($\alpha\in R$), where $\sigma_{\alpha}(x)=x-\langle\alpha,x\rangle\alpha$ for $x\in\mathbb{R}^{N}$.  Corresponding to this reflection group, we denote by $\mathcal{O}(x)$ 
the $G$-orbit of a point
$ x\in\mathbb{R}^{N}$.  There is a natural metric between two G-orbits $\mathcal{O}(x)$ and $\mathcal{O}(y)$, given by
$$d(x,y):=\min\limits_{\sigma\in G}\|x-\sigma(y)\|.$$
It is clear that $d(x,y)\leq \|x-y\|$. And it is possible that for certain $x,y\in\mathbb R^N$, $d(x,y)=0$ while $\|x-y\|>0$.

The closures of connected components of $\{x\in \mathbb R^N : \langle x,\alpha\rangle\ne 0$ for all $\alpha\in R\}$
are called (closed) \textsl{Weyl chambers}. We remark that $\|x-y\|=d(x,y)$ if and only if $x, y\in \mathbb R^N$
belong to the same closed Weyl chamber, see \cite{DH3}.

A \textsl{multiplicity function} $\kappa$ defined on $R$ (invariant
under $G$) is fixed throughout this paper. Let
\begin{eqnarray*}
    d\omega(x)=\prod_{\alpha\in R}|\langle\alpha,x\rangle|^{\kappa(\alpha)}dx
\end{eqnarray*}
be the associated measure in $\mathbb{R}^{N},$ see \cite{ADH},
where, here and subsequently, $dx$ stands for the Lebesgue measure
in $\mathbb{R}^{N}$. We denote by $\textbf{N} = N
+\sum\limits_{\alpha\in R}\kappa(\alpha)$ the homogeneous dimension. The measure $d\omega$ satisfies 
\begin{equation}\label{rd1.1}
C^{-1}\bigg(\frac{r_2}{r_1}\bigg)^{N}\leqslant
\frac{w(B(x,r_2))}{w(B(x,r_1))}\leqslant C\bigg(\frac{r_2}{r_1}\bigg)^{\textbf N} \qquad\text{for}\quad
0<r_1<r_2,
\end{equation}
which implies that there is a constant $C > 0$ such that
for all $ x\in \mathbb{R}^{N}, r>0$ and $\lambda \geqslant 1,$
\begin{eqnarray}\label{rd}
C^{-1}\lambda^{N}\omega(B(x, r)) \leqslant \omega(B(x, \lambda
r))\leqslant C\lambda^{\textbf N}\omega(B(x, r)).
\end{eqnarray}

The Dunkl (differential) operators $T_j$ are defined by
$$T_jf(x)=\partial_j f(x) + \sum\limits_{\alpha\in R^+}\frac{\kappa(\alpha)}{2}\langle\alpha, e_j\rangle\frac{f(x)-f(\sigma_\alpha(x))}{\langle \alpha, x\rangle},$$
where $e_1, \ldots, e_N$ are the standard unit vectors of $\mathbb
R^N.$
The Dunkl Laplacian is then defined as
$\triangle_{\rm D}=\sum\limits^{N}_{j=1} T^2_j,$ which is equivalent to
$$\triangle_{\rm D} f(x)=\triangle_{\mathbb R^N} f(x) + \sum\limits_{\alpha \in R}\kappa(\alpha)\Big(\frac{\partial_{\alpha}f(x)}{\langle\alpha,
	x\rangle}
-\frac{f(x)-f(\sigma_{\alpha}(x))}{\langle\alpha,
	x\rangle^2}\Big).$$
Here $\triangle_{\mathbb R^N}$ is the standard Euclidean Laplacian.

The operator $\triangle_{\rm D}$ is self-adjoint on
$L^2(\mathbb R^N,\omega)$, see \cite{ADH}, and generates the heat semigroup
$$H_tf(x)=e^{t\triangle_{\rm D}}f(x)=\int_{\mathbb R^N}h_t(x,y)f(y)d\omega(y),$$
where the heat kernel $h_t(x,y)$ is a $C^\infty$ function for all
$t>0, x, y\in \mathbb R^N$ and satisfies $h_t(x, y)=h_t(y, x)>0$ and
$\int_{\mathbb R^N} h_t(x, y) d\omega(y)=1.$ The size and regularity estimate of 
$h_t(x, y)$ were established in \cite{ADH}.

\noindent{\bf Theorem A (\cite[Theorem 4.1]{ADH}).}
{\it  
   There are constants $C,c>0$ such that
  
{\rm (a)}  for every $t>0$ and for every $x,y\in \mathbb R^N$, $${1\over C{\min\{\omega(B(x,\sqrt t)), \omega(B(y,\sqrt t))\}}}e^{-c\|x-y\|^2/t}\leq |h_t(x,y)|\le CV(x,y,\sqrt t)^{-1}e^{-cd(x,y)^2/t},$$
 where $V(x,y,r):=\max\{\omega(B(x,r)),\omega(B(y,r))\}$;

{\rm  (b)} for every $t>0$ and for every $x,y,y'\in \mathbb R^N$ such that $\|y-y'\|<\sqrt t$,
  $$|h_t(x,y)-h(x,y')|\le C\bigg(\frac{\|y-y'\|}{\sqrt t}\bigg)V(x,y,\sqrt t)^{-1}e^{-cd(x,y)^2/t}.$$

}

In \cite{DH2}, they improve the upper and lower bounds for Dunkl heat kernel : for all $c_l>\frac14$ and $0<c_u<\frac14$
there are constants $C_l, C_u>0$ such that 
$$C_l\omega(B(x,\sqrt t))^{-1}e^{-c_l\frac{d(x,y)^2}t}\Lambda(x,y,t)\leq h_t(x,y)\le C_u\omega(B(x,\sqrt t))^{-1}e^{-c_u\frac{d(x,y)^2}t}\Lambda(x,y,t)$$
where $\Lambda(x,y,t)$ can be expressed by means of some rational functions of $\|x-\sigma(y)\|/\sqrt t$.

\subsection{Statement of main results}
\subsubsection{Part I: Lipschitz Spaces}

We recall the standard Lipschitz space on $\mathbb R^N$ and introduce a new  Lipschitz space associated with the Dunkl metric $d$.
\begin{defn}
For $\beta\in (0,1)$, the Lipschitz space $\Lambda^\beta$ is the set of functions $f$ defined on $\mathbb R^N$ such that 
\begin{align}
\|f\|_{\Lambda^\beta}:=&\sup_{x\ne y} \frac{|f(x)-f(y)|}{\|x-y\|^\beta}<\infty.\label{e4.0}
\end{align}
We now introduce the Lipschitz space $\Lambda^\beta_d$ associated with the Dunkl metric $d$ as follows.
For $\beta\in (0,1)$,  $\Lambda^\beta_d$ is the set of functions $f$ defined on $\mathbb R^N$ such that 
\begin{align}\label{e4.0d}
\|f\|_{\Lambda^\beta_d}:=\sup_{d(x,y)\ne0} \frac{|f(x)-f(y)|}{d(x,y)^\beta}<\infty.
\end{align}

\end{defn}    

We will show  equivalent definitions of these two spaces which will be useful in applications to commutator operators.
\begin{thm}\label{lip and lipd}
For $\beta\in (0,1)$ and $q\in [1, \infty)$, the following are equivalent to \eqref{e4.0}:
\begin{align}
&\sup_{B\subseteq \mathbb R^ N }\frac 1{\ell(B)^\beta\omega(B)}\int_B |f(x)-f_
B|d\omega(x);\label{e4.1} \\
&\sup_{B\subseteq \mathbb R^ N }\frac 1{\ell(B)^{\beta}}\bigg(\frac1{\omega(B)}\int_B |f(x)-f_B|^q d\omega(x)\bigg)^{\frac1q},\label{e4.2} 
\end{align}
where the sup is taken over all balls $B$ in $\mathbb{R}^N$ and $f_B = \frac 1{\omega(B)}\int_B f(x )d\omega(x)$.

Moreover, the following are equivalent to  \eqref{e4.0d}:
\begin{align}
&\sup_{B\subseteq \mathbb R^ N }\frac 1{\ell(B)^{\beta}\omega(\mathcal O(B))}\int_{\mathcal O(B)} |f(x)-f_{\mathcal O(B)}| d\omega(x); \label{e4.1d}\\
&\sup_{B\subseteq \mathbb R^ N }\frac 1{\ell(B)^{\beta   }}\bigg(\frac1{\omega(\mathcal O(B))}\int_{\mathcal O(B)} |f(x)-f_{\mathcal O(B)}|^q d\omega(x)\bigg)^{\frac1q},\label{e4.2d}
\end{align}
where the sup is taken over all balls $B$ in $\mathbb{R}^N$ and $f_{\mathcal O(B)} = \frac 1{\omega(\mathcal O(B))}\int_{\mathcal O(B)} f(x )d\omega(x)$.

\end{thm}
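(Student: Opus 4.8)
The plan is to settle the three Euclidean quantities \eqref{e4.0}, \eqref{e4.1}, \eqref{e4.2} first, and then transfer everything to the quotient space $\mathbb R^N/G$ equipped with the metric $d$. Write $A_1$ and $A_q$ for the suprema in \eqref{e4.1} and \eqref{e4.2} (taking $\ell(B)$ to be the radius of $B$), and recall that $\omega$ is doubling with respect to Euclidean balls by \eqref{rd1.1}--\eqref{rd}. I would run the cyclic chain \eqref{e4.0}$\Rightarrow$\eqref{e4.2}$\Rightarrow$\eqref{e4.1}$\Rightarrow$\eqref{e4.0}. The first implication is a direct pointwise estimate: if $x,y\in B$ then $\|x-y\|\le2\ell(B)$, so $|f(x)-f_B|\le\omega(B)^{-1}\int_B|f(x)-f(y)|\,d\omega(y)\le2^\beta\|f\|_{\Lambda^\beta}\ell(B)^\beta$, and averaging $|f(x)-f_B|^q$ over $x\in B$ preserves this bound, giving $A_q\le2^\beta\|f\|_{\Lambda^\beta}$. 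The second implication, $A_1\le A_q$, is Jensen's inequality applied to the inner average.

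The substantive implication is \eqref{e4.1}$\Rightarrow$\eqref{e4.0}, which I would obtain by the classical telescoping argument over dyadic scales. Fix $x\ne y$, put $r=\|x-y\|$, and set $B_k=B(x,2^{1-k}r)$ for $k\ge0$. Since $B_{k+1}\subset B_k$ and $\omega(B_k)\le C\omega(B_{k+1})$ by doubling,
\[
|f_{B_{k+1}}-f_{B_k}|\le\frac1{\omega(B_{k+1})}\int_{B_{k+1}}|f-f_{B_k}|\,d\omega\le\frac{\omega(B_k)}{\omega(B_{k+1})}\,A_1\,\ell(B_k)^\beta\le CA_1\,(2^{1-k}r)^\beta .
\]
As $\beta>0$ the series $\sum_{k\ge0}(2^{1-k}r)^\beta$ converges, so $\{f_{B_k}\}_{k}$ is Cauchy and, at every Lebesgue point $x$ of $f$ (such points having full measure since $\omega$ is doubling), it converges to $f(x)$ with $|f(x)-f_{B(x,2r)}|\le CA_1r^\beta$; the same estimate holds at $y$ with $B(y,2r)$. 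Since $B(x,2r)\cup B(y,2r)\subset B(x,4r)$, doubling together with the $q=1$ bound in \eqref{e4.1} applied on $B(x,4r)$ gives $|f_{B(x,2r)}-f_{B(y,2r)}|\le CA_1r^\beta$, whence $|f(x)-f(y)|\le CA_1r^\beta$. Identifying $f$ with its continuous representative we conclude $\|f\|_{\Lambda^\beta}\le CA_1$, closing the chain.

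For the Dunkl-metric statements the key geometric facts are: (i) $\mathcal O(B(x,r))=\bigcup_{\sigma\in G}B(\sigma x,r)=\{z\in\mathbb R^N:d(x,z)<r\}$, so the orbit of a Euclidean ball is precisely a $d$-ball; (ii) the $G$-invariance of $\omega$ gives $\omega(B(x,r))\le\omega(\mathcal O(B(x,r)))\le|G|\,\omega(B(x,r))$, hence the family of $d$-balls is doubling and $(\mathbb R^N/G,d,\omega)$ is a space of homogeneous type; (iii) $d$ is a symmetric pseudometric obeying the triangle inequality, with $d(x,\sigma x)=0$ for every $\sigma\in G$. From (iii) together with the Lebesgue differentiation attached to the doubling family $\{\mathcal O(B(x,r))\}_{r>0}$, any $f$ for which \eqref{e4.1d} is finite is a.e. equal to a $G$-invariant function: letting $\ell(B)\to0$ in \eqref{e4.1d} forces $f_{\mathcal O(B(x,r))}$ to be simultaneously the Lebesgue value of $f$ at each point of the orbit $\mathcal O(x)$. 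Granting this, I would repeat the three steps above essentially verbatim with $B$ replaced by $\mathcal O(B)$, $\|\cdot\|$ replaced by $d$, and the doubling of $\omega$ replaced by (ii): for \eqref{e4.0d}$\Rightarrow$\eqref{e4.2d} one uses that $x,y\in\mathcal O(B)$ forces $d(x,y)<2\ell(B)$ by the triangle inequality for $d$; and for \eqref{e4.1d}$\Rightarrow$\eqref{e4.0d} one telescopes along $\mathcal O(B(x,2^{1-k}r))$ with $r=d(x,y)$, chooses $\sigma_0\in G$ realizing $\|x-\sigma_0y\|=d(x,y)$, and uses the $G$-invariance of $f$ to replace $f(y)$ by $f(\sigma_0y)$, so that the endpoint comparison again takes place inside a single $d$-ball.

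I expect the only genuine obstacle to be the hard direction \eqref{e4.1}$\Rightarrow$\eqref{e4.0} and its orbit analogue: one must use the doubling inequalities in the correct direction and choose the chain of balls so that the geometric series in the scale parameter converges, and in the Dunkl setting one must additionally verify the automatic $G$-invariance and check that passing from balls to orbit balls preserves both doubling and the Lebesgue-point structure. The remaining implications are soft --- pointwise bounds plus H\"older's inequality --- and should be routine.
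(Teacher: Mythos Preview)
Your proposal is correct and, for the hard implication \eqref{e4.1}$\Rightarrow$\eqref{e4.0}, takes a genuinely different and more elementary route than the paper. The paper does not telescope over dyadic balls; instead it invokes an external equivalence between $\|f\|_{\Lambda^\beta}$ and a Besov-type seminorm $\sup_{k,x}\mathfrak R^{-k\beta}|E_kf(x)|$ on the space of homogeneous type $(\mathbb R^N,\|\cdot\|,\omega)$ (citing \cite{DeH,Lin}), and then bounds $\mathfrak R^{-k\beta}|E_kf(x)|$ by exploiting the cancellation $\int E_k(x,y)\,d\omega(y)=0$ to subtract $f_{B(x,\mathfrak R^k)}$ and summing the kernel decay over annuli, which reduces directly to \eqref{e4.1}. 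Your Campanato-style telescoping avoids the Littlewood--Paley machinery and the appeal to outside results, at the mild cost of the Lebesgue-point/continuous-representative bookkeeping you flag. For the Dunkl-metric half, the paper simply observes that $(\mathbb R^N/G,d,\omega)$ is a space of homogeneous type and declares the argument ``similar''; your treatment is more explicit, and your observation that finiteness of \eqref{e4.1d} forces a.e.\ $G$-invariance of $f$ (via the limit of orbit-ball averages) is a point the paper leaves implicit. Both approaches are sound; yours is self-contained, while the paper's has the advantage of tying $\Lambda^\beta$ into the Besov scale that is used later in the article.
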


Finally, we have the following containment relation.
\begin{thm}\label{lip comp}
For $\beta\in (0,1)$, $ \Lambda^\beta_d\subsetneq \Lambda^\beta $.
\end{thm}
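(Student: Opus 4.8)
The plan is to establish the inclusion $\Lambda^\beta_d\subseteq\Lambda^\beta$ first and then exhibit an explicit function lying in $\Lambda^\beta$ but not in $\Lambda^\beta_d$, which gives strictness. The conceptual point is the identification
\[
\Lambda^\beta_d=\bigl\{\,f\in\Lambda^\beta:\ f\circ\sigma=f\ \text{for every }\sigma\in G\,\bigr\},
\]
with comparable seminorms. Once this is known, strictness follows as soon as one produces a $\beta$-Hölder function that fails to be $G$-invariant, and such functions clearly exist whenever the reflection group is nontrivial (if $R=\emptyset$ then $d=\|\cdot\|$ and the two spaces coincide, so the hypothesis $R\neq\emptyset$ is implicit in the statement).

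To prove the inclusion together with $G$-invariance, fix $f$ with $\|f\|_{\Lambda^\beta_d}<\infty$. Since $d(x,y)\le\|x-y\|$, for every pair $x\ne y$ with $d(x,y)\ne0$ we have $|f(x)-f(y)|\le\|f\|_{\Lambda^\beta_d}\,d(x,y)^\beta\le\|f\|_{\Lambda^\beta_d}\,\|x-y\|^\beta$. It remains to handle pairs with $d(x,y)=0$ and $x\ne y$, i.e.\ $y=\sigma(x)$ for some $\sigma\in G$; I claim $f(x)=f(\sigma x)$, so that these contribute nothing. Indeed $d(x,\sigma x)\le\|x-\sigma^{-1}(\sigma x)\|=0$, whence by the triangle inequality for $d$ one gets $d(z,\sigma x)=d(z,x)$ for every $z$. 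Since $\mathcal O(x)=\{z:d(z,x)=0\}$ is finite, there are points $z$ with $\|z-x\|$ arbitrarily small and $d(z,x)\ne0$; for such $z$, $d(z,x)\le\|z-x\|\to0$, hence $|f(z)-f(x)|\le\|f\|_{\Lambda^\beta_d}\,d(z,x)^\beta\to0$ and likewise $|f(z)-f(\sigma x)|\le\|f\|_{\Lambda^\beta_d}\,d(z,\sigma x)^\beta=\|f\|_{\Lambda^\beta_d}\,d(z,x)^\beta\to0$, so letting $z\to x$ yields $f(x)=f(\sigma x)$. This proves that $f$ is $G$-invariant and, combined with the first estimate, that $\|f\|_{\Lambda^\beta}\le\|f\|_{\Lambda^\beta_d}$; the reverse bound on the $G$-invariant subspace (explaining the displayed identity, though not needed below) follows by picking $\sigma$ realizing $d(x,y)=\|x-\sigma y\|$ and using $f(y)=f(\sigma y)$.

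For strictness, fix a root $\alpha\in R$ and set $f(x)=\sin\langle x,\alpha\rangle$. Using $\|\alpha\|^2=2$ one has $|f(x)-f(y)|\le\min\{2,\sqrt2\,\|x-y\|\}$, and since $t\mapsto\min\{2,\sqrt2\,t\}\,t^{-\beta}$ is bounded on $(0,\infty)$ (it is $O(t^{1-\beta})$ as $t\to0$ and $O(t^{-\beta})$ as $t\to\infty$), we get $f\in\Lambda^\beta$. On the other hand $f(\sigma_\alpha x)=\sin\langle x,\sigma_\alpha\alpha\rangle=\sin\langle x,-\alpha\rangle=-f(x)$, so $f\circ\sigma_\alpha=-f\ne f$ and $f$ is not $G$-invariant; by the identification above, $f\notin\Lambda^\beta_d$. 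Hence $\Lambda^\beta_d\subsetneq\Lambda^\beta$.

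The step I expect to need the most care is the $G$-invariance argument: one must use that the supremum defining $\|\cdot\|_{\Lambda^\beta_d}$ in \eqref{e4.0d} ranges only over pairs with $d(x,y)\ne0$, transport control near $x$ to control near the orbit point $\sigma x$ via the identity $d(z,\sigma x)=d(z,x)$, and justify the existence of admissible test points $z$ (which relies on the finiteness of $\mathcal O(x)$). Everything else is bookkeeping. Alternatively, the same facts can be extracted from the orbit-average characterizations \eqref{e4.1d}--\eqref{e4.2d} of Theorem \ref{lip and lipd}, but the direct route above is shorter and self-contained.
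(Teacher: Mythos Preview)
Your proof is correct and in fact sharper than the paper's. Both arguments use $d(x,y)\le\|x-y\|$ for the easy half of the inclusion; the difference lies in the treatment of pairs $(x,y)$ with $y\in\mathcal O(x)$, $y\ne x$. The paper simply writes
\[
\sup_{x\ne y}\frac{|f(x)-f(y)|}{\|x-y\|^\beta}\le \sup_{d(x,y)\ne 0}\frac{|f(x)-f(y)|}{\|x-y\|^\beta}
            + \sup_{\substack{y=\sigma(x),\ \sigma\in G\\ y\ne x}}\frac{|f(x)-f(y)|}{\|x-y\|^\beta}<\infty
\]
``since $G$ is a finite group'', which as stated does not explain why the second supremum is finite. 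Your argument actually justifies this step by proving the stronger fact that every $f\in\Lambda^\beta_d$ is $G$-invariant (so the second supremum vanishes), and in doing so yields the pleasant identification $\Lambda^\beta_d=\{f\in\Lambda^\beta:f\circ\sigma=f\ \forall\,\sigma\in G\}$. For strictness, the paper produces a one-dimensional $C^1$ example ($f\equiv-1$ on $(-\infty,-\pi/2]$, $\sin x$ on $(-\pi/2,\pi/2)$, $\equiv 1$ on $[\pi/2,\infty)$) and checks directly that the $\Lambda^\beta_d$ quotient blows up. Your example $f(x)=\sin\langle x,\alpha\rangle$ works in $\mathbb R^N$ for any nonempty root system and is immediate once the $G$-invariance characterization is in hand, since $f\circ\sigma_\alpha=-f\ne f$. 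So your route is at least as short, more conceptual, and more general.
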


\subsubsection{Part II: The Dunkl--Triebel--Lizorkin Space and Commutators}

Here and throughout, we denote by $L^p(\mathbb R^N,\omega)$, $1\leq p<\infty$, the set of all measurable functions $f$ on $\mathbb R^N$ such that  
$\|f\|_{L^p_\omega}:=\big\{\int_{\mathbb R^N} |f(x)|^p d\omega(x)\big\}^{1/p}<\infty.$

We now introduce a new test function space as follows and then introduce the Dunkl--Triebel--Lizorkin spaces.
\begin{defn}\rm
Let ${\mathfrak R}>1$ be a fixed constant. Let $q_{{\mathfrak R}^k}$ and $\psi_{{\mathfrak R}^k}$ be given as in Theorem \ref{CRF}. 
For $|\alpha|<1<{\mathfrak R}$, $1< p<\infty$ and $1\leq q\leq\infty$, define
$\dot{\mathcal F}^{\alpha,q}_{p,{\rm D}}=\{f\in L^2(\Bbb R^N, \omega) : \|f\|_{\dot{\mathcal F}^{\alpha,q}_{p,{\rm D}}}<\infty\},$
where
$$\|f\|_{\dot{\mathcal F}^{\alpha,q}_{p,{\rm D}}}
	=\begin{cases} \displaystyle \Big\|\Big\{\sum_{k\in \mathbb Z}\sum_{Q\in Q^k} \big({\mathfrak R}^{k\alpha}|q_{{\mathfrak R}^k}(f)(x_Q)|\big)^q\chi_Q\Big\}^{1/q}\Big\|_{L^p_\omega}\quad & \text{if}\ \ 1\le q<\infty \\
	\displaystyle \Big\|\sup_{k\in \mathbb Z}\sum_{Q\in Q^k} {\mathfrak R}^{k\alpha}|q_{{\mathfrak R}^k}(f)(x_Q)|\chi_Q\Big\|_{L^p_\omega} &\text{if}\ \ q=\infty. \end{cases}$$
Similarly, define	
$\dot{\mathcal F}^{\alpha,q}_{p,\psi}=\{f\in L^2(\Bbb R^N, \omega) : \|f\|_{\dot{\mathcal F}^{\alpha,q}_{p,\psi}}<\infty\},$
where
$$\|f\|_{\dot{\mathcal F}^{\alpha,q}_{p,\psi}}
	=\begin{cases} \displaystyle \Big\|\Big\{\sum_{k\in \mathbb Z}\sum_{Q\in Q^k} \big({\mathfrak R}^{k\alpha}|\psi_{{\mathfrak R}^k}(f)(x_Q)|\big)^q\chi_Q\Big\}^{1/q}\Big\|_{L^p_\omega}\quad & \text{if}\ \ 1\le q<\infty \\
	\displaystyle \Big\|\sup_{k\in \mathbb Z}\sum_{Q\in Q^k} {\mathfrak R}^{k\alpha}|\psi_{{\mathfrak R}^k}(f)(x_Q)|\chi
	_Q\Big\|_{L^p_\omega} &\text{if}\ \ q=\infty. \end{cases}$$
\end{defn}

The Dunkl--Triebel--Lizorkin space is then given by the following definition.
\begin{defn} \label{Triebel}
Let $|\alpha|<1$, $1< p<\infty$ and $1\leq q\leq\infty$.
    The Dunkl--Triebel--Lizorkin space ${\dot F}^{\alpha,q}_{p,{\rm D}}$ is defined by the collection of distributions $f\in (\dot{\mathcal F}^{-\alpha,q'}_{p',{\psi}})^\prime$ such that 
    \begin{align}\label{series f}
    f(x)=\sum\limits_{j=-\infty}^\infty\sum\limits_{Q\in Q^j}\omega(Q)
    \lambda_{Q}\psi_{Q}(x,x_{Q})
    \end{align}
    with $\Big\|\Big\{
    \sum\limits_{j=-\infty}^\infty\sum\limits_{Q\in Q^{j}}|\lambda_{Q}|^q\chi_{Q}(\cdot)
    \Big\}^{1/q}\Big\|_{L^p_\omega}<\infty,$ where the series converges in the distributional sense and $\psi_Q=\psi_j$ if $Q\in Q^j.$
If $f\in {\dot F}^{\alpha,q}_{p,{\rm D}},$ the norm of $f$ is defined by
    $$\|f\|_{{\dot F}^{\alpha,q}_{p,{\rm D}}}:=\inf \Bigg\{\Big\|\Big\{ \sum\limits_{j=-\infty}^\infty
    \sum\limits_{Q\in Q^{j}}|\lambda_{Q}|^q\chi_{Q} (\cdot)
    \Big\}^{1/q}\Big\|_{L^p_\omega}\Bigg\},$$ where the infimum is taken over all
    $f(x)$ in the form \eqref{series f}.
\end{defn}
We remark that if $\Big\|\Big\{
\sum\limits_{j=-\infty}^\infty\sum\limits_{Q\in Q^{j}}|\lambda_{Q}|^q\chi_{Q}
\Big\}^{1/q}\Big\|_{L^p_\omega}<\infty,$ then the series in \eqref{series f} defines a distribution in $(\dot{\mathcal F}^{-\alpha,q'}_{p',{\psi}})^\prime.$ 
We have the following characterization of the  Dunkl--Triebel--Lizorkin space ${\dot F}^{\alpha,q}_{p,{\rm D}}$.
\begin{pro}\label{thm 1.5}
Let $|\alpha|<1$, $1< p<\infty$ and $1\leq q\leq\infty$.  Then 
    ${\dot F}^{\alpha,q}_{p,{\rm D}}=\overline{\dot{\mathcal F}^{\alpha,q}_{p,{\rm D}}},$
    where $\overline{\dot{\mathcal F}^{\alpha,q}_{p,{\rm D}}}$ is the collection of distributions $f\in (\dot{\mathcal F}^{-\alpha,q'}_{p',{\psi}})^\prime$ such that there exists a sequence $\{f_n\}_{n=1}^\infty$ in $L^2(\mathbb R^N,\omega)$ with $\|f_n-f_m\|_{\dot{\mathcal F}^{\alpha,q}_{p,{\rm D}}}\rightarrow 0$ as $n,m\rightarrow \infty.$ Moreover, $f_n$ converges to $f$ in $(\dot{\mathcal F}^{-\alpha,q'}_{p',{\psi}})^\prime.$ 
\end{pro}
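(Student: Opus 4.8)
The plan is to deduce Proposition~\ref{thm 1.5} from the discrete Calder\'on reproducing formula of Theorem~\ref{CRF}, passing through the coefficient sequence space. Write
$$\dot f^{\alpha,q}_p:=\Big\{\lambda=\{\lambda_Q\}_{Q\in Q^j,\,j\in\mathbb Z}:\ \|\lambda\|_{\dot f^{\alpha,q}_p}:=\Big\|\Big\{\sum_{j\in\mathbb Z}\sum_{Q\in Q^j}|\lambda_Q|^q\chi_Q\Big\}^{1/q}\Big\|_{L^p_\omega}<\infty\Big\},$$
with the usual sup modification when $q=\infty$; this is a Banach space. Introduce the synthesis map $S\lambda:=\sum_{j}\sum_{Q\in Q^j}\omega(Q)\lambda_Q\psi_Q(\cdot,x_Q)$. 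For $f\in L^2(\mathbb R^N,\omega)$ let $\lambda(f)$ be the coefficient sequence canonically attached to $f$ by Theorem~\ref{CRF}: it depends linearly on $f$, one has $S\lambda(f)=f$, and, with the normalisations fixed in that theorem, $\|\lambda(f)\|_{\dot f^{\alpha,q}_p}=\|f\|_{\dot{\mathcal F}^{\alpha,q}_{p,{\rm D}}}$. Since $\lambda(f)$ furnishes a representation \eqref{series f}, this already gives $\|f\|_{\dot F^{\alpha,q}_{p,{\rm D}}}\le\|f\|_{\dot{\mathcal F}^{\alpha,q}_{p,{\rm D}}}$ for every $f\in L^2(\mathbb R^N,\omega)$.

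The heart of the matter --- and the step I expect to be the main obstacle --- is the reverse bound
$$\|S\lambda\|_{\dot{\mathcal F}^{\alpha,q}_{p,{\rm D}}}\lesssim\|\lambda\|_{\dot f^{\alpha,q}_p}\qquad\text{whenever }\lambda\in\dot f^{\alpha,q}_p\text{ and }S\lambda\in L^2(\mathbb R^N,\omega).$$
The plan is to expand $q_{{\mathfrak R}^k}(S\lambda)(x_Q)$ term by term, reducing matters to estimates for the matrix $\big[\int_{\mathbb R^N}q_{{\mathfrak R}^k}(x_Q,y)\psi_{{\mathfrak R}^j}(y,x_{Q'})\,d\omega(y)\big]$; from the size, regularity and cancellation of the building blocks of Theorem~\ref{CRF}, together with the doubling property \eqref{rd}, one extracts decay in $|k-j|$ and in the relative position of $Q$ and $Q'$, and then dominates the sum against $\lambda$ by a vector-valued Hardy--Littlewood maximal function, controlled in $L^p_\omega$ by the Fefferman--Stein inequality (this is where $1<p<\infty$ is used). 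The delicate point, and the reason this is the main obstacle, is that these kernels enjoy Gaussian-type decay only in the Dunkl metric $d$, while in the Euclidean metric $\|\cdot\|$ the control is much weaker away from a single Weyl chamber (compare Theorem~\ref{lip comp}); to sum the resulting kernel with uniform constants one must carry out the finer decomposition of $\mathbb R^N$ by both metrics announced in the abstract and organise the sums over scales and cubes accordingly. The same expansion also yields, as a by-product, that $S$ extends to a bounded operator from $\dot f^{\alpha,q}_p$ into $(\dot{\mathcal F}^{-\alpha,q'}_{p',\psi})'$ --- essentially the remark following Definition~\ref{Triebel}. Granting all this, taking the infimum over representations \eqref{series f} yields $\|f\|_{\dot{\mathcal F}^{\alpha,q}_{p,{\rm D}}}\lesssim\|f\|_{\dot F^{\alpha,q}_{p,{\rm D}}}$ for $f\in L^2(\mathbb R^N,\omega)$, so the two norms are comparable there.

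With the norm comparison in hand, the two inclusions defining the completion go through routinely. For $\overline{\dot{\mathcal F}^{\alpha,q}_{p,{\rm D}}}\subseteq\dot F^{\alpha,q}_{p,{\rm D}}$: if $f\in(\dot{\mathcal F}^{-\alpha,q'}_{p',\psi})'$ is the distributional limit of some $\{f_n\}\subseteq L^2(\mathbb R^N,\omega)$ that is Cauchy in $\dot{\mathcal F}^{\alpha,q}_{p,{\rm D}}$, then by linearity of $f\mapsto\lambda(f)$ and the norm comparison $\{\lambda(f_n)\}$ is Cauchy in the Banach space $\dot f^{\alpha,q}_p$, say with limit $\lambda$; put $g:=S\lambda\in\dot F^{\alpha,q}_{p,{\rm D}}$, and use continuity of $S$ into $(\dot{\mathcal F}^{-\alpha,q'}_{p',\psi})'$ together with $f_n=S\lambda(f_n)$ to get $f_n\to g$, hence $f=g\in\dot F^{\alpha,q}_{p,{\rm D}}$. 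For $\dot F^{\alpha,q}_{p,{\rm D}}\subseteq\overline{\dot{\mathcal F}^{\alpha,q}_{p,{\rm D}}}$ with $1\le q<\infty$: fix a representation \eqref{series f} of $f$ with $\|\lambda\|_{\dot f^{\alpha,q}_p}$ nearly minimal and let $f_n$ be the partial sums over a finite exhaustion $E_n$ of the index set; each $f_n$ is a finite combination of the functions $\psi_Q(\cdot,x_Q)\in L^2(\mathbb R^N,\omega)$, so $f_n\in L^2(\mathbb R^N,\omega)$; dominated convergence gives $\|\lambda\mathbf 1_{E_n^c}\|_{\dot f^{\alpha,q}_p}\to0$, whence $f-f_n=S(\lambda\mathbf 1_{E_n^c})\to0$ in $(\dot{\mathcal F}^{-\alpha,q'}_{p',\psi})'$ and $\|f_n-f_m\|_{\dot{\mathcal F}^{\alpha,q}_{p,{\rm D}}}\lesssim\|\lambda\mathbf 1_{E_n\triangle E_m}\|_{\dot f^{\alpha,q}_p}\to0$.

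The endpoint $q=\infty$ is the one remaining delicate point: truncation of the coefficient sequence need not converge in $\dot f^{\alpha,\infty}_p$, so the approximating sequence must be produced differently --- by combining a truncation of \eqref{series f} with a regularisation (heat-semigroup smoothing $H_{1/n}$, or a smoothed cutoff in scale) arranged so that $f_n\in L^2(\mathbb R^N,\omega)$, and then verifying, via the kernel bounds of Theorem~A and the reproducing formula, that $\{f_n\}$ is Cauchy in $\dot{\mathcal F}^{\alpha,\infty}_{p,{\rm D}}$ and converges to $f$ in $(\dot{\mathcal F}^{-\alpha,q'}_{p',\psi})'$. Apart from this endpoint, everything reduces to the almost-orthogonality estimate of the second paragraph together with soft functional analysis built on Theorem~\ref{CRF}.
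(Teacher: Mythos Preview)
Your approach is essentially the same as the paper's: the central estimate $\|S\lambda\|_{\dot{\mathcal F}^{\alpha,q}_{p,{\rm D}}}\lesssim\|\lambda\|_{\dot f^{\alpha,q}_p}$ is precisely what the paper proves as the boundedness of $T_M$ in Theorem~\ref{thm 3.3} (via the almost-orthogonality of Lemma~\ref{aoe} and Fefferman--Stein), and both directions of the inclusion are then handled by truncation and passage to the limit just as you outline, with the paper packaging the limiting step through Proposition~\ref{CRFdis1}. One small correction: the discrete reproducing formula is not $f=S\lambda(f)$ with $\lambda(f)_Q=q_Qf(x_Q)$, but rather $f=S\lambda(h)$ with $h=T_M^{-1}f$ and $\|h\|_{\dot{\mathcal F}^{\alpha,q}_{p,{\rm D}}}\sim\|f\|_{\dot{\mathcal F}^{\alpha,q}_{p,{\rm D}}}$ (this is exactly Theorem~\ref{thm 3.3}); once you replace your ``canonical'' $\lambda(f)$ by $\lambda(h)$ the rest of your argument goes through unchanged, and your concern about $q=\infty$ is one the paper's proof does not explicitly resolve either.
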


We will establish the connection between the Lipschitz spaces $\Lambda^\beta$ and the commutator of the Riesz transform $R_{{\rm D},\ell}:=T_\ell (-\triangle_{\rm D})^{-{1\over2}}$ associated to the Dunkl Laplacian $\triangle_{\rm D}$. Comparing to the classical known results, the key step here is a finer decomposition of the underlying space via Dunkl metric to bypass the use of Fourier transform.
\begin{thm}\label{thm commutator Riesz}
Let $\ell\in\{1,\ldots,  N \}$.  Let $1<p<\infty$ and $0<\beta<1$. 
The function $b\in \Lambda^\beta$ if and only if the commutator $[b,R_{{\rm D},\ell}]$ is a bounded operator from $L^p(\mathbb {R}^ N, \omega )$ to $\dot F_{p,{\rm D}}^{\beta,\infty}$. Moreover, 
$$\|b\|_{\Lambda^\beta}\sim \|[b,R_{{\rm D},\ell}]\|_{L^p_\omega \to  \dot F_{p,{\rm D}}^{\beta,\infty}}.$$
\end{thm}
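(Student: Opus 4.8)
I would prove the two implications separately, in both cases working with the discrete Littlewood--Paley description of $\dot F^{\beta,\infty}_{p,{\rm D}}$ furnished by Theorem~\ref{CRF} and Proposition~\ref{thm 1.5}: modulo the identification $\dot F^{\beta,\infty}_{p,{\rm D}}=\overline{\dot{\mathcal F}^{\beta,\infty}_{p,{\rm D}}}$, one has $\|h\|_{\dot F^{\beta,\infty}_{p,{\rm D}}}\approx\big\|\sup_{k}\sum_{Q\in Q^k}{\mathfrak R}^{k\beta}|q_{{\mathfrak R}^k}(h)(x_Q)|\chi_Q\big\|_{L^p_\omega}$, and in particular $\|h\|_{\dot F^{\beta,\infty}_{p,{\rm D}}}\gtrsim{\mathfrak R}^{k\beta}\omega(Q)^{1/p}|q_{{\mathfrak R}^k}(h)(x_Q)|$ for every single cube $Q\in Q^k$. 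The kernels $q_{{\mathfrak R}^k},\psi_{{\mathfrak R}^k}$ are built from the Dunkl heat semigroup, so by Theorem~A and \cite{DH2} they are, with rapid decay, concentrated on the Euclidean ball $B(x_Q,{\mathfrak R}^{-k})$ together with its (small) reflected copies, whereas the Dunkl Riesz kernel $R_{{\rm D},\ell}(x,y)$ is a Calder\'on--Zygmund kernel only with respect to the metric $d$. Reconciling these two scales is the recurring technical issue, and is the ``finer decomposition via Dunkl and Euclidean metrics'' alluded to in the abstract.

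For $b\in\Lambda^\beta\Rightarrow$ boundedness, fix $g\in L^2(\omega)\cap L^p(\omega)$ and estimate $q_{{\mathfrak R}^k}([b,R_{{\rm D},\ell}]g)(x_Q)$ by inserting $[b,R_{{\rm D},\ell}]g(u)=\int R_{{\rm D},\ell}(u,v)(b(u)-b(v))g(v)\,d\omega(v)$ and splitting $b(u)-b(v)=(b(u)-b(x_Q))+(b(x_Q)-b(v))$. The first half recombines as $\int q_{{\mathfrak R}^k}(x_Q,u)(b(u)-b(x_Q))R_{{\rm D},\ell}g(u)\,d\omega(u)$, and since $|b(u)-b(x_Q)|\lesssim\|b\|_{\Lambda^\beta}\|u-x_Q\|^{\beta}$ while $q_{{\mathfrak R}^k}$ is an approximate identity at scale ${\mathfrak R}^{-k}$, it is $\lesssim\|b\|_{\Lambda^\beta}{\mathfrak R}^{-k\beta}\mathcal M(R_{{\rm D},\ell}g)(x_Q)$, with $\mathcal M$ the Hardy--Littlewood maximal operator for $\omega$. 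For the second half I would use $\int q_{{\mathfrak R}^k}(x_Q,\cdot)\,d\omega=0$ to subtract $R_{{\rm D},\ell}(x_Q,v)$, bound the difference by the Dunkl CZ regularity $|R_{{\rm D},\ell}(u,v)-R_{{\rm D},\ell}(x_Q,v)|\lesssim({\mathfrak R}^{-k}/d(x_Q,v))^{\theta}\omega(B(x_Q,d(x_Q,v)))^{-1}$, pair it with $|b(x_Q)-b(v)|\lesssim\|b\|_{\Lambda^\beta}\|x_Q-v\|^{\beta}$, and then split the $v$-integral both into Dunkl annuli $d(x_Q,v)\sim 2^j{\mathfrak R}^{-k}$ and, inside each, into the orbit pieces $\{v:\ d(x_Q,v)=\|v-\sigma(x_Q)\|\}$, $\sigma\in G$. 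On such a piece $\|x_Q-v\|\le\|x_Q-\sigma(x_Q)\|+2^{j+1}{\mathfrak R}^{-k}$, and the $G$-invariance of $\omega$ converts the integral into $\mathcal M g(\sigma(x_Q))$, up to the factor $2^{j(\beta-\theta)}$ and the extra decay in the reflected directions supplied by the sharpened bounds of \cite{DH2} (this is what tames $\|x_Q-\sigma(x_Q)\|^\beta$ when $\sigma\neq e$). Choosing $\theta>\beta$ makes the $j$-sum converge; taking $\sup_k$ and the $L^p_\omega$ norm leaves $\|\mathcal M(R_{{\rm D},\ell}g)\|_{L^p_\omega}+\sum_{\sigma\in G}\|\mathcal M g\circ\sigma\|_{L^p_\omega}\lesssim\|g\|_{L^p_\omega}$ by the $L^p(\omega)$-boundedness of $R_{{\rm D},\ell}$ and of $\mathcal M$, and a density argument finishes.

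For the converse, by Theorem~\ref{lip and lipd} it suffices to bound the Euclidean Campanato seminorm $\sup_B\ell(B)^{-\beta}\omega(B)^{-1}\int_B|b-b_B|\,d\omega$, which via Theorem~\ref{CRF} is comparable to $\sup_k{\mathfrak R}^{k\beta}\|q_{{\mathfrak R}^k}(b)\|_{L^\infty}$. Fix $B=B(x_0,r)$ with $r={\mathfrak R}^{-k}$ and a cube $Q\in Q^k$ centred near $x_0$. The core step is to build a companion ball $\widetilde B$ of radius $\sim r$ with ${\rm dist}(B,\widetilde B)\sim r$ such that $R_{{\rm D},\ell}(x,v)$ keeps a fixed sign and $|R_{{\rm D},\ell}(x,v)|\sim\omega(\widetilde B)^{-1}$ for $x\in B$, $v\in\widetilde B$: when $B$ sits deep inside a Weyl chamber one takes $\widetilde B=B(x_0+\lambda r e_\ell,r)$ for a suitable $\lambda\sim1$, since there $d=\|\cdot\|$ and, by Theorem~A and \cite{DH2}, $R_{{\rm D},\ell}(x,v)$ reduces to a weighted Euclidean Riesz kernel $\sim(x-v)_\ell\|x-v\|^{-\mathbf N-1}$; when $B$ abuts a wall a different choice of $\widetilde B$ — translating into the chamber along a direction away from the nearby walls, and controlling the reflection terms of $T_\ell$ — is required. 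With $\widetilde B$ fixed, I would test $[b,R_{{\rm D},\ell}]$ on $\chi_{\widetilde B}$ (and, if a local inversion of the kernel is needed and one wishes to avoid Fourier multipliers, on the functions $e^{iw_m\cdot v/r}\chi_{\widetilde B}$ obtained from the Fourier expansion on $\widetilde B$ of the smooth, non-vanishing function $v\mapsto R_{{\rm D},\ell}(x_0,v)^{-1}$). Writing $[b,R_{{\rm D},\ell}]\chi_{\widetilde B}(u)=(b(u)-b_{\widetilde B})A(u)+E(u)$ with $A=R_{{\rm D},\ell}\chi_{\widetilde B}$ of fixed sign, $|A|\sim1$, and $E$ smooth on $B$ with $|E|+r|\nabla E|\lesssim\|b\|_{\Lambda^\beta}\ell(\widetilde B)^\beta$, applying $q_{{\mathfrak R}^k}$, and using $\int q_{{\mathfrak R}^k}(x_Q,\cdot)\,d\omega=0$ (which discards constants and makes the smooth part $E$ contribute only a lower-order error), one gets $|A(x_Q)|\,|q_{{\mathfrak R}^k}(b)(x_Q)|\lesssim|q_{{\mathfrak R}^k}([b,R_{{\rm D},\ell}]\chi_{\widetilde B})(x_Q)|$ plus terms bounded by a small constant times $r^\beta\|b\|_{\Lambda^\beta}$. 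The single-term lower bound turns this into $|q_{{\mathfrak R}^k}(b)(x_Q)|\lesssim r^\beta\|[b,R_{{\rm D},\ell}]\|_{L^p_\omega\to\dot F^{\beta,\infty}_{p,{\rm D}}}$ plus those lower-order terms; since the latter involve only $\Lambda^\beta$-oscillation of $b$ on balls comparable to $B$, taking $\sup_{k}$ and $\sup_{x_Q}$ and absorbing them — after a harmless truncation of $b$, or by enlarging ${\rm dist}(B,\widetilde B)$ — yields $\|b\|_{\Lambda^\beta}\lesssim\|[b,R_{{\rm D},\ell}]\|_{L^p_\omega\to\dot F^{\beta,\infty}_{p,{\rm D}}}$.

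The hardest point, in both directions, is the clash between the two metrics: the Dunkl heat and Riesz kernels decay only at the rate dictated by $d$, while the oscillation of $b$ and the norm of $\Lambda^\beta$ are governed by $\|\cdot\|$, and the two differ exactly on the reflected regions $\{v:\ d(x,v)=\|v-\sigma(x)\|,\ \sigma\neq e\}$, where $\|x-v\|$ can be as large as $\|x-\sigma(x)\|$. Making all the relevant integrals converge there forces an orbit-by-orbit splitting, the use of the $G$-invariance of $\omega$ to return to ordinary maximal operators, and — crucially — the sharpened heat-kernel bounds of \cite{DH2}, whose additional rational factors in $\|x-\sigma(y)\|/\sqrt t$ provide the decay needed to defeat the Euclidean growth $\|x-\sigma(x)\|^\beta$. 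Within the necessity argument, the construction of a constant-sign, correctly-sized companion ball $\widetilde B$ when $B$ lies near a wall — where the reflection terms in $T_\ell$ are of the same size as the main term — is the single most delicate ingredient.
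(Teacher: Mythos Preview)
Your overall strategy is in the right spirit, but it departs from the paper's proof in both directions, and in the necessity direction there is a real gap.

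\textbf{Sufficiency.} The paper does not work with $q_{{\mathfrak R}^k}([b,R_{{\rm D},\ell}]g)(x_Q)$ at all. Instead it invokes the \emph{difference/oscillation} characterisation of $\dot F^{\beta,\infty}_{p,{\rm D}}$ (Proposition~\ref{defference} and Theorem~\ref{thm 4.2}) and, for each ball $B=B(x,{\mathfrak R}^k)$, decomposes the \emph{input} $f$ (not $b(u)-b(v)$) as $f=f_1+f_2+\sum_{j}f_{\sigma_j}$ with $f_1=f\chi_{5B}$, $f_2=f\chi_{(\mathcal O(5B))^c}$, and $f_{\sigma_j}=f\chi_{U_j}$ the orbit pieces. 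Each commutator piece is then split as $(b-b_B)R_{{\rm D},\ell}f_\bullet+R_{{\rm D},\ell}((b_B-b)f_\bullet)$, and the oscillations $\omega(B)^{-1}\ell(B)^{-\beta}\int_B|g_{\bullet\bullet}-{(g_{\bullet\bullet})_B}|$ are estimated by $M_t(R_{{\rm D},\ell}f)$, $M_{ts}(f)$, or $\sum_{\sigma}M(f)(\sigma(\cdot))$. The clash of metrics is handled not via the sharpened heat bounds of \cite{DH2} but via Lemma~\ref{lem 4.1} (comparing $b_{B(x,r)}$ with $b_{B(\sigma(x),r)}$) together with the elementary inequalities $\|x-z\|\ge\max(\|x-\sigma(x)\|/2,{\mathfrak R}^k)$ on $\sigma(\Gamma)$ and the pointwise size bound $|R_{{\rm D},\ell}(z,y)|\lesssim\frac{d(z,y)}{\|z-y\|}\omega(B(z,d(z,y)))^{-1}$ from \cite{HLLW}. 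Your scheme of splitting $b(u)-b(v)$ and subtracting $R_{{\rm D},\ell}(x_Q,v)$ via $\int q_t=0$ is different; it could be made to work, but you have not explained how to control the region where $\|u-x_Q\|\ge d(x_Q,v)/2$ (so the regularity estimate is unavailable), and your appeal to \cite{DH2} for extra decay in the reflected variable $v$ is misplaced, since after the $u$-integration only the Riesz kernel regularity is in play.

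\textbf{Necessity.} Here your argument has a genuine gap. The paper does \emph{not} use the single-cube lower bound for $\|\cdot\|_{\dot F^{\beta,\infty}_{p,{\rm D}}}$; it builds an explicit dual test function $a$ supported in $3B$, equal to $1$ on $B$, with mean zero and $\|a\|_{\dot F^{-\beta,1}_{p',{\rm D}}}\lesssim r^\beta\omega(B)^{1/p'}$, and pairs $[b,R_{{\rm D},\ell}]\chi_{E_i}$ against $a$. Crucially, it uses the \emph{median value} $m_b(\widetilde B)$ to split $B=B_1\cup B_2$ and $\widetilde B\supset E_1\cup E_2$ so that $b(x)-b(y)$ has constant sign on $B_i\times E_i$ and $|b(x)-m_b(\widetilde B)|\le|b(x)-b(y)|$ there; combined with the pointwise lower bound $|R_{{\rm D},\ell}(x,y)|\gtrsim\omega(\widetilde B)^{-1}$, this gives $\ell(B)^{-\beta}\omega(B)^{-1}\int_B|b-b_B|\lesssim\|[b,R_{{\rm D},\ell}]\|$ directly, with no reference to $\|b\|_{\Lambda^\beta}$ on the right. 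Your approach instead produces error terms $E$ with $|E|\lesssim\|b\|_{\Lambda^\beta}r^\beta$ of the \emph{same} order as the main term, and you propose to ``absorb'' them; but this is circular, since a priori $\|b\|_{\Lambda^\beta}$ is exactly what you are trying to bound (and may be infinite). Neither ``truncating $b$'' nor ``enlarging ${\rm dist}(B,\widetilde B)$'' produces a small constant in front of $\|b\|_{\Lambda^\beta}$ without further argument. Moreover, your passage from $q_{{\mathfrak R}^k}((b-b_{\widetilde B})A)$ to $A(x_Q)\,q_{{\mathfrak R}^k}(b)(x_Q)$ is not justified; the product rule generates another error of the same size. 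The median-value/duality device of the paper is precisely what eliminates this circularity.
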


Recall the fractional operator $\triangle_{\rm D}^{-\alpha/2}$ with $0<\alpha<{\bf N}$, which is defined by
$$ \triangle_{\rm D}^{-\alpha/2}f(x) = {1\over \Gamma(\alpha/2)} \int_0^\infty {e^{t\triangle_{\rm D}}}(f)(x) {dt\over t^{1-\alpha/2}}. $$
    Similarly we are also able to obtain the following estimates of the Lipschitz space $\Lambda^\beta$ via the commutator $[b,\triangle_{\rm D}^{-\alpha/2}](f)(x):= b(x)\triangle_{\rm D}^{-\alpha/2}(f)(x) - \triangle_{\rm D}^{-\alpha/2}(bf)(x) $.

\begin{thm}\label{main 2}
Let $1<p<q<\infty$, $0<\beta<1$ and  $\frac1p-\frac1q=\frac{\alpha}{\bf N}$. 
If $b\in \Lambda^\beta$, then the commutator $[b,\triangle_{\rm D}^{-\alpha/2}]$ is a bounded operator from 
$L^p(\mathbb {R}^N,\omega)$ to $\dot F_{q,{\rm D}}^{\beta,\infty}$. Moreover, $\|[b,\triangle_{\rm D}^{-\alpha/2}]\|_{L^p_\omega \to  \dot F_{q,{\rm D}}^{\beta,\infty}}\lesssim \|b\|_{\Lambda^\beta}$.

\end{thm}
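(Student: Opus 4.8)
The plan is to reduce Theorem~\ref{main 2} to a kernel estimate plus the reproducing-formula machinery underlying the Dunkl--Triebel--Lizorkin spaces. Recall that by Proposition~\ref{thm 1.5} the space $\dot F_{q,{\rm D}}^{\beta,\infty}$ coincides with $\overline{\dot{\mathcal F}^{\beta,\infty}_{q,{\rm D}}}$, so it suffices to control, for $b\in\Lambda^\beta$ and $f\in L^p(\mathbb R^N,\omega)$, the square-function-type quantity
\begin{align*}
\Big\|\sup_{k\in\mathbb Z}\sum_{Q\in Q^k}{\mathfrak R}^{k\beta}\big|q_{{\mathfrak R}^k}\big([b,\triangle_{\rm D}^{-\alpha/2}]f\big)(x_Q)\big|\chi_Q\Big\|_{L^q_\omega}\lesssim\|b\|_{\Lambda^\beta}\,\|f\|_{L^p_\omega}.
\end{align*}
First I would fix $k$ and a cube $Q\in Q^k$ with centre $x_Q$, and write out $q_{{\mathfrak R}^k}[b,\triangle_{\rm D}^{-\alpha/2}]f(x_Q)$ as an integral against the composed kernel $K_k(x_Q,y):=\int q_{{\mathfrak R}^k}(x_Q,z)\big(b(z)-b(y)\big)k_{\alpha}(z,y)\,d\omega(z)$, where $k_\alpha$ is the kernel of $\triangle_{\rm D}^{-\alpha/2}$ obtained from the heat-semigroup representation; the commutator structure lets us insert the factor $b(z)-b(y)$, which is the source of the gain.

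The heart of the argument is the pointwise bound: there exist $\varepsilon>0$ and $c>0$ so that
\begin{align*}
|K_k(x_Q,y)|\lesssim\|b\|_{\Lambda^\beta}\,{\mathfrak R}^{-k\beta}\,\frac{1}{\omega(B(x_Q,{\mathfrak R}^{-k}+d(x_Q,y)))}\Big(\frac{{\mathfrak R}^{-k}}{{\mathfrak R}^{-k}+d(x_Q,y)}\Big)^{\varepsilon}\big({\mathfrak R}^{-k}+d(x_Q,y)\big)^{\alpha}.
\end{align*}
To get this I would split the $z$-integral into the region $\|z-x_Q\|\le \tfrac12({\mathfrak R}^{-k}+d(x_Q,y))$ and its complement. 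On the near region I use $|b(z)-b(y)|\lesssim\|b\|_{\Lambda^\beta}\|z-y\|^\beta\lesssim\|b\|_{\Lambda^\beta}({\mathfrak R}^{-k}+d(x_Q,y))^\beta$ together with the size estimate and cancellation of $q_{{\mathfrak R}^k}$ and the (known) size/regularity bounds for $k_\alpha$ derived from Theorem~A and the improved Dunkl heat kernel bounds of \cite{DH2}; the fractional integration in $t$ produces the factor $({\mathfrak R}^{-k}+d(x_Q,y))^{\alpha}$ via the usual splitting $\int_0^{\rho^2}+\int_{\rho^2}^\infty$ with $\rho={\mathfrak R}^{-k}+d(x_Q,y)$. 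On the far region one exploits the rapid decay (Gaussian in $d$) of $q_{{\mathfrak R}^k}(x_Q,z)$ to absorb the polynomial growth of $\|z-y\|^\beta$ and of the $k_\alpha$ factor, again using the doubling property \eqref{rd1.1}. The crucial subtlety here — and this is where the ``finer decomposition via Dunkl metric and Euclidean metric'' advertised in the abstract enters — is that $|b(z)-b(y)|$ is controlled by the \emph{Euclidean} distance $\|z-y\|$, not the Dunkl distance $d(z,y)$, while the kernel decays in $d$; one therefore has to sum over the $G$-orbit, comparing $\|z-\sigma(y)\|$ with $d(z,y)$ for each $\sigma\in G$ and using $|b(z)-b(y)|\le|b(z)-b(\sigma y)|+|b(\sigma y)-b(y)|$ with the $\Lambda^\beta$ seminorm applied to the at-most-$|G|$ many Euclidean increments. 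This is the step I expect to be the main obstacle, since it requires carefully bookkeeping the reflection-group sum so that the orbit multiplicity does not destroy the decay.

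Once the kernel bound is in hand, the rest is routine: the bound exhibits $2^{k\beta}|K_k(x_Q,\cdot)|$ (up to the ${\mathfrak R}$-adic normalization) as $\|b\|_{\Lambda^\beta}$ times a normalized fractional-integral-type kernel at scale ${\mathfrak R}^{-k}$ in the Dunkl metric measure space, uniformly in $k$ and $Q$. Summing $\sum_{Q\in Q^k}\chi_Q$ and taking the supremum in $k$, a standard argument (domination by a fractional maximal operator $M_\alpha$ associated with $d\omega$, or directly by the Hardy--Littlewood--Sobolev inequality on the space of homogeneous type $(\mathbb R^N,d,\omega)$, valid because $\tfrac1p-\tfrac1q=\tfrac{\alpha}{\bf N}$ and \eqref{rd1.1} holds) gives
\begin{align*}
\Big\|\sup_{k\in\mathbb Z}\sum_{Q\in Q^k}{\mathfrak R}^{k\beta}\big|q_{{\mathfrak R}^k}[b,\triangle_{\rm D}^{-\alpha/2}]f(x_Q)\big|\chi_Q\Big\|_{L^q_\omega}\lesssim\|b\|_{\Lambda^\beta}\,\|M_\alpha f\|_{L^q_\omega}\lesssim\|b\|_{\Lambda^\beta}\,\|f\|_{L^p_\omega},
\end{align*}
which is the asserted estimate. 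Finally, to legitimately conclude membership in $\dot F_{q,{\rm D}}^{\beta,\infty}=\overline{\dot{\mathcal F}^{\beta,\infty}_{q,{\rm D}}}$ rather than merely a norm bound on a smooth dense subclass, I would run the above on $f$ replaced by a truncation/mollification $f_n\in L^2\cap L^p$, note $[b,\triangle_{\rm D}^{-\alpha/2}]f_n\in L^2(\mathbb R^N,\omega)$ so it lies in $\dot{\mathcal F}^{\beta,\infty}_{q,{\rm D}}$, and use the uniform bound to pass to the limit in $(\dot{\mathcal F}^{-\beta,1}_{q',\psi})'$ exactly as in Proposition~\ref{thm 1.5}.
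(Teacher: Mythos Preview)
Your overall strategy---a pointwise bound on the composed kernel $K_k(x_Q,y)$ followed by a fractional-maximal/HLS estimate---is genuinely different from the paper's route, which never attempts such a kernel bound. But the sketch contains a real gap at exactly the step you flag as ``the main obstacle''.

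\textbf{The gap.} On the near region $\|z-x_Q\|\le\tfrac12({\mathfrak R}^{-k}+d(x_Q,y))$ you write $|b(z)-b(y)|\lesssim\|b\|_{\Lambda^\beta}\|z-y\|^\beta\lesssim\|b\|_{\Lambda^\beta}({\mathfrak R}^{-k}+d(x_Q,y))^\beta$. The second inequality is false: one only has $\|z-y\|\le\|z-x_Q\|+\|x_Q-y\|$, and $\|x_Q-y\|$ can be arbitrarily large compared to $d(x_Q,y)$ (take $y$ close to $\sigma(x_Q)$ for some $\sigma\neq{\rm id}$). Your proposed fix---writing $|b(z)-b(y)|\le|b(z)-b(\sigma y)|+|b(\sigma y)-b(y)|$ with $\|z-\sigma y\|=d(z,y)$---does not help, because $|b(\sigma y)-b(y)|\lesssim\|b\|_{\Lambda^\beta}\|\sigma y-y\|^\beta$ is not bounded by $({\mathfrak R}^{-k}+d(x_Q,y))^\beta$ either. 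This is precisely why $\Lambda^\beta_d\subsetneq\Lambda^\beta$ (Theorem~\ref{lip comp}): a general $b\in\Lambda^\beta$ carries no orbit symmetry, so the uncontrolled increment $|b(\sigma y)-b(y)|$ has to be absorbed elsewhere, and in your scheme there is nothing to absorb it. Consequently the claimed pointwise bound on $K_k(x_Q,y)$, with its clean factor ${\mathfrak R}^{-k\beta}$ and decay only in $d(x_Q,y)$, is not established and is likely false as stated.

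\textbf{How the paper proceeds instead.} The paper bypasses any pointwise bound on a composed kernel. It uses the \emph{difference characterization} of $\dot F^{\beta,\infty}_{q,{\rm D}}$ (Proposition~\ref{defference}, Theorem~\ref{thm 4.2}) so that the norm is an $L^q$ norm of $\sup_k \ell(B)^{-\beta}\omega(B)^{-1}\int_B|[b,\triangle_{\rm D}^{-\alpha/2}]f-{\rm const}|$. For each ball $B=B(x,{\mathfrak R}^k)$ it decomposes $f=f_1+f_2+\sum_j f_{\sigma_j}$ with $f_1=f\chi_{5B}$, $f_2=f\chi_{(\mathcal O(5B))^c}$, and $f_{\sigma_j}$ supported on annular pieces near $\sigma_j(x)$; this is exactly how the problematic orbit increments are isolated and controlled separately, using Lemma~\ref{lem 4.1} (especially \eqref{eq 4.3}) together with the kernel estimates of Lemma~\ref{lem 4.5}. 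The passage from an $L^q$ norm on the left to $\|f\|_{L^p_\omega}$ on the right is done via Lemma~\ref{pq lemma} (a Hedberg-type reduction) combined with Theorem~\ref{lpq bounded}. Finally, the proof first works under the auxiliary restriction $\alpha+\beta<1$ (needed for convergence of certain dyadic sums) and then removes it by the semigroup identity $\triangle_{\rm D}^{-\alpha/2}\triangle_{\rm D}^{-\beta/2}=\triangle_{\rm D}^{-(\alpha+\beta)/2}$ and the embedding argument from \cite{P}; your outline does not address the range of $\alpha$ at all.
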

Finally, when the parameter $\beta=0$ we can demonstrate that the  BMO space associated with the Euclidean metric  and Dunkl measure $\omega$ is the suitable replacement for $\Lambda^0$. To be more precise, we recall that ${\rm BMO}_{\rm Dunkl}(\mathbb{R}^N,\omega) $ is the set of all 
$b\in L^1_{loc}(\mathbb R^N,\omega)$
such that 
$$ \|b\|_{{\rm BMO}_{{\rm Dunkl}} } :=\sup_{B} {1\over \omega(B) }\int_{B}|b(x)-b_B| d\omega(x)<\infty,
$$
where the sup is taken over all balls $B$ in $\mathbb{R}^N$. 
\begin{thm}\label{main 1}
Suppose $b\in L^1_{loc}(\mathbb R^N,\omega)$, $0<\alpha<{\bf N}$, $1<p<{{\bf N}\over \alpha}$ and ${1\over q} = {1\over p} - {\alpha\over {\bf N}}$.
If $b\in {\rm BMO}_{{\rm Dunkl}} (\mathbb R^N,\omega)$, then $[b,\triangle_{\rm D}^{-\alpha/2}]$ is a bounded map from 
$L^p(\mathbb{R}^N,\omega)$ to $L^q(\mathbb{R}^N,\omega)$ with  $\|[b,\triangle_{\rm D}^{-\alpha/2}]\|_{L^p_\omega \to L^q_\omega}\lesssim \|b\|_{{\rm BMO}_{{\rm Dunkl}} }$.
\end{thm}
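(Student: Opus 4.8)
The plan is to follow the classical Chanillo-type scheme for commutators of fractional integrals with $\mathrm{BMO}$ symbols, but to reorganize it by ``unfolding'' the Dunkl kernel through the reflection group $G$, so that the Euclidean-ball definition of ${\rm BMO}_{{\rm Dunkl}}$ becomes the natural hypothesis. First I would record the kernel of $\triangle_{\rm D}^{-\alpha/2}$, namely
$$K_\alpha(x,y)=\frac1{\Gamma(\alpha/2)}\int_0^\infty h_t(x,y)\,t^{\alpha/2-1}\,dt ,$$
and, integrating the estimates of Theorem A in $t$ (splitting the $t$-integral at $t\sim\|y-y'\|^2$ for the regularity part), obtain the size bound $|K_\alpha(x,y)|\lesssim d(x,y)^\alpha\,\omega(B(x,d(x,y)))^{-1}$ together with the H\"older estimate $|K_\alpha(x,y)-K_\alpha(x,y')|\lesssim \big(\|y-y'\|/d(x,y)\big)\,d(x,y)^\alpha\,\omega(B(x,d(x,y)))^{-1}$ for $\|y-y'\|\le d(x,y)/2$, and the version symmetric in $x$. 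Since moreover $\omega(B(x,\rho))\gtrsim\rho^{\bf N}$ (see \cite{ADH}), these say that $\triangle_{\rm D}^{-\alpha/2}$ is a fractional integral operator of order $\alpha/{\bf N}$ (relative to $\omega$) on the space of homogeneous type $(\mathbb R^N,d,\omega)$; in particular $\triangle_{\rm D}^{-\alpha/2}\colon L^p(\mathbb R^N,\omega)\to L^q(\mathbb R^N,\omega)$ is bounded for $\tfrac1q=\tfrac1p-\tfrac\alpha{\bf N}$, with the corresponding weak endpoint. I would take this $L^p\to L^q$ mapping as a known auxiliary fact (it also follows from the ultracontractivity $\|e^{t\triangle_{\rm D}}\|_{L^1_\omega\to L^\infty_\omega}\lesssim t^{-{\bf N}/2}$ implied by Theorem A). By a routine truncation it is enough to prove the inequality for $f$ bounded with compact support and then pass to the limit by density, so all quantities below are a priori finite.

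The decisive new ingredient is a \emph{reflection unfolding}. Write $\mathbb R^N=\bigcup_{\sigma\in G}C_\sigma$ as the essentially disjoint union of the closed Weyl chambers $C_\sigma=\sigma(C_+)$. If $x\in C_\sigma$ and $y\in C_\tau$ then, since $\sigma\tau^{-1}(y)$ lies in $C_\sigma$ together with $x$ and is the $G$-orbit representative of $y$ nearest $x$, one has $d(x,y)=\|x-\sigma\tau^{-1}(y)\|$ (a consequence of the remark, recalled above, that $\|x-y\|=d(x,y)$ exactly when $x,y$ share a closed Weyl chamber). Splitting the defining integral of $[b,\triangle_{\rm D}^{-\alpha/2}]f(x)$ over the chambers $C_\tau$ in which $y$ lies, performing in each the $\omega$-preserving change of variables $y=\tau\sigma^{-1}(z)$ (legitimate because $d\omega$ is $G$-invariant, which in turn follows from the $G$-invariance of $\kappa$), and using $K_\alpha(gx,gy)=K_\alpha(x,y)$ for $g\in G$ (which follows from the $G$-invariance of the heat semigroup), one rewrites $[b,\triangle_{\rm D}^{-\alpha/2}]f$ on $C_\sigma$ as a finite sum of expressions $\int_{C_\sigma}\big(b(x)-(b\circ g)(z)\big)\,K_\alpha(x,gz)\,(f\circ g)(z)\,d\omega(z)$ with $g=\tau\sigma^{-1}$. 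Crucially, for $x,z\in C_\sigma$ one has $d(x,gz)=\|x-z\|$ (the representative of $gz$ nearest $x$ is $z$), so $K_\alpha(x,gz)$ is dominated by the \emph{Euclidean} fractional kernel $\|x-z\|^\alpha\,\omega(B(x,\|x-z\|))^{-1}$ with Euclidean H\"older regularity in $x$. Writing $b(x)-(b\circ g)(z)=\big(b(x)-(b\circ g)(x)\big)+\big((b\circ g)(x)-(b\circ g)(z)\big)$ splits each expression into (i) a Euclidean fractional commutator of the symbol $b\circ g$ against $f\circ g$, on the space of homogeneous type $(C_\sigma,\|\cdot\|,\omega)$, and (ii) a ``cross term'' $(b-b\circ g)(x)\cdot S_g(f\circ g)(x)$, where $S_g$ is the corresponding (Euclidean, order $\alpha/{\bf N}$) fractional integral. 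Here $b\circ g$ and $b-b\circ g$ both lie in ${\rm BMO}_{{\rm Dunkl}}$ with norm $\lesssim\|b\|_{{\rm BMO}_{{\rm Dunkl}}}$, because $g$ is a Euclidean isometry preserving $\omega$ and ${\rm BMO}$ is a vector space.

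For the commutators in (i) one invokes the classical Chanillo-type theorem for commutators of fractional integrals with $\mathrm{BMO}$ symbols on spaces of homogeneous type, applied to $(C_\sigma,\|\cdot\|,\omega)$ with $\tfrac1q=\tfrac1p-\tfrac\alpha{\bf N}$, $1<p<{\bf N}/\alpha$: this yields the bound $\lesssim\|b\|_{{\rm BMO}_{{\rm Dunkl}}}\,\|f\circ g\|_{L^p_\omega}$, the needed $L^p\to L^q$ boundedness of $S_g$ being contained in the auxiliary fact. The cross terms in (ii) are handled through the sharp maximal function exactly as in Chanillo's original argument: for a Euclidean ball $B$ one estimates the $L^\delta$-oscillation of $(b-b\circ g)\cdot S_g(f\circ g)$ over $B$ by H\"older's inequality together with John--Nirenberg on $B$ and the kernel estimates for $S_g$ --- all available precisely because the relevant averages are taken over Euclidean balls, for which ${\rm BMO}_{{\rm Dunkl}}$ is defined --- and then applies the Fefferman--Stein inequality $\|\cdot\|_{L^q_\omega}\lesssim\|M^\#_\delta(\cdot)\|_{L^q_\omega}$, the boundedness of the Hardy--Littlewood maximal function, and again the auxiliary $L^p\to L^q$ bound, to recover $\lesssim\|b\|_{{\rm BMO}_{{\rm Dunkl}}}\|f\|_{L^p_\omega}$. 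Since $\|h\circ g\|_{L^p_\omega}=\|h\|_{L^p_\omega}$ for $g\in G$, summing over the finitely many $g$ and over the chambers $C_\sigma$ costs only a constant depending on $|G|$, and gives $\|[b,\triangle_{\rm D}^{-\alpha/2}]f\|_{L^q_\omega}\lesssim\|b\|_{{\rm BMO}_{{\rm Dunkl}}}\|f\|_{L^p_\omega}$.

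I expect the hard part to be the bookkeeping near the chamber walls. When the ball $B$ entering the sharp maximal function straddles (or lies very close to) a wall, the identity $d(x,gz)=\|x-z\|$ and the identification of $K_\alpha(x,gz)$ with a Euclidean fractional kernel break down and the reflected copies $\sigma(B)$ overlap; one must then interpolate between the ``$B$ deep inside a chamber'' regime, where the unfolding applies verbatim, and the ``$B$ near the walls'' regime, where $d\approx\|\cdot\|$ on $2B$ up to the bounded factor $|G|$ and the classical argument applies directly on $(\mathbb R^N,\|\cdot\|,\omega)$. A second delicate point, internal to the cross terms (ii), is that the coefficients $\langle b-b\circ g\rangle_B$ may grow logarithmically as $B$ shrinks; controlling their contribution to the $L^q$ norm uses the extra decay carried by the fractional integral $S_g$ (available since $p>1$) together with the $\bf N$-dimensional lower bound for $\omega$. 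Carrying these two threads through, and checking that the unfolded operators $S_g$ genuinely satisfy the fractional Calder\'on--Zygmund conditions of order $\alpha/{\bf N}$ (size from $\omega(B(x,\rho))\gtrsim\rho^{\bf N}$, regularity from Theorem A(b)), is where the ``finer decomposition via Dunkl and Euclidean metrics'' of the abstract does its work.
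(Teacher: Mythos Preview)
Your global ``reflection unfolding'' is a different route from the paper's, and the cross terms (ii) are where it breaks down. After your splitting $b(x)-(b\circ g)(z)=\big(b(x)-(b\circ g)(x)\big)+\big((b\circ g)(x)-(b\circ g)(z)\big)$, term (ii) is the bare product $(b-b\circ g)(x)\cdot S_g(f\circ g)(x)$. This is \emph{not} a commutator: the cancellation that makes the Chanillo sharp-function argument work has been destroyed. You are left trying to bound a product of a ${\rm BMO}$ function with an $L^q$ function, and that product need not lie in $L^q$ with norm controlled by $\|b-b\circ g\|_{{\rm BMO}}\|S_g(f\circ g)\|_{L^q}$; the point of the commutator is precisely that the two unbounded pieces $b\,I_\alpha f$ and $I_\alpha(bf)$ cancel, and your decomposition undoes that cancellation term-by-term in $g$. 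Your proposed fix (``extra decay carried by $S_g$'') does not recover it: the oscillation $\frac1{\omega(B)}\int_B|(b-b\circ g)\,S_g h-c|\,d\omega$ contains the term $\phi_B\cdot(S_gh-(S_gh)_B)$ with $\phi=b-b\circ g$, and $|\phi_B|$ is unbounded in $B$ (it grows like $\log$), so the sharp function is not dominated by anything of the form $\|b\|_{{\rm BMO}}\cdot(\text{maximal or fractional-maximal of }f)$. The boundary-of-chamber issues you flag are real too, but secondary to this.

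The paper avoids this by \emph{not} decomposing $\mathbb R^N$ globally into chambers; instead, for each Euclidean ball $B$ entering the sharp maximal function it decomposes the input as $f=f_1+f_2+\sum_{j=1}^{|G|-1}f_{\sigma_j}$ with $f_1=f\chi_{5B}$, $f_2=f\chi_{(\mathcal O(5B))^c}$ and $f_{\sigma_j}$ supported in the annular pieces $U_j$ of $\mathcal O(5B)\setminus 5B$ near each reflected centre $\sigma_j(x)$. One then writes $[b,\triangle_{\rm D}^{-\alpha/2}]f_*=(b-b_B)\triangle_{\rm D}^{-\alpha/2}f_*-\triangle_{\rm D}^{-\alpha/2}((b-b_B)f_*)$ for each piece; the subtraction of the \emph{ball-average} $b_B$ (rather than $b\circ g$) keeps the commutator structure intact. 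The local pieces use H\"older, John--Nirenberg on Euclidean balls (exactly what ${\rm BMO}_{{\rm Dunkl}}$ gives), and the $L^p\!\to\!L^q$ bound for $\triangle_{\rm D}^{-\alpha/2}$; the far piece $f_2$ uses the kernel regularity of Lemma~\ref{lem 4.5} and a chamber-by-chamber estimate in the \emph{integration variable}; and the reflected pieces $f_{\sigma_j}$ use the ${\rm BMO}$ analogue of Lemma~\ref{lem 4.1} (from \cite{DH3}) to compare $b_B$ with $b_{B(\sigma_j(x),r)}$. This ball-dependent decomposition is exactly the ``finer decomposition via Dunkl and Euclidean metrics'' alluded to in the abstract, and it is what you should adopt in place of the global unfolding.
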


Here we point out that it is still an open question whether the reverse direction of Theorems \ref {main 2} or \ref{main 1} is true.

Throughout the paper we use the notation $X\lesssim Y$ to denote that there is an absolute constant $C$ so that $X\leq CY$.  If we write $X\sim Y$, then we mean that $X\lesssim Y$ and $Y\lesssim X$. 


\section{The characterization of the Lipschitz spaces}

We first show the characterization of the Lipschitz spaces.

\begin{proof}[The proof of Theorem \ref{lip and lipd}]
{Let $X=\mathbb R^N/ G$ be the space of orbits equipped with the metric $d(\mathcal{O}(x),\mathcal{O}(y))=d(x,y)$
and the measure ${\bf m}(A)=\omega\big(\cup_{\mathcal{O}(x)\in A} \mathcal{O}(x)\big)$. So $(X, d, {\bf m})$ is the space
of homogeneous type of in the sense of Coifman--Weiss (see \cite[page 2403]{ADH}). }
Hence the proofs of  the characterization of the Lipschitz spaces $\Lambda^\beta$ and  $\Lambda^\beta_d$ are similar.
It suffices to prove \eqref{e4.0},  \eqref{e4.1} and \eqref{e4.2} are equivalent.
We first show that \eqref{e4.0} implies \eqref{e4.2}.
For every $B\subseteq \mathbb R^ N$, we have that 
\begin{align*}
&\frac 1{\ell(B)^{\beta}}\bigg(\frac1{\omega(B)}\int_B |f(x)-f_B|^q d\omega(x)\bigg)^{\frac1q} \\
&\leq \frac 1{\ell(B)^{\beta}}\Bigg(\frac1{\omega(B)}\int_B\bigg(\frac1{\omega(B)}\int_B |f(x)-f(y)| d\omega(y)\bigg)^q d\omega(x)\Bigg)^{\frac1q} \\
&\leq \Bigg(\frac1{\omega(B)}\int_B\bigg(\frac1{\omega(B)}\int_B \sup_{x\ne y} \frac{|f(x)-f(y)|}{\|x-y\|^\beta} d\omega(y)\bigg)^q d\omega(x)\Bigg)^{\frac1q} \\
&\leq \sup_{x\ne y} \frac{|f(x)-f(y)|}{\|x-y\|^\beta}.
\end{align*}
Hence, \eqref{e4.0} implies \eqref{e4.2}.

Next, by H\"older's inequality, we have 
$$\frac 1{\ell(B)^{\beta}}\frac1{\omega(B)}\int_B |f(x)-f_B|d\omega(x) 
\le \frac 1{\ell(B)^{\beta}}\bigg(\frac1{\omega(B)}\int_B |f(x)-f_B|^q d\omega(x)\bigg)^{\frac1q}.$$
Hence, we get that  \eqref{e4.2} implies \eqref{e4.1}.

To show  \eqref{e4.1} implies  \eqref{e4.0}, we need the following result.
Suppose $S_k$ are the approximation operator to the identity in the space of homogeneous type $(\mathbb R^N, \|\cdot\|, d\omega)$. Set $E_k=S_k-S_{k-1}$. The kernel of $E_k$ satisfies 
$$E_kf(x)=\int_{\Bbb R^N}E_k(x,y)f(y)d\omega(y)$$ with
 $E_k(x,y)$ for all $x,y\in \mathbb{R}^{N}$ satisfying the
following conditions, see \cite{HHLLT}:
\begin{itemize}
 \item[(i)] $\displaystyle|E_k(x,y)|\leqslant C\frac1{V_k(x)+V_k(y)+V(x,y)}\frac{{\mathfrak R}^{-k}}{{\mathfrak R}^{-k}+\|x-y\|};$
        \item[(ii)] $\displaystyle|E_k(x,y)-E_k(x',y)|\leqslant C\frac{\|x-x'\|)}{{\mathfrak R}^{-k}+\|x-x'\|}\frac1{V_k(x)+V_k(y)+V(x,y)}\frac{{\mathfrak R}^{-k}}{{\mathfrak R}^{-k}+\|x-y\|}$,
        \item[]for $\|x-x'\|\leqslant ({\mathfrak R}^{-k}+\|x-y\|)/2,$ similarly for $|E_k(x,y)-E_k(x,y')|$;
        \item[(iii)] $\displaystyle \int_{\Bbb R^N} E_k(x,y)d\omega(x)=0\qquad \text{for all}\ y\in \Bbb R^N;$
        \item[(iv)] $\displaystyle \int_{\Bbb R^N} E_k(x,y)d\omega(y)=0\qquad \text{for all}\ x\in \Bbb R^N.$
\end{itemize}

By the method in \cite[Lemma 4.3]{DeH} or \cite[Theorem 8.1]{Lin}, we have the fact  that
$\|f\|_{\Lambda^\beta}$ via \eqref{e4.0} is equivalent to the Besov norm $D\binfty$
in the space of homogeneous type $(\mathbb R^N, \|\cdot\|, d\omega)$:
$$\|f\|_{D\binfty}:=\sup\limits_{\substack{k\in\mathbb Z, \ x \in \mathbb R^N}}{\mathfrak R}^{-k\beta}|E_kf(x)|<\infty.$$ 
 Now for any $k\in \mathbb Z, \ x \in \mathbb R^N$, we have
 \begin{align*}
 &{\mathfrak R}^{-k\beta}|E_kf(x)| \\
 &=  {\mathfrak R}^{-k\beta}\bigg|\int_{\Bbb R^N}E_k(x,y)(f(y) -f_{B(x,{\mathfrak R}^k)})d\omega(y)\bigg|\\
 &\leq C {\mathfrak R}^{-k\beta} \int_{\Bbb R^N} {1\over \omega(B(x,\|x-y\|))}\frac{{\mathfrak R}^k}{{\mathfrak R}^k+\|x-y\|}|f(y) -f_{B(x,{\mathfrak R}^k)}|d\omega(y)\\
 &\leq C {\mathfrak R}^{-k\beta} \sum_{j=1}^\infty \int_{B(x,2^j{\mathfrak R}^k)\backslash  B(x,2^{j-1}{\mathfrak R}^k) } {1\over \omega(B(x, \|x-y\|))}\frac{{\mathfrak R}^k}{{\mathfrak R}^k+\|x-y\|}|f(y) -f_{B(x,{\mathfrak R}^k)}|d\omega(y)\\
 &\quad+C {\mathfrak R}^{-k\beta}  \int_{B(x,{\mathfrak R}^k) } {1\over \omega(B(x, \|x-y\|))}\frac{{\mathfrak R}^k}{{\mathfrak R}^k+\|x-y\|}|f(y) -f_{B(x,{\mathfrak R}^k)}|d\omega(y)\\
 &\leq C  {\mathfrak R}^{-k\beta} \sum_{j=1}^\infty \int_{B(x,2^j{\mathfrak R}^k) } {1\over  \omega(B(x, 2^{j-1}{\mathfrak R}^k))}\frac{{\mathfrak R}^k}{{\mathfrak R}^k+2^{j-1}{\mathfrak R}^k}|f(y) -f_{B(x,{\mathfrak R}^k)}|d\omega(y)\\
 &\quad+C {\mathfrak R}^{-k\beta} {1\over \omega((B(x,{\mathfrak R}^k)))}  \int_{B(x,{\mathfrak R}^k) } |f(y) -f_{B(x,{\mathfrak R}^k)}|d\omega(y)\\
 &\leq C\sup_{B\subseteq \mathbb R^ N }\frac 1{\ell(B)^{\beta}\omega(B)}\int_{B} |f(x)-f_B| d\omega(x).
 \end{align*}
Hence, \eqref{e4.0},  \eqref{e4.1} and \eqref{e4.2} are equivalent.
The proof is completed.
\end{proof}

We next show that the Lipschitz space associated to the metric $d$ is a proper subset of  the Lipschitz space associated the Euclidean norm.

\begin{proof}[Proof of Theorem \ref{lip comp}]
Suppose $f\in \Lambda^\beta_d$. Then $d(x,y)\le \|x-y\|$ gives
$$\sup_{d(x,y)\ne 0}\frac{|f(x)-f(y)|}{\|x-y\|^\beta} \le \sup_{d(x,y)\ne 0}\frac{|f(x)-f(y)|}{d(x,y)^\beta}<\infty.$$
Note that $d(x,y)=0$ implies that $y=\sigma(x)$ for some $\sigma\in G$.
It is clear that
$$\sup_{x\ne y}\frac{|f(x)-f(y)|}{\|x-y\|^\beta}\le \sup_{d(x,y)\ne 0}\frac{|f(x)-f(y)|}{\|x-y\|^\beta}
            + \sup_{\{y=\sigma(x) : \sigma\in G\} \atop y\ne x}\frac{|f(x)-f(y)|}{\|x-y\|^\beta}<\infty$$
since $G$ is finite group. Hence $f\in \Lambda^\beta$. Thus, we first obtain 
$$ \Lambda^\beta_d\subset \Lambda^\beta. $$

Next, we consider an example in the real line $\mathbb R$ to show that the containment is proper. 
Take the function $f\in C^1(\mathbb R)$ given by
\[
    f(x)=\left\{
                \begin{array}{ll}
                  -1,\quad x\leq -{\pi\over2};\\[5pt]
                   \sin x, \ \ \ -{\pi\over2}<x<{\pi\over2};\\[5pt]
                  1,\quad x\geq{\pi\over2}.\\
                \end{array}
              \right.
  \]
Then it is clear that this $f$ belongs to $\Lambda^\beta$ for every $\beta\in(0,1)$.   However, we take $x>\pi$ and we take $y=-x-\delta$ with $\delta$ a small positive number, for example, $\delta<100^{-1}$. Then we have 
$$d(x,y) =\delta.$$
It is clear that 
$$|f(x)-f(y)|= 2> \delta^\beta = d(x,y)^\beta$$
for every $\beta\in(0,1)$.
Hence, 
$$\sup_{d(x,y)\ne 0}\frac{|f(x)-f(y)|}{d(x,y)^\beta}=\infty,$$
which shows that
this $f$ does not belong to $\Lambda^\beta_d$ and so in general we have $\Lambda^\beta_d\subsetneq \Lambda^\beta$.
The proof of Theorem \ref{lip comp} is complete.
\end{proof}


\section{Triebel--Lizorkin spaces and the Calder\'on reproducing formula in $\dot{\mathcal F}^{\alpha,q}_{p,{\rm D}}$}

We begin by collecting some notation in the Dunkl setting. Let $B(x,r):=\{y\in \mathbb{R}^{N}:\|x-y\|<r\}$ denote the ball with
center  $ x\in\mathbb{R}^{N}$  and radius $r>0$. 
Clearly,
$$\omega(B(tx, tr)) = t^\textbf{N}\omega(B(x, r))$$
and the following change of variables formula holds
$\int_{\mathbb{R}^{N}}f(x)d\omega(x)=\int_{\mathbb{R}^{N}}\frac{1}{t^\textbf{N}}f(\frac{x}{t})d\omega(x)$
for $f \in L^1(\mathbb{R}^N,\omega)$, $t>0$.

Observe that for $x\in \mathbb{R}^N$ and $r>0,$
$$\omega(B(x,r))\sim r^N \prod_{\alpha\in R}\big(|\langle\alpha,x\rangle|+r\big)^{\kappa(\alpha)}.$$

\subsection{The Calder\'on Reproducing Formula in $L^2_\omega$}

The Poisson semigroup is given by
$$P_tf(x)=\pi^{-\frac{1}{2}}\int^\infty_0 e^{-u}\exp\Big(\frac{t^2}{4u}\triangle_D\Big)f(x)\frac{du}{u^{\frac{1}{2}}}$$
and $u(x, t)=P_tf(x),$ so-called the Dunkl Poisson integral, solves
the boundary value problem 
$$\begin{cases}
       & (\partial_t^2 + \triangle_D)u(x,t)=0,\\[3pt]
       & u(x,0)=f(x)
    \end{cases}
$$
in the half-space $\mathbb R^N_+,$ see \cite{ADH}. Let $p_t$ be the Poisson kernel and let $q_t:=t\partial_{t}p_t$.
We remark that 
   $$    |\partial^m_t\partial^\alpha_x\partial^\beta_y p_t(x,y)|
    \lesssim t^{-m-|\alpha|-|\beta|}\frac1{V(x,y,t+d(x,y))}\frac{t}{t+\|x-y\|}.
  $$
    These estimates indicate that $q_{t}(x,y)$ for all $x,y\in \mathbb{R}^{N}$ and $t>0,$ satisfy the following:
   \begin{eqnarray*}
    &\textup{(i)}& |q_t(x,y)|\leq  {1\over V(x,y,t+d(x,y))}\frac{t}{t+\|x-y\|};\\
    &\textup{(ii)}& |q_t(x,y)-q_t(x',y)|\\
        &&\leq {\|x-x'\|\over t} \Big({1\over V(x,y,t+d(x,y))}\frac{t}{t+\|x-y\|}+{1\over V(x',y,t+d(x',y))}\frac{t}{t+\|x'-y\|}\Big);\\
    &\textup{(iii)}& |q_t(x,y)-q_t(x,y')|\\
        &&\leq {\|y-y'\|\over t}\Big({1\over V(x,y,t+d(x,y))}\frac{t}{t+\|x-y\|}+{1\over V(x,y',t+d(x,y'))}\frac{t}{t+\|x-y'\|}\Big);\\
    &\textup{(iv)}& \int_{\mathbb{R}^{N}}q_t(x,y) d\omega(y)
        = \int_{\mathbb{R}^{N}}q_t(x,y) d\omega(x)=0.
    \end{eqnarray*}
   
We begin with the following Calder\'on reproducing formula provided in \cite{ADH}.

\begin{thm}\label{CRF}
For $f\in L^2(\mathbb R^N, \omega)$ we have that
\begin{eqnarray}
\label{CRF1}f(x)=\int_0^\infty \psi_{t}\ast q_{t}\ast f(x)\frac{dt}{t},
\end{eqnarray}
where $\psi(x)$ is a radial Schwartz function supported in the unit ball $B(0,1)$, $\psi_t(x,y)=\tau_x\psi_t(-y)$, $\psi_t(x)=t^{-\bf N}\psi(x/t)$ and $r_t\ast f(x)= f(x)=\int_{\mathbb{R}^{N}}r_{t}(x,y)f(y)d\omega(y)$, $r_t\in\{q_t,\psi_t\}$. Moreover, $\psi_{t}(x,y)$ for all $x,y\in \mathbb{R}^{N}$, $t>0$ similar conditions as above hold, but with the support $\{(x,y): d(x,y)\leq t\}.$
\end{thm}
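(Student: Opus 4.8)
The plan is to prove \eqref{CRF1} on the Dunkl transform side and then to pin down $\psi$ by a single scalar normalization, following \cite{ADH}. Write $\mathcal F_{\rm D}$ for the Dunkl transform, an isometry of $L^2(\mathbb R^N,\omega)$ with $\mathcal F_{\rm D}(\triangle_{\rm D}f)(\xi)=-\|\xi\|^2\mathcal F_{\rm D}f(\xi)$. The subordination formula defining $P_t$ identifies it with $e^{-t\sqrt{-\triangle_{\rm D}}}$, so $\mathcal F_{\rm D}(P_tf)(\xi)=e^{-t\|\xi\|}\mathcal F_{\rm D}f(\xi)$ and, since $q_t=t\partial_t p_t$, $\mathcal F_{\rm D}(q_t\ast f)(\xi)=-t\|\xi\|e^{-t\|\xi\|}\mathcal F_{\rm D}f(\xi)$. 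The Poisson kernel---the subordinate of the heat kernel, which is itself the Dunkl translation of a radial Gaussian---and its $t$-derivative are Dunkl translations of radial profiles, while $\psi$ is radial by fiat, so every transform in sight is radial; writing $\mathcal F_{\rm D}\psi(\xi)=\Phi(\|\xi\|)$ and invoking the convolution theorem for $\mathcal F_{\rm D}$, the right-hand side of \eqref{CRF1} has Dunkl transform $\big(-\int_0^\infty\Phi(t\|\xi\|)\,t\|\xi\|\,e^{-t\|\xi\|}\,\tfrac{dt}{t}\big)\,\mathcal F_{\rm D}f(\xi)$. The substitution $s=t\|\xi\|$ collapses the bracket (for every $\xi\neq0$, and $\{\xi=0\}$ is $\omega$-null) to the scale-invariant number $-\int_0^\infty\Phi(s)\,e^{-s}\,ds$. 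Hence \eqref{CRF1} holds in $L^2(\mathbb R^N,\omega)$ as soon as $\psi$ is chosen so that $-\int_0^\infty\Phi(s)\,e^{-s}\,ds=1$, provided one also checks that the $dt/t$-integral converges in $L^2$: after truncation to $\varepsilon<t<\varepsilon^{-1}$, Plancherel bounds the multipliers in modulus by the finite constant $\int_0^\infty|\Phi(s)|\,e^{-s}\,ds$ uniformly, so dominated convergence finishes this point.

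To build $\psi$ I would start from any nonnegative radial $\varphi\in C_c^\infty(\mathbb R^N)$ supported in $B(0,1)$ with $\varphi\not\equiv0$, and observe that $\int_0^\infty(\mathcal F_{\rm D}\varphi)(s)\,e^{-s}\,ds$ equals, up to the positive normalizing constant of $\mathcal F_{\rm D}$, the value $P_1\varphi(0)$ of the Dunkl--Poisson integral of $\varphi$ at the origin, which is strictly positive because the Dunkl--Poisson kernel is strictly positive. Rescaling $\varphi$ by a suitable negative constant then produces a radial Schwartz (indeed $C_c^\infty$) function $\psi$, still supported in $B(0,1)$, whose profile meets the required normalization, and \eqref{CRF1} follows. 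The point worth stressing is that $q_t$ already reproduces on its own on the spectral side, so the auxiliary factor $\psi_t$ costs only this harmless scalar adjustment and may be taken compactly supported for free.

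It then remains to record the kernel properties of $\psi_t$ asserted in the theorem. The size, smoothness and cancellation estimates ``analogous to the above'' follow from the smoothness and rapid decay of $\psi$, the scaling $\psi_t(x)=t^{-{\bf N}}\psi(x/t)$ and the doubling property \eqref{rd}, by exactly the computation that produces (i)--(iv) for $q_t$. The one genuinely Dunkl-specific point---and the step I expect to be the main obstacle---is the support statement $\psi_t(x,y)=0$ unless $d(x,y)\le t$. For this I would invoke R\"osler's product formula for the Dunkl translation of a radial function, which represents $\psi_t(x,y)=\tau_x\psi_t(-y)$ as the integral of $\widetilde\psi_t\big(\sqrt{\|x\|^2+\|y\|^2-2\langle y,\eta\rangle}\big)$ against a probability measure in $\eta$ carried by the convex hull of the orbit $\mathcal O(x)$; minimizing the radicand over that hull yields exactly $\min_{\sigma\in G}\|y-\sigma x\|^2=d(x,y)^2$, so the integrand vanishes identically whenever $d(x,y)>t$. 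Equivalently one can argue via finite propagation speed for the wave propagator $\cos(t\sqrt{-\triangle_{\rm D}})$, using that the Gaussian heat bounds of Theorem A are controlled by $d$ rather than by the Euclidean norm. Assembling these three steps proves Theorem \ref{CRF}.
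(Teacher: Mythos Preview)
The paper does not prove Theorem \ref{CRF}; it is quoted from \cite{ADH} and used as a black box. So there is no ``paper's own proof'' to compare against, and your spectral argument is essentially the standard route one would expect \cite{ADH} to take: pass to the Dunkl transform, use that $q_t$ is the radial multiplier $-t\|\xi\|e^{-t\|\xi\|}$, reduce \eqref{CRF1} to the scalar normalization $-\int_0^\infty\Phi(s)e^{-s}\,ds=1$, and then read off the support of $\psi_t(x,y)$ from R\"osler's product formula. That skeleton is correct, and the convex-hull/extreme-point argument for the support condition is exactly right.

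There is, however, one concrete slip in your construction of $\psi$. You claim that $\int_0^\infty(\mathcal F_{\rm D}\varphi)(s)\,e^{-s}\,ds$ equals, up to a positive constant, $P_1\varphi(0)$. In fact the inversion formula at the origin gives
\[
P_1\varphi(0)=c_\kappa^{-1}\int_{\mathbb R^N}e^{-\|\xi\|}\,\mathcal F_{\rm D}\varphi(\xi)\,d\omega(\xi)
= c'\int_0^\infty \Phi(s)\,e^{-s}\,s^{\mathbf N-1}\,ds,
\]
so the Jacobian weight $s^{\mathbf N-1}$ is missing from your identity, and positivity of $P_1\varphi(0)$ does not directly force $\int_0^\infty\Phi(s)e^{-s}\,ds\neq0$. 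The fix is cheap: take any nonnegative radial $\varphi\in C_c^\infty(B(0,1))$ with $\int\varphi\,d\omega>0$, so that $\Phi(0)>0$, and dilate $\varphi_\lambda(x)=\lambda^{-\mathbf N}\varphi(x/\lambda)$; then $\Phi_\lambda(s)=\Phi(\lambda s)$ and dominated convergence gives $\int_0^\infty\Phi_\lambda(s)e^{-s}\,ds\to\Phi(0)>0$ as $\lambda\to0$, so for some small $\lambda$ the normalizing integral is nonzero and a scalar multiple of $\varphi_\lambda$ works. With that repair your argument goes through.
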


Applying Coifman's decomposition of the identity on $L^2(\mathbb R^N, \omega)$ 
gives
$$I=T_M + R_1 + R_M.$$ That is, 

$$f(x)=\int_0^\infty \psi_t\ast q_t\ast f(x)\frac{dt}{t}=T_M + R_1 + R_M,$$
where
$$T_M(f)(x)=-\ln {\mathfrak R}\sum\limits_{j=-\infty}^\infty\sum\limits_{Q\in Q^j}w(Q) \psi_{j}(x,x_{Q})q_{j}
f(x_{Q}),$$
$$R_1f(x)=-\sum\limits_{j=-\infty}^\infty\int_{{\mathfrak R}^{-j}}^{{\mathfrak R}^{-j+1}}
\Big[\psi_t\ast q_tf(x)- \psi_{j}\ast q_{j}f(x)\Big]\frac{dt}{t}$$
and
$$R_M(f)(x)=-\ln {\mathfrak R}\sum\limits_{j=-\infty}^\infty\sum\limits_{Q\in Q^j}\int_{Q}
\Big[\psi_{j}(x,y)q_{j}f(y)-\psi_{j}(x,x_{Q})q_{j}
f(x_{Q})\Big]d\omega(y),$$ where $q_{j}f(x)$ is always denoted by
$q_{j}\ast f(x), \psi(x,y)$ is the translation of $\psi$ by $x$ and
$\psi_j=\psi_{{\mathfrak R}^j}, q_{j}=q_{{\mathfrak R}^j},$ with $$1<{\mathfrak R}\leq {\mathfrak R}_0$$ for some
fixed ${\mathfrak R}_0$, $Q^j$ is the collection of all $``{\mathfrak R}-{\rm dyadic\ cubes}"\ Q$ with side length ${\mathfrak R}^{-M-j}$ with $M$ is some fixed large integer, and $x_{Q}$ is any fixed point in the cube $Q.$

 We also have the following discrete weak-type Calder\'on reproducing formula in \cite{HHLLT}.

\begin{thm}
    If $f\in L^2(\mathbb R^N, \omega)\cap L^p(\mathbb R^N, \omega), 1<p<\infty,$ then there exists a function $h\in L^2(\mathbb R^N, \omega)\cap L^p(\mathbb R^N, \omega),$ such that $\|f\|_{L^2_\omega}\sim \|h\|_{L^2_\omega}$ and $\|f\|_{L^p_\omega}\sim \|h\|_{L^p_\omega},$
    \begin{equation}\label{CR-1}
    \begin{aligned}
    f(x)=\sum\limits_{j=-\infty}^\infty\sum\limits_{Q\in Q^j}\omega(Q)
    \psi_{j}(x,x_{Q})q_{j}
    h(x_{Q}),
    \end{aligned}
    \end{equation}
    where the series converges in $L^2(\mathbb R^N, \omega)\cap L^p(\mathbb R^N, \omega)$ with $\psi_j=\psi_{{\mathfrak R}^j}, q_{j}=q_{{\mathfrak R}^j}, 1<{\mathfrak R}\leq {\mathfrak R}_0$ for some fixed ${\mathfrak R}_0,$
    $Q^j$ is the collection of all ``${\mathfrak R}$-dyadic cubes" $Q$ with side length ${\mathfrak R}^{-M-j}$ with $M$ is some fixed large integer, and $x_{Q}$ is any fixed point in the cube $Q.$
\end{thm}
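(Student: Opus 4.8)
The plan is to derive the discrete formula \eqref{CR-1} from the continuous Calder\'on reproducing formula \eqref{CRF1} of Theorem \ref{CRF} together with Coifman's decomposition $I=T_M+R_1+R_M$ recalled above. Write $T_M=-(\ln{\mathfrak R})\,S_M$ with
$$S_Mf(x):=\sum_{j=-\infty}^{\infty}\sum_{Q\in Q^j}\omega(Q)\,\psi_{j}(x,x_{Q})\,q_{j}f(x_{Q}),$$
so that \eqref{CRF1} becomes $-(\ln{\mathfrak R})\,S_M=I-(R_1+R_M)$. The core of the argument is to prove that, after fixing $M$ large enough and then $1<{\mathfrak R}\le{\mathfrak R}_0$ close enough to $1$, one has
$$\|R_1\|_{L^2_\omega\to L^2_\omega}+\|R_M\|_{L^2_\omega\to L^2_\omega}\le\tfrac12\qquad\text{and}\qquad\|R_1\|_{L^p_\omega\to L^p_\omega}+\|R_M\|_{L^p_\omega\to L^p_\omega}\le\tfrac12$$
for every $1<p<\infty$. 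Granting this, $I-(R_1+R_M)$ is invertible on $L^2(\mathbb R^N,\omega)$ and on $L^p(\mathbb R^N,\omega)$ through the Neumann series $\sum_{m\ge0}(R_1+R_M)^m$, which has norm at most $2$ on each space; hence $S_M$ is invertible on both spaces, and $h:=S_M^{-1}f$ belongs to $L^2(\mathbb R^N,\omega)\cap L^p(\mathbb R^N,\omega)$ with $\|h\|_{L^2_\omega}\sim\|f\|_{L^2_\omega}$ and $\|h\|_{L^p_\omega}\sim\|f\|_{L^p_\omega}$, and $f=S_Mh$ is exactly \eqref{CR-1}.

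For the $L^2$ bounds I would argue by almost orthogonality. Decompose $R_M$ and $R_1$ into their scale-$j$ pieces $U_jf(x)=-(\ln{\mathfrak R})\sum_{Q\in Q^j}\int_Q[\psi_j(x,y)q_jf(y)-\psi_j(x,x_Q)q_jf(x_Q)]\,d\omega(y)$ and $V_jf(x)=-\int_{{\mathfrak R}^{-j}}^{{\mathfrak R}^{-j+1}}[\psi_t\ast q_tf(x)-\psi_j\ast q_jf(x)]\frac{dt}{t}$. Using the size, regularity and cancellation estimates (i)--(iv) of $q_t$ and the corresponding ones for $\psi_t$ from Theorem \ref{CRF}, one checks that $\|U_jU_k^*\|_{L^2_\omega\to L^2_\omega}$ and $\|U_j^*U_k\|_{L^2_\omega\to L^2_\omega}$ are bounded by ${\mathfrak R}^{-|j-k|\varepsilon}$ times an extra gain arising from the fact that each $Q\in Q^j$ has side length ${\mathfrak R}^{-M-j}\ll{\mathfrak R}^{-j}$, a gain that tends to $0$ as $M\to\infty$; the Cotlar--Stein lemma then gives $\|R_M\|_{L^2_\omega\to L^2_\omega}\to0$ as $M\to\infty$. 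Likewise, for $t\in[{\mathfrak R}^{-j},{\mathfrak R}^{-j+1}]$ the difference $\psi_t\ast q_t-\psi_j\ast q_j$ is a difference of Littlewood--Paley pieces at comparable scales, whose kernel is bounded by a factor tending to $0$ as ${\mathfrak R}\to1^+$ times an almost-orthogonal kernel, so the same scheme yields $\|R_1\|_{L^2_\omega\to L^2_\omega}\to0$ as ${\mathfrak R}\to1^+$. The delicate point here, as stressed in the abstract, is that the decay in these kernels is measured by the Dunkl metric $d$ while several factors, and the support of $\psi_t$, see the Euclidean norm $\|\cdot\|$; carrying out the integrals therefore requires splitting $\mathbb R^N$ according to $G$-orbits and Weyl chambers and using the two-exponent doubling estimate \eqref{rd}.

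For the $L^p$ bounds, note that the kernels of $R_1$ and $R_M$, obtained by integrating and summing the (differences of) products $\psi_t\ast q_t$, satisfy Calder\'on--Zygmund size and H\"older-regularity estimates with respect to the space of homogeneous type $(\mathbb R^N,\|\cdot\|,\omega)$, uniformly in $M$ and ${\mathfrak R}$; combined with their $L^2$-boundedness this gives, via the Calder\'on--Zygmund theory on $(\mathbb R^N,\|\cdot\|,\omega)$ (weak $(1,1)$, Marcinkiewicz interpolation and duality), uniform bounds on $L^r(\mathbb R^N,\omega)$ for $r$ arbitrarily close to $1$ and arbitrarily large. Interpolating a suitable such uniform $L^r$ bound with the \emph{small} $L^2$ bound of the previous paragraph produces, for every $1<p<\infty$, a bound on $L^p(\mathbb R^N,\omega)$ that is small once $M$ is large and ${\mathfrak R}$ close to $1$, which is precisely what the inversion argument needs. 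Finally, since $h\in L^2(\mathbb R^N,\omega)\cap L^p(\mathbb R^N,\omega)$ and $S_M$ is bounded on both spaces, convergence of the series \eqref{CR-1} in $L^2_\omega\cap L^p_\omega$ follows by showing that the tails $\sum_{|j|>L}\sum_{Q\in Q^j}\omega(Q)\psi_j(\cdot,x_Q)q_jh(x_Q)$ tend to $0$ as $L\to\infty$: the small-scale part $j\to+\infty$ and the large-scale part $j\to-\infty$ are each handled using the cancellation (iv) of $q_j$ (respectively of $\psi_j$) together with the $L^2\cap L^p$ membership of $h$, in the standard way.

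The step I expect to be the main obstacle is the pair of remainder estimates above: obtaining Cotlar--Stein and Calder\'on--Zygmund bounds for $R_1$ and $R_M$ that are simultaneously \emph{small} and \emph{uniform} in ${\mathfrak R}$ and $M$, in a geometry where the natural distance is the non-Euclidean Dunkl metric $d$ but the kernels, and the support of the Schwartz function $\psi$, are partly expressed through the Euclidean norm, and where the measure $\omega$ is doubling only with the two distinct exponents $N$ and ${\bf N}$ appearing in \eqref{rd}. Everything else is bookkeeping built on Theorem \ref{CRF} and the Neumann series.
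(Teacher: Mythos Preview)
The paper does not prove this theorem in the text; it is quoted from \cite{HHLLT}. Your strategy---Coifman's decomposition $I=T_M+R_1+R_M$, smallness of the remainder, Neumann series, then $h=T_M^{-1}f$ (up to the constant $-\ln{\mathfrak R}$)---is exactly the scheme the paper itself uses when it proves the companion result Theorem~\ref{thm 3.3} on $\dot{\mathcal F}^{\alpha,q}_{p,{\rm D}}$, and is also the approach of \cite{HHLLT}. So the overall architecture is correct.

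The one point where you diverge from the paper (and from \cite{HHLLT}) is in how you obtain the \emph{small} $L^p$ bounds for $R_1$ and $R_M$. You propose Cotlar--Stein for a small $L^2$ bound, uniform Calder\'on--Zygmund estimates for a bounded-but-not-small $L^r$ bound, and then interpolation. The paper instead shows (see the proof of Theorem~\ref{thm 3.3} and the reference to \cite[pages 54--60]{HHLLT}) that the Dunkl--Calder\'on--Zygmund kernel constants of $R_1$ and $R_M$ are themselves small, of order $({\mathfrak R}-1)$ and ${\mathfrak R}^{-M}$ respectively; the $T1$/CZ machinery then yields small $L^p$ norms directly, with no interpolation needed. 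Your route works too, but the paper's is cleaner and avoids having to track two separate smallness mechanisms. In either approach the genuine work, as you correctly identify, is the kernel analysis in the mixed $d$/$\|\cdot\|$ geometry; that is precisely what occupies \cite[pages 54--60]{HHLLT}.
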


Considering $(\mathbb R^N, \|\cdot\|, \omega)$ as a space of homogeneous type in the sense of Coifman and Weiss,  we use the well known discrete Calder\'on reproducing formula in $L^2(\Bbb R^N, \omega)$ to obtain the reproducing formula. 

\begin{thm}\label{thm 3.3}
Let $q_{{\mathfrak R}^j}$ and $\psi_{{\mathfrak R}^j}$ be given as in Theorem \ref{CRF}. For $|\alpha|<1$ and $1\le p, q< \infty$, if $f\in \dot{\mathcal F}^{\alpha,q}_{p,{\rm D}}$,
then there exists a function $h\in \dot{\mathcal F}^{\alpha,q}_{p,{\rm D}},$ such that $\|f\|_{L^2_\omega}\sim \|h\|_{L^2_\omega}$ and $\|f\|_{\dot{\mathcal F}^{\alpha,q}_{p,{\rm D}}}\sim \|h\|_{\dot{\mathcal F}^{\alpha,q}_{p,{\rm D}}},$
  \begin{align*}
    f(x)=\sum\limits_{j=-\infty}^\infty\sum\limits_{Q\in Q^j}\omega(Q)
    \psi_{{\mathfrak R}^j}(x,x_{Q})q_{{\mathfrak R}^j}
    h(x_{Q}),
    \end{align*}  
 where the series converges in $\dot{\mathcal F}^{\alpha,q}_{p,{\rm D}}$.
\end{thm}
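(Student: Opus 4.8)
The plan is to obtain the discrete Calder\'on reproducing formula of Theorem \ref{thm 3.3} by starting from the $L^2$-theory and transferring the norm equivalences to the $\dot{\mathcal F}^{\alpha,q}_{p,{\rm D}}$ scale. Concretely, I would begin with the discrete weak-type reproducing formula \eqref{CR-1} for $f\in L^2(\mathbb R^N,\omega)\cap L^p(\mathbb R^N,\omega)$, which already produces an auxiliary function $h$ with $\|f\|_{L^2_\omega}\sim\|h\|_{L^2_\omega}$ and $\|f\|_{L^p_\omega}\sim\|h\|_{L^p_\omega}$. Since by definition $\dot{\mathcal F}^{\alpha,q}_{p,{\rm D}}\subseteq L^2(\mathbb R^N,\omega)$, any $f$ in this space is admissible in \eqref{CR-1}; the point is to prove that the operator $f\mapsto h$ (the ``weak-type'' correction operator built in \cite{HHLLT}) and its inverse are bounded on $\dot{\mathcal F}^{\alpha,q}_{p,{\rm D}}$, so that in addition $\|f\|_{\dot{\mathcal F}^{\alpha,q}_{p,{\rm D}}}\sim\|h\|_{\dot{\mathcal F}^{\alpha,q}_{p,{\rm D}}}$.

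The key estimate is an almost-orthogonality / Cotlar-type bound. Writing $D_k=q_{{\mathfrak R}^k}$ (and $\widetilde D_k$ for the discretized version of $\psi_{{\mathfrak R}^k}\ast q_{{\mathfrak R}^k}$ appearing as the building block of the $h$-operator), one has the standard kernel estimates
\begin{equation*}
|D_k E_{k'}(x,y)|\lesssim {\mathfrak R}^{-|k-k'|\varepsilon}\,\frac{1}{\omega(B(x,{\mathfrak R}^{-k\wedge k'}))}\,\frac{{\mathfrak R}^{-(k\wedge k')}}{{\mathfrak R}^{-(k\wedge k')}+\|x-y\|}
\end{equation*}
coming from conditions (i)--(iv) on $q_t$ and $\psi_t$ and the cancellation in both variables. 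Feeding this into the discrete square function that defines $\|\cdot\|_{\dot{\mathcal F}^{\alpha,q}_{p,{\rm D}}}$, together with the Fefferman--Stein vector-valued maximal inequality on $(\mathbb R^N,\|\cdot\|,\omega)$ (which is legitimate since $(\mathbb R^N,\|\cdot\|,\omega)$ is a space of homogeneous type by \eqref{rd}), and summing the geometric series in $|k-k'|$ (the factor ${\mathfrak R}^{(k-k')\alpha}$ from the two levels is absorbed because $|\alpha|<1<\varepsilon$ can be arranged), yields boundedness of the correction operator and of its inverse on $\dot{\mathcal F}^{\alpha,q}_{p,{\rm D}}$. One then runs the same Coifman-decomposition argument $I=T_M+R_1+R_M$ as in the $L^2$ case: $T_M$ is the discrete main term, and the error operators $R_1,R_M$ are shown to have small $\dot{\mathcal F}^{\alpha,q}_{p,{\rm D}}\to\dot{\mathcal F}^{\alpha,q}_{p,{\rm D}}$ norm (bounded by $C({\mathfrak R}-1)$ or $C{\mathfrak R}^{-M}$), so that $T_M=I-(R_1+R_M)$ is invertible on $\dot{\mathcal F}^{\alpha,q}_{p,{\rm D}}$ once ${\mathfrak R}$ is close enough to $1$ and $M$ large; setting $h=T_M^{-1}f$ and expanding gives the stated formula, with convergence of the series in $\dot{\mathcal F}^{\alpha,q}_{p,{\rm D}}$ following from the convergence of the Neumann series.

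Two technical points deserve care. First, the discrete sums $\sum_{Q\in Q^j}\omega(Q)\,\psi_j(x,x_Q)q_j h(x_Q)$ must be shown to converge in $\dot{\mathcal F}^{\alpha,q}_{p,{\rm D}}$, not just in $L^2$; this is handled by a tail estimate on partial sums using the same almost-orthogonality bound restricted to $|j|\ge J$, which goes to zero as $J\to\infty$ because the Littlewood--Paley pieces decay in both the high and low frequency ends (here the constraint $|\alpha|<1$ is exactly what guarantees summability at both ends). Second, one should verify that replacing the continuous average $\int_Q \psi_j(x,y)q_j h(y)\,d\omega(y)$ by the point evaluation $\omega(Q)\psi_j(x,x_Q)q_j h(x_Q)$ produces an error controlled in $\dot{\mathcal F}^{\alpha,q}_{p,{\rm D}}$ by $C{\mathfrak R}^{-M}$ times the norm; this uses the smoothness estimates (ii)--(iii) on $q_t,\psi_t$ and the fact that $Q$ has diameter $\sim{\mathfrak R}^{-M-j}\ll{\mathfrak R}^{-j}$.

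The main obstacle I expect is the almost-orthogonality estimate in the \emph{two-metric} setting: the kernels of $q_t,\psi_t$ are governed by the Euclidean distance $\|x-y\|$ in the ``$t/(t+\|x-y\|)$'' factor but by the Dunkl distance $d(x,y)$ in the volume/Gaussian factor $V(x,y,t+d(x,y))$, and $\psi_t$ is supported in $\{d(x,y)\le t\}$ rather than a Euclidean ball. One must be careful that the composition $D_k\widetilde D_{k'}$ still satisfies a clean single-scale bound in terms of one consistent distance; this is precisely the ``finer decomposition of the underlying space via Dunkl metric and Euclidean metric'' advertised in the abstract, and it is where the orbit-summation $\sum_{\sigma\in G}$ and the comparison $d(x,y)=\min_\sigma\|x-\sigma(y)\|$ must be exploited to bridge the two metrics while keeping the geometric decay in $|k-k'|$. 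Everything else — the maximal function estimates, the Neumann series, the convergence — is routine once that bilinear almost-orthogonality lemma is in hand.
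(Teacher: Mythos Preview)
Your proposal is correct and follows essentially the same strategy as the paper: prove boundedness of $T_M$ on $\dot{\mathcal F}^{\alpha,q}_{p,{\rm D}}$ via the almost-orthogonality Lemma~\ref{aoe}, the orbit decomposition $d(x,y)=\min_{\sigma\in G}\|x-\sigma(y)\|$, and the Fefferman--Stein vector-valued maximal inequality, then show $\|R_1+R_M\|_{\dot{\mathcal F}^{\alpha,q}_{p,{\rm D}}\to\dot{\mathcal F}^{\alpha,q}_{p,{\rm D}}}\le C({\mathfrak R}-1+{\mathfrak R}^{-M})$ so that $T_M$ is invertible and $h=T_M^{-1}f$ works, with convergence of the series handled by a tail estimate exactly as you describe. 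The one implementation detail you do not anticipate is that the paper treats the error operators $R_1,R_M$ not directly on the Dunkl scale but by first establishing the equivalence $\|\cdot\|_{\dot{\mathcal F}^{\alpha,q}_{p,{\rm D}}}\sim\|\cdot\|_{\dot{\mathcal F}^{\alpha,q}_{p,CW}}$ (the Coifman--Weiss Triebel--Lizorkin norm built from $D_k=S_k-S_{k-1}$) and then invoking the Dunkl--Calder\'on--Zygmund $T1$ bound of Lemma~\ref{T aoe} on the CW scale; this detour is what makes the smallness constants $({\mathfrak R}-1)$ and ${\mathfrak R}^{-M}$ transparent, whereas a direct estimate on the Dunkl scale would require redoing the $T1$ argument there.
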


Theorem \ref{thm 3.3} implies the following duality
estimate which will be a key idea for developing the Dunkl--Triebel--Lizorkin
space theory:
\begin{thm}\label{thm 3.4}
Let $|\alpha|<1$, $1< p<\infty$ and $1\leq q\leq\infty$.
    For $f\in \dot{\mathcal F}^{\alpha,q}_{p,{\rm D}}$ and $g\in \dot{\mathcal F}^{-\alpha,q'}_{p',{\psi}}$, then there exists a constant $C$ such that
    $$|\langle f, g\rangle|\leqslant C \|f\|_{\dot{\mathcal F}^{\alpha,q}_{p,{\rm D}}}\|g\|_{\dot{\mathcal F}^{-\alpha,q'}_{p',{\psi}}},$$
   where $p'$ and $q'$ are conjugate numbers of $p$ and $q$, respectively.
\end{thm}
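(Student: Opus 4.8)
The plan is to deduce the duality pairing estimate directly from the discrete reproducing formula of Theorem \ref{thm 3.3}, mimicking the classical argument of Frazier--Jawerth but with the Dunkl geometry replacing the Euclidean one. First I would apply Theorem \ref{thm 3.3} to $f\in\dot{\mathcal F}^{\alpha,q}_{p,{\rm D}}$ to write $f(x)=\sum_{j}\sum_{Q\in Q^j}\omega(Q)\,\psi_{{\mathfrak R}^j}(x,x_Q)\,q_{{\mathfrak R}^j}h(x_Q)$ for a suitable $h$ with $\|h\|_{\dot{\mathcal F}^{\alpha,q}_{p,{\rm D}}}\sim\|f\|_{\dot{\mathcal F}^{\alpha,q}_{p,{\rm D}}}$. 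Pairing against $g\in\dot{\mathcal F}^{-\alpha,q'}_{p',\psi}$ and moving the pairing inside the (absolutely convergent) sum gives
\begin{align*}
|\langle f,g\rangle|
\leq \sum_{j\in\mathbb Z}\sum_{Q\in Q^j}\omega(Q)\,\big|q_{{\mathfrak R}^j}h(x_Q)\big|\,\big|\langle \psi_{{\mathfrak R}^j}(\cdot,x_Q),g\rangle\big|
= \sum_{j\in\mathbb Z}\sum_{Q\in Q^j}\omega(Q)\,\big|q_{{\mathfrak R}^j}h(x_Q)\big|\,\big|\psi_{{\mathfrak R}^j}(g)(x_Q)\big|,
\end{align*}
where in the last step I use that $\psi$ is radial so $\psi_{{\mathfrak R}^j}$ is (essentially) self-adjoint, identifying $\langle \psi_{{\mathfrak R}^j}(\cdot,x_Q),g\rangle$ with $\psi_{{\mathfrak R}^j}(g)(x_Q)$.

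Next I would insert the factors ${\mathfrak R}^{j\alpha}$ and ${\mathfrak R}^{-j\alpha}$ to match the two Triebel--Lizorkin norms, and replace the value $|q_{{\mathfrak R}^j}h(x_Q)|$ (resp. $|\psi_{{\mathfrak R}^j}(g)(x_Q)|$) by its constant value on $Q$ via $\chi_Q$; since $\bigcup_{Q\in Q^j}Q=\mathbb R^N$ (up to null sets) and the cubes in $Q^j$ are disjoint, $\omega(Q)=\int_Q\chi_Q\,d\omega$ lets us rewrite the double sum as an integral:
\begin{align*}
|\langle f,g\rangle|
\leq \int_{\mathbb R^N}\Big(\sum_{j\in\mathbb Z}\sum_{Q\in Q^j}{\mathfrak R}^{j\alpha}\big|q_{{\mathfrak R}^j}h(x_Q)\big|\,\chi_Q(x)\Big)\Big(\sum_{j\in\mathbb Z}\sum_{Q\in Q^j}{\mathfrak R}^{-j\alpha}\big|\psi_{{\mathfrak R}^j}(g)(x_Q)\big|\,\chi_Q(x)\Big)d\omega(x),
\end{align*}
using disjointness of the $Q$'s at each level $j$ to pass from the product of sums over $j$ inside the integral (here one should be a little careful: at a fixed level $j$ only one cube contributes at each $x$, so the ``diagonal in $j$'' structure is respected). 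For $1<q<\infty$ I would then apply H\"older's inequality in $j$ pointwise in $x$ with exponents $q,q'$, followed by H\"older in $x$ with exponents $p,p'$ on $(\mathbb R^N,\omega)$; this yields exactly $\|h\|_{\dot{\mathcal F}^{\alpha,q}_{p,{\rm D}}}\|g\|_{\dot{\mathcal F}^{-\alpha,q'}_{p',\psi}}$. The endpoint cases $q=1$ (so $q'=\infty$) and $q=\infty$ are handled by the obvious modification, pairing the $\ell^1$ sum in $j$ against the $\sup$ in $j$.

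The main obstacle I anticipate is the first step: justifying that the reproducing series for $f$ can be paired term-by-term against $g$, i.e. that $\langle f,g\rangle=\sum_{j,Q}\omega(Q)\,q_{{\mathfrak R}^j}h(x_Q)\,\langle\psi_{{\mathfrak R}^j}(\cdot,x_Q),g\rangle$ with absolute convergence. Since Theorem \ref{thm 3.3} only asserts convergence of the series in the $\dot{\mathcal F}^{\alpha,q}_{p,{\rm D}}$ (quasi-)norm, and $g$ lives in the companion space $\dot{\mathcal F}^{-\alpha,q'}_{p',\psi}$ rather than a dual known a priori, one must first establish that partial sums of the series pair continuously against $g$ with uniformly controlled bounds, and then pass to the limit; this is precisely where the computation above—applied to the partial sums—does double duty, showing both the absolute convergence of the pairing and the final estimate simultaneously. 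A secondary technical point is the interchange used to go from $\sum_j \prod(\cdots)$ inside to $\prod \sum_j(\cdots)$: it relies on the fact that $\{Q : Q\in Q^j\}$ partitions $\mathbb R^N$ for each $j$, so that $\sum_{Q\in Q^j}a_Q\chi_Q(x)\cdot\sum_{Q'\in Q^j}b_{Q'}\chi_{Q'}(x)=\sum_{Q\in Q^j}a_Qb_Q\chi_Q(x)$, and this should be recorded carefully before invoking H\"older.
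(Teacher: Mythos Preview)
Your proposal is correct and follows essentially the same route as the paper: apply Theorem \ref{thm 3.3} to $f$, pair termwise with $g$ using that $\psi$ is radial to identify $\langle\psi_{{\mathfrak R}^j}(\cdot,x_Q),g\rangle=\psi_{{\mathfrak R}^j}(g)(x_Q)$, rewrite the weighted sum as an integral via $\omega(Q)=\int\chi_Q\,d\omega$, and then apply H\"older in $j$ (with exponents $q,q'$) followed by H\"older in $x$ (with exponents $p,p'$). One cosmetic point: your displayed inequality writes the integrand as a product of two full $j$-sums, which introduces off-diagonal terms; the clean version (and the one the paper uses) keeps the single sum $\int\sum_{j}\sum_{Q}\big({\mathfrak R}^{j\alpha}|q_{{\mathfrak R}^j}h(x_Q)|\chi_Q\big)\big({\mathfrak R}^{-j\alpha}|\psi_{{\mathfrak R}^j}g(x_Q)|\chi_Q\big)\,d\omega$ and applies H\"older in $j$ directly to that, exactly as you describe in words afterward.
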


Theorem \ref{thm 3.4} means that each function $f\in
L^2(\mathbb R^N,\omega)$ with $\|f\|_{\dot{\mathcal F}^{\alpha,q}_{p,{\rm D}}}<\infty$ can be considered as a continuous linear functional on $\dot{\mathcal F}^{-\alpha,q'}_{p',{\psi}}$. Therefore, one can consider
$\dot{\mathcal F}^{-\alpha,q'}_{p',{\psi}}$ as a new test
function space and define the Dunkl--Triebel--Lizorkin space  ${\dot F}^{\alpha,q}_{p,{\rm D}}$ as the
collection of appropriate distributions on $\dot{\mathcal F}^{-\alpha,q'}_{p',{\psi}}$. 
 
\subsection{ Calder\'on Reproducing Formula in $\dot{\mathcal F}^{\alpha,q}_{p,{\rm D}}$}

We begin with
the following definition of the test functions in the space of
homogeneous type $(\Bbb R^N,\|\cdot\|,\omega)$:

\begin{defn}\label{test} A function $f(x)$ defined on $\Bbb R^N$ is a test function if there exits a constant $C$ such that for $0<\beta\leqslant 1, \gamma>0, r>0$ and $x_0\in \mathbb R^N,$
    \begin{enumerate}
        \item[(i)] $\displaystyle f(x)\leqslant \frac C{V(x, r+\|x-x_0\|)}\Big(\frac{r}{r+\|x-x_0\|}\Big)^\gamma;$\\[4pt]
        \item[(ii)] $\displaystyle |f(x)-f(x')|\leqslant C\Big(\frac{\|x-x'\|}{r+\|x-x_0\|}\Big)^\beta\frac{1}{V(x,r+\|x-x_0\|)}\Big(\frac{r}{r+\|x-x_0\|}\Big)^\gamma, \\ \quad \text{for}\quad \|x-x'\|\leqslant \frac{1}{2}(r+\|x-x_0\|);$
        \item[(iii)] $\displaystyle \int_{\mathbb R^N} f(x)d\omega(x)=0.$
    \end{enumerate}
    We denote such a test function by $f\in \mathcal M(\beta,\gamma,r,x_0)$ and $\|f\|_{\mathcal M(\beta,\gamma,r,x_0)},$ the norm in $\mathcal M(\beta,\gamma,r,x_0),$ is the smallest $C$ satisfying the above conditions \textnormal{(i)} and \textnormal{(ii)}.
\end{defn}

 Let $\{S_k\}_{k\in \Bbb Z}$ be Coifman's approximation to the identity.
 Then the kernels $S_k(x,y)$ of $S_k$ satisfy the following properties:
\begin{enumerate}
    \item[(i)] $S_k(x,y)=S_k(y,x);$
    \item[(ii)] $S_k(x,y)=0$ if $\|x-y\|>{\mathfrak R}^{4-k}$\ \ and\ \ $\displaystyle |S_k(x,y)|\leqslant \frac C{V_k(x)+V_k(y)};$
    \item[(iii)] $\displaystyle |S_k(x,y)-S_k(x',y)|\leqslant C\frac{{\mathfrak R}^k\|x-x'\|}{V_k(x)+V_k(y)}$\qquad for $\|x-x'\|\leqslant {\mathfrak R}^{8-k};$\\[3pt]
    \item[(iv)] $\displaystyle |S_k(x,y)-S_k(x,y')|\leqslant C\frac{{\mathfrak R}^k\|y-y'\|}{V_k(x)+V_k(y)}$\qquad for $\|y-y'\|\leqslant {\mathfrak R}^{8-k};$
    \item[(v)] $\displaystyle \big|[S_k(x,y)-S_k(x',y)]-[S_k(x,y')-S_k(x',y')]\big|\leqslant C\frac{{\mathfrak R}^k\|x-x'\|r^k\|y-y'\|}{V_k(x)+V_k(y)}$
    \item[] for $\|x-x'\|\leqslant {\mathfrak R}^{8-k}$ and $\|y-y'\|\leqslant {\mathfrak R}^{8-k};$
    \item[(vi)] $\displaystyle \int_{\Bbb R^N} S_k(x,y)d\omega(x)=1\qquad \text{for all}\ y\in \Bbb R^N;$
    \item[(vii)] $\displaystyle \int_{\Bbb R^N} S_k(x,y)d\omega(y)=1\qquad \text{for all}\ x\in \Bbb R^N.$
\end{enumerate}
 Here and
throughout, $V_k(x)$ denotes the measure
$\omega(B(x,{\mathfrak R}^{-k}))$ for ${\mathfrak R}>1, k\in \Bbb Z$ and $x\in \Bbb R^N$. We
also denote by $V(x,y)=\omega(B(x,\|x-y\|))$ for $x,y\in \Bbb R^N$. 

Applying Coifman's decomposition of the identity and Calder\'on-Zygmund theory, the discrete Calder\'on reproducing formula in space of homogeneous type is the following result.

\begin{thm}\label{dCRh}
    Let $\{S_k\}_{k\in \Bbb Z}$ be Coifman's approximation to the identity and set
    ${D}_k := {S}_k - {S}_{k-1}$. Then there exists a family of operators
    $\{\widetilde{D}_k\}_{k\in \Bbb Z}$ such that for any fixed $x_{Q}\in Q$ with $k\in \Bbb Z$ and $Q$ are ``${\mathfrak R}$-dyadic cubes" with the side length ${\mathfrak R}^{-M-k},$
    $$f(x)=\sum\limits_{k=-\infty}^\infty \sum\limits_{Q\in Q^k}\omega(Q){\widetilde D}_k(x,x_{Q})D_k(f)(x_{Q}),$$
    where the series converge in $L^p(\mathbb{R}^N, \omega)$, $1<p<\infty$, $\mathcal M(\beta,\gamma,r,x_0),$ and in $(\mathcal M(\beta,\gamma,r,x_0))^\prime,$ the dual of in $\mathcal M(\beta,\gamma,r,x_0)$.  Moreover, the kernels of the operators $\widetilde{D}_k$
    satisfy the following conditions:
    \begin{enumerate}
        \item[(i)] $\displaystyle|\widetilde{D}_k(x,y)|\leqslant C\frac1{V_k(x)+V_k(y)+V(x,y)}\frac{{\mathfrak R}^{-k}}{{\mathfrak R}^{-k}+\|x-y\|};$
        \item[(ii)] $\displaystyle|\widetilde{D}_k(x,y)-\widetilde{D}_k(x',y)|\leqslant C\frac{\|x-x'\|}{{\mathfrak R}^{-k}+\|x-x'\|}\frac1{V_k(x)+V_k(y)+V(x,y)}\frac{{\mathfrak R}^{-k}}{{\mathfrak R}^{-k}+\|x-y\|}$,
        \item[]for $\|x-x'\|\leqslant ({\mathfrak R}^{-k}+\|x-y\|)/2;$
        \item[(iii)] $\displaystyle \int_{\Bbb R^N} \widetilde{D}_k(x,y)d\omega(x)=0\qquad \text{for all}\ y\in \Bbb R^N;$
        \item[(iv)] $\displaystyle \int_{\Bbb R^N} \widetilde{D}_k(x,y)d\omega(y)=0\qquad \text{for all}\ x\in \Bbb R^N.$
    \end{enumerate}
    Similarly, there exists a family of linear operators
    $\{\widetilde{\widetilde{D}}_k\}_{k\in \Bbb Z}$ such that for any fixed $x_{Q}\in Q,$
    $$f(x)=\sum\limits_{k=-\infty}^\infty \sum\limits_{Q\in Q^k}\omega(Q)D_k(x,x_{Q})\widetilde{\widetilde{D}}_k(f)(x_{Q}),$$
    where the kernels of the operators $\widetilde{\widetilde{D}}_k$
    satisfy the above conditions {\rm (i), (ii), (iii),} and {\rm (iv)} with $x$ and $y$ interchanged.
 \end{thm}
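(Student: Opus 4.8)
The plan is to follow the standard construction of discrete Calder\'on reproducing formulas on a space of homogeneous type (cf.\ David--Journ\'e--Semmes and Han--Sawyer), here for $(\mathbb R^N,\|\cdot\|,\omega)$ with the doubling bounds \eqref{rd1.1}--\eqref{rd}. First I record the telescoping identity $I=\sum_{k\in\mathbb Z}D_k$, convergent in $L^p_\omega$ ($1<p<\infty$), in $\mathcal M(\beta,\gamma,r,x_0)$, and in its dual; this is immediate from $\int S_k(x,y)\,d\omega(x)=\int S_k(x,y)\,d\omega(y)=1$ together with $S_k\to I$ as $k\to\infty$ and $S_k\to 0$ as $k\to-\infty$. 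Next I establish the almost-orthogonality estimates $\|D_jD_k\|\lesssim \mathfrak R^{-\varepsilon|j-k|}$ on $L^2_\omega$ and, with a slightly smaller regularity exponent, on $\mathcal M(\beta,\gamma,r,x_0)$ and on $(\mathcal M(\beta,\gamma,r,x_0))'$, a direct kernel computation using the cancellation of $D_j,D_k$, their regularity, and \eqref{rd}. Writing $T_N:=\sum_{|j-k|\le N}D_jD_k=I-R_N$ with $R_N:=\sum_{|j-k|>N}D_jD_k$, these bounds give $\|R_N\|<\tfrac12$ in all three topologies once $N$ is large, so $T_N^{-1}=\sum_{m\ge0}R_N^m$ is bounded and preserves the Calder\'on--Zygmund-type kernel bounds with controlled constants. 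Since $T_Nf=\sum_k D_k^N D_kf$ with $D_k^N:=\sum_{|i|\le N}D_{k+i}$, applying $T_N^{-1}$ yields the continuous-type formula $f=\sum_k A_k D_kf$ with $A_k:=T_N^{-1}D_k^N$, whose kernel obeys (i)--(iv) at scale $\mathfrak R^{-k}$.

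Next I discretize. For a large integer $M$ set $R_Mf(x):=\sum_k\sum_{Q\in Q^k}\omega(Q)\,A_k(x,x_Q)\,D_kf(x_Q)$; writing $D_kf(x_Q)=\int D_k(x_Q,z)f(z)\,d\omega(z)$, the operator $R_M-I$ has kernel $\sum_k\sum_{Q\in Q^k}\int_Q\big[A_k(x,y)D_k(y,z)-A_k(x,x_Q)D_k(x_Q,z)\big]\,d\omega(y)$. Since $\mathrm{diam}(Q)\sim\mathfrak R^{-M-k}\ll\mathfrak R^{-k}$, the regularity of $A_k$ and $D_k$ supplies a gain $\mathfrak R^{-M\beta'}$, and, combined once more with almost-orthogonality in $k$, one gets $\|R_M-I\|\lesssim\mathfrak R^{-M\varepsilon'}<1$ on $L^2_\omega$, on $\mathcal M(\beta,\gamma,r,x_0)$, and on $(\mathcal M(\beta,\gamma,r,x_0))'$ for $M$ large. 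Hence $R_M^{-1}=\sum_{m\ge0}(I-R_M)^m$ is bounded and preserves the kernel class, and applying it to $R_Mf$ gives $f=\sum_k\sum_{Q\in Q^k}\omega(Q)\,\widetilde D_k(x,x_Q)\,D_kf(x_Q)$ with $\widetilde D_k:=R_M^{-1}A_k=R_M^{-1}T_N^{-1}D_k^N$. The kernel of $\widetilde D_k$ inherits the size bound (i) and the $x$-regularity (ii) from composing the Calder\'on--Zygmund-type operators $R_M^{-1},T_N^{-1}$ (acting in $x$) with the scale-$\mathfrak R^{-k}$ kernel $D_k^N$; the cancellation (iv) in $y$ follows from $\int D_k^N(w,y)\,d\omega(w)=0$, and the cancellation (iii) in $x$ from the fact that $R_M$ and $T_N$ map mean-zero functions to mean-zero functions (so do their Neumann inverses), whence $\int\widetilde D_k(x,y)\,d\omega(x)=0$.

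Finally, the second family $\widetilde{\widetilde D}_k$ is produced by the mirror-image argument: using the other grouping $T_N=\sum_k D_k D_k^N$ one has $f=\sum_k D_k\big(D_k^N T_N^{-1}f\big)$; discretizing the outer $D_k$ and writing $\widetilde R_Mf(x):=\sum_k\sum_{Q\in Q^k}\omega(Q)D_k(x,x_Q)\big(D_k^N T_N^{-1}f\big)(x_Q)=f+\widetilde Ef$ with $\|\widetilde E\|<1$ for $M$ large, the substitution $f=\widetilde R_M(\widetilde R_M^{-1}f)$ yields $f=\sum_k\sum_{Q\in Q^k}\omega(Q)D_k(x,x_Q)\,\widetilde{\widetilde D}_kf(x_Q)$ with $\widetilde{\widetilde D}_k:=D_k^N T_N^{-1}\widetilde R_M^{-1}$, which now carries the Calder\'on--Zygmund-type operator on the right, so its kernel satisfies (i)--(iv) with $x$ and $y$ interchanged. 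Convergence of both series in $L^p_\omega$, in $\mathcal M(\beta,\gamma,r,x_0)$, and in $(\mathcal M(\beta,\gamma,r,x_0))'$ is obtained by showing the tails are small in these norms via the same kernel estimates and a Plancherel--P\'olya-type inequality for $\{D_kf(x_Q)\}_{k,Q}$. I expect the principal obstacle to be the bookkeeping of the two inversions: proving that $R_N$ and $R_M-I$ are small not merely on $L^2_\omega$ but also on $\mathcal M(\beta,\gamma,r,x_0)$ and its dual, and that the Neumann-series inverses remain in the class of operators obeying (i)--(iv) with uniform constants---i.e.\ the ``almost-algebra'' stability of Calder\'on--Zygmund-type kernels under composition on this space of homogeneous type, where the two-sided doubling \eqref{rd1.1} has to be used carefully.
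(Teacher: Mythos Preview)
The paper does not supply a proof of this theorem: it is stated as a known result, introduced only by the sentence ``Applying Coifman's decomposition of the identity and Calder\'on--Zygmund theory, the discrete Calder\'on reproducing formula in space of homogeneous type is the following result,'' and used as a black box thereafter. Your outline reproduces precisely the standard Han--Sawyer/David--Journ\'e--Semmes construction (telescoping $I=\sum_k D_k$, almost-orthogonality $\|D_jD_k\|\lesssim\mathfrak R^{-\varepsilon|j-k|}$, Neumann inversion of $T_N$, discretization with a second Neumann inversion, and the mirror argument for $\widetilde{\widetilde D}_k$) that underlies this theorem in the references the paper implicitly invokes, so there is no genuine methodological difference to report.

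One small caution on your cancellation argument for (iii): since every $D_j$ has $\int D_j(x,y)\,d\omega(x)=0$, the operators $T_N$ and $R_M$ map \emph{all} of $L^2_\omega$ into mean-zero functions, so they are not invertible on the full space; the inversion takes place on $\mathcal M(\beta,\gamma,r,x_0)$ (which already carries the cancellation $\int f\,d\omega=0$) and its dual modulo constants. With that understood, your stated obstacle---stability of the Calder\'on--Zygmund kernel class under the Neumann-series compositions on $\mathcal M$ and $\mathcal M'$---is indeed the technical heart of the argument, and it is exactly what the references (e.g.\ \cite{DeH}, \cite{HMY}) establish.
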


It is well known that in the classical case, almost orthogonality estimates are fundamental tools in discrete Calder\'on reproducing formula.
The following result provides such a tool in the Dunkl setting, see \cite[Lemmas 2.10 and 2.11]{HHLLT}. 
\begin{lem}\label{T aoe}
    Let $T$ be a Dunkl--Calder\'on--Zygmund operator satisfying $T(1)=T^*(1)=0$. Then
    \begin{align*}
    &\bigg|\int_{\Bbb R^N}\int_{\Bbb R^N} D_k(x,u)K(u,v)D_j(v,y)
    d\omega(u)d\omega(v)\bigg|\\
    &\quad \lesssim {\mathfrak R}^{-|k-j|\varepsilon'}
    \frac1{V(x,y, {\mathfrak R}^{(-j)\vee (-k)}+d(x,y))}\Big(\frac{{\mathfrak R}^{(-j)\vee (-k)}}{{\mathfrak R}^{(-j)\vee (-k)}+d(x,y)}\Big)^\gamma,
    \end{align*}
    where $\gamma, \varepsilon'\in (0,\varepsilon)$.
\end{lem}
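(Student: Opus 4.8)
The plan is to prove the almost orthogonality estimate by exploiting the two cancellation conditions $T(1)=T^*(1)=0$ together with the regularity and size estimates of the kernels $D_k$ and $D_j$, and the standard Calder\'on--Zygmund estimates on $K(u,v)$. By symmetry (interchanging the roles of $k$ and $j$, and of $x$ and $y$) it suffices to treat the case $k\ge j$, so that ${\mathfrak R}^{(-j)\vee(-k)}={\mathfrak R}^{-j}$ is the larger scale. Write the double integral as $\langle T(D_j(\cdot,y)), D_k(\cdot,x)\rangle$; the goal is to gain the factor ${\mathfrak R}^{-(k-j)\varepsilon'}$ in addition to the expected size/decay in $d(x,y)$.

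First I would handle the \emph{near-diagonal} regime, say $d(x,y)\lesssim {\mathfrak R}^{-j}$. Here one writes $D_k(u,x)=\bigl(D_k(u,x)-D_k(x',x)\cdot\text{(normalization)}\bigr)$ using $\int D_k(u,x)\,d\omega(u)=0$ to subtract a constant, and then uses the smoothness estimate (ii) for $D_k$ on the scale ${\mathfrak R}^{-k}$ against the smoothness of the $T$-kernel on the scale ${\mathfrak R}^{-j}$ away from its singularity; the support of $D_k$ in $u$ near $x$ forces $\|u-x\|\lesssim {\mathfrak R}^{-k}$, and pairing this with the Lipschitz estimate on $K$ (or on $TD_j$, which inherits a H\"older bound from Calder\'on--Zygmund theory since $T(1)=0$) produces a factor $({\mathfrak R}^{-k}/{\mathfrak R}^{-j})^{\varepsilon'}={\mathfrak R}^{-(k-j)\varepsilon'}$. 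The measure factor $V(x,y,{\mathfrak R}^{-j}+d(x,y))^{-1}\sim V_j(x)^{-1}$ comes out of the $L^1$-normalization of the kernels together with the doubling property \eqref{rd}, and the decay factor is $\sim 1$ in this regime.

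Second I would handle the \emph{off-diagonal} regime $d(x,y)\gg {\mathfrak R}^{-j}$. Now one splits $\mathbb R^N\times\mathbb R^N$ (in the $u,v$ variables) according to whether $u,v$ are close to $x$, close to $y$, or in the intermediate region, using the (essentially compact) support or rapid decay of $D_k(\cdot,x)$ near $x$ and of $D_j(\cdot,y)$ near $y$. On the region where $u$ is near $x$ and $v$ is near $y$, the kernel $K(u,v)$ is smooth (since $d(u,v)\sim d(x,y)>0$ is away from the diagonal), and one uses the cancellation of $D_j$ in $v$ (subtract $K(u,y)$) and of $D_k$ in $u$ (subtract the resulting value at $x$) to extract two smoothness gains: one of size $({\mathfrak R}^{-j}/d(x,y))$ from the $v$-cancellation against the CZ-regularity of $K$, giving the decay factor $\bigl({\mathfrak R}^{-j}/({\mathfrak R}^{-j}+d(x,y))\bigr)^\gamma$, and one of size $({\mathfrak R}^{-k}/{\mathfrak R}^{-j})^{\varepsilon'}$ from the $u$-cancellation, giving the required ${\mathfrak R}^{-(k-j)\varepsilon'}$. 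The remaining regions (where $u$ or $v$ strays into the intermediate zone) are controlled directly by the decay of the kernels $D_k,D_j$ in $\|u-x\|$ resp.\ $\|v-y\|$ and the weak boundedness / size estimates on $K$, again invoking \eqref{rd} and $\eqref{rd1.1}$ to convert Euclidean radii into $\omega$-measures and to pass between the metrics $d$ and $\|\cdot\|$.

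The main obstacle I expect is the bookkeeping of the two metrics: the kernels $D_k$, $\widetilde D_k$ are written with the Euclidean distance $\|x-y\|$ and Euclidean-radius balls, whereas the conclusion is phrased with the Dunkl distance $d(x,y)$ and the heat-kernel-type volume $V(x,y,\cdot)$; reconciling these requires carefully tracking $G$-orbits and using $d\le\|\cdot\|$ together with the comparability of $\omega(B(x,r))$ to $r^N\prod_\alpha(|\langle\alpha,x\rangle|+r)^{\kappa(\alpha)}$, as well as Theorem A to pass the CZ estimates for $T$ through the heat/Poisson representation. This is precisely the ``finer decomposition of the underlying space via Dunkl metric and Euclidean metric'' advertised in the introduction, and it is where the argument departs from the purely homogeneous-type template; the rest reduces to the classical almost-orthogonality computations (cf.\ \cite{HHLLT}).
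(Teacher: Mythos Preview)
The paper does not actually prove this lemma: it is stated with the sentence ``The following result provides such a tool in the Dunkl setting, see \cite[Lemmas 2.10 and 2.11]{HHLLT}'' and no argument is given here. So there is no in-paper proof to compare your proposal against; the authors defer entirely to the cited preprint.

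That said, your outline is the standard route for such almost-orthogonality estimates and is consistent with what one expects the proof in \cite{HHLLT} to do: reduce by symmetry to $k\ge j$, use the mean-zero of $D_k(x,\cdot)$ together with the H\"older regularity of $TD_j(\cdot,y)$ (available from $T(1)=T^*(1)=0$ and the Dunkl--CZ estimates on $K$) to extract the factor ${\mathfrak R}^{-(k-j)\varepsilon'}$, and in the far regime $d(x,y)\gg {\mathfrak R}^{-j}$ use the cancellation of $D_j(\cdot,y)$ against the CZ smoothness of $K$ to produce the decay $\bigl({\mathfrak R}^{-j}/({\mathfrak R}^{-j}+d(x,y))\bigr)^\gamma$. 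Your identification of the two-metric bookkeeping (Euclidean $\|\cdot\|$ in the $D_k$ kernels versus Dunkl $d$ and the volume $V(x,y,\cdot)$ in the conclusion) as the nonstandard part is exactly right, and is the reason the result is isolated as a lemma rather than absorbed into a citation of the homogeneous-type literature. One small point: in the near-diagonal step you should be a bit more explicit that the gain ${\mathfrak R}^{-(k-j)\varepsilon'}$ comes from pairing the cancellation of $D_k$ with the \emph{H\"older continuity of $u\mapsto TD_j(u,y)$} at scale ${\mathfrak R}^{-j}$ (a consequence of $T^*(1)=0$ and the weak boundedness/CZ estimates), not with ``the Lipschitz estimate on $K$'' directly, since $K$ is singular on the diagonal and the $u$-integration region can meet that diagonal.
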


We next need the definition of smooth molecules.  
\begin{defn}\label{sm}
    A function $f(x)$ is a smooth molecule for $0<\beta\leqslant 1, \gamma>0, r>0$ and some fixed $x_0\in \mathbb{R}^N,$ if $f(x)$ satisfies the following conditions:
        \begin{equation}\label{sm1.17}
        |f(x)|\le C \frac{1}{V(x,x_0,r+d(x,x_0))}\Big(\frac{r}{r+{\|x-x_0\|}}\Big)^\gamma;
        \end{equation}
        \begin{equation}\label{sm1.18}
        \begin{aligned}
        |f(x)-f(x')|&\le C  \Big(\frac{\|x-x'\|}{r}\Big)^\beta
        \Big\{   \frac{1}{V(x,x_0,r+d(x,x_0))}\Big(\frac{r}{r+{\|x-x_0\|}}\Big)^\gamma\\
        &\qquad\qquad + \frac{1}{V(x',x_0,r+d(x',x_0))}\Big(\frac{r}{r+{\|x'-x_0\|}}\Big)^\gamma\Big\};
        \end{aligned}
        \end{equation}
        \begin{equation}\label{sm1.19}
        \int_{\mathbb{R}^N} f(x) d\omega(x)=0.
        \end{equation}
    If $f(x)$ is a smooth molecule, we denote $f(x)$ by $f\in \mathbb M(\beta, \gamma, r, x_0)$ and define the norm of $f$ by
    $$\|f\|_{\mathbb M(\beta, \gamma, r, x_0)}:=\inf\{C: (\ref{sm1.17})-(\ref{sm1.18})\ {\rm hold}\}.$$
\end{defn}
Observe that $t\partial_t p_t(x,y)$ with $p_t,$ the Poisson kernel,
is a smooth molecule with $\beta, \gamma=1, r=t, x_0=y$ for fixed
$y,$ and it is also a smooth molecule for $x_0=x$ for $x$ is
fixed.

\begin{defn}\label{wsm}
    A function $f(x)$ is said to be a \textit{Dunkl smooth molecule} for $0<\beta\leqslant 1, \gamma>0, r>0$ and some fixed $x_0\in \mathbb{R}^N,$ if $f(x)$ satisfies the following conditions:
    \begin{equation}\label{wsm1.20}
    |f(x)|\leqslant C \frac{1}{V(x,x_0,r+d(x,x_0))}\Big(\frac{r}{r+{d(x,x_0)}}\Big)^\gamma;
    \end{equation}
    \begin{equation}\label{wsm1.21}
         \begin{aligned}
    |f(x)-f(x')|&\leqslant C  \Big(\frac{\|x-x'\|}{r}\Big)^\beta
    \Big\{   \frac{1}{V(x,x_0,r+d(x,x_0))}\Big(\frac{r}{r+{d(x,x_0)}}\Big)^\gamma\\
    &\qquad\qquad + \frac{1}{V(x',x_0,r+d(x',x_0))}\Big(\frac{r}{r+{d(x',x_0)}}\Big)^\gamma\Big\};
         \end{aligned}
    \end{equation}
    \begin{equation}\label{f1}
    \int_{\mathbb{R}^N} f(x) d\omega(x)=0.
    \end{equation}
    If $f(x)$ is a Dunkl smooth molecule, we denote $f(x)$ by $f\in \widetilde {\mathbb M}(\beta, \gamma, r, x_0)$ and define the norm of $f$ by
    $$\|f\|_{\widetilde{\mathbb M}(\beta, \gamma, r, x_0)}:=\inf\{C: (\ref{wsm1.20})- (\ref{wsm1.21})\ {\rm hold} \}.$$
\end{defn}
To clarify the difference between Definitions \ref{sm} and \ref{wsm} we remark that all $\|x-x_0\|$ and $\|x'-x_0\|$ in the classical smooth molecule definition are simply replaced by $d(x,x_0)$ and $d(x',x_0)$ in the Dunkl smooth molecule, respectively.

\begin{lem}\label{aoe}
    Let $x,y\in \mathbb{R}^N$ and $\varepsilon_0,t,s>0$ with $t\ge s.$  Suppose that $f_t(x,\cdot)$ is a Dunkl smooth molecule in $\widetilde{\mathbb M}(\varepsilon_0,\varepsilon_0, t, x)$ and
    $g_s(\cdot,y)$ is a  smooth molecule  in $\mathbb M(\varepsilon_0,\varepsilon_0, s, y)$. Then for any $0<\varepsilon_1,\varepsilon_2<\varepsilon_0$,
    there exists $C=C(\varepsilon_0,\varepsilon_1,\varepsilon_2)>0$, such that for all $t\ge s>0,$
    \begin{equation}\label{aoe1}
    \int_{\Bbb R^N}f_t(x,u)g_s(u,y)d\omega(u)\le C \bigg(\frac{s}{t}\bigg)^{\varepsilon_1} \frac1{V(x,y,t+d(x,y))}\Big(\frac{t}{t+d(x,y)}\Big)^{\varepsilon_2}.
    \end{equation}
    If $f_t(x,\cdot)$ and $g_s(\cdot,y)$ both are smooth molecules in $\mathbb M(\varepsilon_0,\varepsilon_0, t, x)$ and $\mathbb M(\varepsilon_0,\varepsilon_0, s, y),$ respectively,
    then for any $0<\varepsilon_1,\varepsilon_2<\varepsilon_0$, there exists $C=C(\varepsilon_0,\varepsilon_1,\varepsilon_2)>0$, such that for all $t, s>0,$
    \begin{align}\label{aoe2}
    &\int_{\Bbb R^N}f_t(x,u)g_s(u,y)d\omega(u)\\
    &\le C \bigg(\frac{s}{t}\wedge \frac{t}{s}\bigg)^{\varepsilon_1} \frac1{V(x,y,(t\vee s)+d(x,y))}\Big(\frac{t\vee s}{(t\vee s)+\|x-y\|}\Big)^{\varepsilon_2},\nonumber
    \end{align}
    where $a\wedge b=\min\{a,b\}$ and $a\vee b=\max\{a,b\}.$
\end{lem}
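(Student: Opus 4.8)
The plan is to prove the two almost-orthogonality estimates by the standard splitting-of-the-integral argument, exploiting the cancellation condition \eqref{f1} (resp.\ \eqref{sm1.19}) on whichever molecule has the smaller scale. First I would set up the geometry: write $t\ge s$ in the first assertion and split $\mathbb{R}^N$ into the ``near'' region $U_1=\{u: d(u,x)\le \tfrac12(t+d(x,y))\}$ and its complement $U_2$, and similarly keep track of whether $d(x,y)$ is comparable to $t$ or much larger. On $U_2$ one does not need cancellation at all: one bounds $|f_t(x,u)|$ and $|g_s(u,y)|$ by their size estimates \eqref{wsm1.20} and \eqref{sm1.17}, uses the doubling property \eqref{rd} of $\omega$ together with $d(u,x)\gtrsim t+d(x,y)$ to extract the gain $(t/(t+d(x,y)))^{\varepsilon_2}$ and a factor $(s/t)^{\varepsilon_1}$ from the mismatch of the $(s/(s+\cdot))^\gamma$ versus $(t/(t+\cdot))^\gamma$ profiles, and integrates in $u$ using that $\int_{d(u,x)\gtrsim \rho}V(x,u,d(x,u))^{-1}(s/(s+\|u-y\|))^\gamma\,d\omega(u)$ is controlled when $\gamma$ exceeds the homogeneous dimension exponent coming from \eqref{rd1.1}.

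The main point is the ``near'' region $U_1$. Here I would invoke the cancellation $\int g_s(u,y)\,d\omega(u)=0$ to rewrite
\[
\int_{U_1}f_t(x,u)g_s(u,y)\,d\omega(u)=\int_{U_1}\big(f_t(x,u)-f_t(x,z)\big)g_s(u,y)\,d\omega(u)+\text{(error from }U_2\text{ cut)},
\]
where $z$ is chosen so that $f_t(x,z)$ is the natural ``center value'' (one can take $z$ with $\|u-z\|\sim s$ localized near $u$, or more cleanly integrate against a mean over the $s$-ball). Then the Hölder-type smoothness \eqref{wsm1.21} of $f_t(x,\cdot)$ supplies the crucial factor $(\|u-z\|/t)^{\varepsilon_0}\lesssim (s/t)^{\varepsilon_0}$; combined with the size bound on $g_s(u,y)$ and the fact that $\int_{\mathbb{R}^N}|g_s(u,y)|\,d\omega(u)\lesssim 1$ uniformly in $s$ (again from \eqref{sm1.17} and doubling), this yields $\lesssim (s/t)^{\varepsilon_1}\cdot V(x,y,t+d(x,y))^{-1}(t/(t+d(x,y)))^{\varepsilon_2}$ after absorbing $\varepsilon_0-\varepsilon_1$ into the decay and controlling the measure factor by $d(u,x)\le\tfrac12(t+d(x,y))\Rightarrow V(x,u,\cdot)\sim V(x,y,t+d(x,y))$. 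The error terms generated by truncating at $\partial U_1$ are handled exactly as on $U_2$. For the second assertion \eqref{aoe2}, when $t\ge s$ one argues verbatim; when $s>t$ one uses instead the cancellation $\int f_t(x,u)\,d\omega(u)=0$ and the smoothness of $g_s(\cdot,y)$, and since both molecules are now of the non-Dunkl type with $\|x-x_0\|$ rather than $d(x,x_0)$ in their profiles, the final estimate carries $\|x-y\|$ in the last factor; the symmetric gain $(s/t\wedge t/s)^{\varepsilon_1}$ is just the combination of the two cases.

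The step I expect to be the genuine obstacle is the interplay between the Dunkl metric $d$ and the Euclidean norm $\|\cdot\|$. The Dunkl smooth molecule $f_t$ decays in $d(x,x_0)$ while the ordinary molecule $g_s$ decays in $\|x-x_0\|$; since $d(x,y)\le\|x-y\|$ but the reverse can fail badly (the two can differ by an arbitrary amount, cf.\ Theorem \ref{lip comp}), one must be careful that the region decompositions and the change-of-variables in the $u$-integral are done with respect to the \emph{right} metric at each stage. Concretely, the size estimate for $f_t$ gives decay only in $d(u,x)$, so when one integrates $|f_t(x,u)||g_s(u,y)|$ one cannot simply use $d$-balls throughout; one has to pass through the orbit space $X=\mathbb{R}^N/G$ of Theorem \ref{lip and lipd}'s proof (where $d$ is a genuine metric with doubling measure $\mathbf m$), summing over the finitely many group elements $\sigma\in G$ to convert a $d$-ball of $u$ into a union of $\|\cdot\|$-balls of $\sigma(u)$. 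Once this bookkeeping is in place, the estimates reduce to the classical space-of-homogeneous-type almost-orthogonality lemma, and the remaining computations are routine. I would also remark that the hypothesis $t\ge s$ in the first part is exactly what makes the ``smaller scale carries the cancellation'' mechanism unambiguous, which is why \eqref{aoe1} is stated one-sided while \eqref{aoe2} is two-sided.
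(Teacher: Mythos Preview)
The paper does not actually supply a proof of this lemma; it is stated (immediately after Definitions~\ref{sm} and~\ref{wsm}) and then invoked in the proof of Theorem~\ref{thm 3.3}, with no argument or citation attached. The closely related Lemma~\ref{T aoe} is credited to \cite[Lemmas~2.10 and~2.11]{HHLLT}, and the present lemma is presumably of the same provenance, so there is no ``paper's own proof'' to compare against.

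That said, your outline is the standard almost-orthogonality argument and is correct in substance. One point to tighten: in the near region you should subtract $f_t(x,y)$, not $f_t(x,z)$ for some auxiliary $z$ with $\|u-z\|\sim s$. The cancellation $\int g_s(u,y)\,d\omega(u)=0$ gives
\[
\int f_t(x,u)g_s(u,y)\,d\omega(u)=\int\bigl(f_t(x,u)-f_t(x,y)\bigr)g_s(u,y)\,d\omega(u),
\]
and then the smoothness estimate \eqref{wsm1.21} (applied in the second variable of $f_t$, with $u'=y$) supplies the factor $(\|u-y\|/t)^{\varepsilon_0}$. Since $g_s(\cdot,y)\in\mathbb M(\varepsilon_0,\varepsilon_0,s,y)$ decays in $\|u-y\|$, integrating in $u$ converts this into $(s/t)^{\varepsilon_1}$; the remaining size factor from \eqref{wsm1.21} already carries the decay in $d(y,x)$ needed for the right-hand side of \eqref{aoe1}. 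Your diagnosis of the $d$ versus $\|\cdot\|$ issue is exactly right: the argument works because the H\"older increment in \eqref{wsm1.21} is measured in $\|\cdot\|$ even though the size profile is in $d$, and this matches the $\|\cdot\|$-decay of $g_s$. The far-region and \eqref{aoe2} parts are as you describe.
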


We now prove Theorem \ref{thm 3.3}.
\begin{proof}[Proof of Theorem \ref{thm 3.3}]
We first show that $T_M$ is bounded on $\dot{\mathcal F}^{\alpha,q}_{p,{\rm D}}$.
Recall that $$T_M(f)(x)=-\ln {\mathfrak R}\sum\limits_{k\in \mathbb Z}\sum\limits_{Q\in Q^k}w(Q) \psi_{k}(x,x_{Q})q_{k}
f(x_{Q}).$$ 
Let $f\in \dot{\mathcal F}^{\alpha,q}_{p,{\rm D}}$.
For $1\le q<\infty$, we write 
\begin{align*}
\|T_M(f)\|_{\dot{\mathcal F}^{\alpha,q}_{p,{\rm D}}}
&=\Big\|\Big\{\sum_{k'\in \mathbb Z}\sum_{Q'\in Q^{k'}} \big({\mathfrak R}^{k'\alpha}|q_{k'}(T_Mf)(x_{Q'})|\big)^q\chi_{Q'}\Big\}^{1/q}\Big\|_{L^p_\omega}\\
&\le C\Big\|\Big\{\sum_{k'\in \mathbb Z} \sum_{Q'\in Q^{k'}}\Big({\mathfrak R}^{k'\alpha}\Big|q_{k'}\Big(\sum_{k\in \Bbb Z}\sum\limits_{Q\in Q^k}w(Q) \psi_{k}(x_{Q'},x_{Q})q_{k}
f(x_{Q})\Big)\Big|\Big)^q\chi_{Q'}\Big\}^{1/q}\Big\|_{L^p_\omega}\\
&\le C\Big\|\Big\{\sum_{k'\in \mathbb Z}  \sum_{Q'\in Q^k}\Big({\mathfrak R}^{k'\alpha}\sum_{k\in \Bbb Z}\sum\limits_{Q\in Q^k}w(Q) |q_{k'}\psi_k(x_{Q'},x_Q)q_{k}
f(x_{Q})|\Big)^q\chi_{Q'}\Big\}^{1/q}\Big\|_{L^p_\omega}.
\end{align*}
By Lemma \ref{aoe}, we choose $|\alpha|<\varepsilon<1$ such that
$$|q_{k'}\psi_{k}(x_{Q'},x_{Q})|
\le C {\mathfrak R}^{-|k'-k|\varepsilon} \frac1{V(x_{Q'},x_Q,{\mathfrak R}^{(-k\vee -k')}+d(x_{Q'},x_Q))}\Big(\frac{{\mathfrak R}^{(-k\vee -k')}}{{\mathfrak R}^{(-k\vee -k')}+\|x_{Q'}-x_Q\|}\Big)^{\varepsilon}.$$
Observing that $\omega(B(x,{\mathfrak R}^{-k\vee -k'}+d(x,x_{Q^k})))\sim \omega(B(y,{\mathfrak R}^{-k\vee -k'}+d(y,x_{Q^k})))$ for $x,y\in Q^{k'},$ hence,
        \begin{align*}
        &|q_{k'}\psi_{k}(x_{Q'},x_{Q})|\chi_{Q'}(x) \\
        &\lesssim C{\mathfrak R}^{-|k-k'|\varepsilon} \frac1{V(x_Q,x_{Q'},{\mathfrak R}^{-k\vee -k'}+d(x_{Q^{'}},x_{Q}))}
        \bigg(\frac{{\mathfrak R}^{-k \vee -k'}}{{\mathfrak R}^{-k \vee -k'} +d(x_{Q^{'}},x_{Q})}\bigg)^{\varepsilon} \\
        &\lesssim C{\mathfrak R}^{-|k-k'|\varepsilon} \frac1{\omega(B(x_{Q},{\mathfrak R}^{-k\vee -k'}+d(x,x_{Q}))}
        \bigg(\frac{{\mathfrak R}^{-k \vee -k'}}
        {{\mathfrak R}^{-k \vee -k'} +d(x,x_{Q})}\bigg)^{\varepsilon}.
        \end{align*}
 Applying $d(x,y)=\min\limits_{\sigma\in G}\|\sigma(x)-y\|$ gives
        \begin{align*}
        &|q_{k'}\psi_{k}(x_{Q'},x_{Q})|\chi_{Q'}(x) \\
        &\lesssim \sum\limits_{\sigma\in G} {\mathfrak R}^{-|k-k'|\varepsilon} \frac1{\omega(B(x_Q,{\mathfrak R}^{-k\vee -k'}+\|\sigma(x)-x_{Q}\|))}
        \bigg(\frac{{\mathfrak R}^{-k \vee -k'}}{{\mathfrak R}^{-k \vee -k'} +\|\sigma(x)-x_{Q}\|}\bigg)^{\varepsilon}\\
        &\lesssim \sum\limits_{\sigma\in G} {\mathfrak R}^{-|k-k'|\varepsilon} \frac1{\omega(B(\sigma(x),{\mathfrak R}^{-k\vee -k'}+\|\sigma(x)-x_{Q}\|))}
        \bigg(\frac{{\mathfrak R}^{-k \vee -k'}}{{\mathfrak R}^{-k \vee -k'} +\|\sigma(x)-x_{Q}\|}\bigg)^{\varepsilon}.
        \end{align*}
        Let $\theta$ satisfy that $\frac{\bf N}{{\bf N}+\varepsilon-|\alpha|}<\theta<p.$
        Then
   \begin{align*}
        &\sum\limits_{Q\in Q^k}w(Q)\frac1{\omega(B(\sigma(x),{\mathfrak R}^{-k\vee -k'}+\|\sigma(x)-x_{Q}\|))}
        \bigg(\frac {{\mathfrak R}^{-k\vee -k'}}{{\mathfrak R}^{-k\vee -k'}+\|\sigma(x)-x_{Q}\|}\bigg)^{\varepsilon}|q_{k}
f(x_{Q})|\\
        &\leqslant \bigg\{\sum\limits_{Q\in Q^k}w(Q)^\theta \frac1{\omega(B(\sigma(x),{\mathfrak R}^{-k\vee -k'}+\|\sigma(x)-x_{Q}\|))
            )^\theta}
        \bigg(\frac {{\mathfrak R}^{-k\vee -k'}}{{\mathfrak R}^{-k\vee -k'}+\|\sigma(x)-x_{Q}\|}\bigg)^{\theta\varepsilon}|q_{k}f(x_{Q})|^\theta\bigg\}^{1\over\theta}.
        \end{align*}
    Denote by $c_Q$ the center point of $Q$. Let $A_0=\{Q\in Q^k : \|c_Q-\sigma(x)\|\leqslant {\mathfrak R}^{-k\vee -k'}\}$
        and $A_\ell=\{Q\in Q^k : {\mathfrak R}^{\ell-1+(-k\vee -k')}<\|c_Q-\sigma(x)\|\leqslant {\mathfrak R}^{\ell+(-k\vee -k')}\}$ for $\ell\in \mathbb N$.
        We use \eqref{rd1.1} to obtain that for $Q\in Q^k,$
        $$\omega(Q)\chi_{Q}(z)\sim \omega(B(z,
        {\mathfrak R}^{-k}))\chi_{Q}(z)\sim
        \omega(B(\sigma(z), {\mathfrak R}^{-k}))\chi_{Q}(z)\quad\mbox{for }\sigma\in G$$
        and
        $$\omega(B(x_{Q},{\mathfrak R}^{-k\vee -k'}))\lesssim
        {\mathfrak R}^{[k+(-k\vee -k')]{\bf N}}\omega(B(x_{Q},{\mathfrak R}^{-k})).$$
        Hence,
        \begin{align*}
        &\sum\limits_{Q\in Q^k} w(Q)^\theta \frac1{\omega(B(\sigma(x),{\mathfrak R}^{-k\vee -k'}+\|\sigma(x)-x_{Q}\|)))^\theta}
        \bigg(\frac {{\mathfrak R}^{-k\vee -k'}}{{\mathfrak R}^{-k\vee -k'}+\|\sigma(x)-x_{Q}\|}\bigg)^{\theta\varepsilon}|q_{k}f(x_{Q})|^\theta\\
        &=\sum\limits_{\ell=0}^\infty\sum\limits_{Q\in A_\ell}w(Q)^\theta \frac1{\omega(B(\sigma(x),{\mathfrak R}^{-k\vee -k'}+\|\sigma(x)-x_{Q}\|))
            )^\theta}
        \bigg(\frac {{\mathfrak R}^{-k\vee -k'}}{{\mathfrak R}^{-k\vee -k'}+\|\sigma(x)-x_{Q}\|}\bigg)^{\theta\varepsilon}|q_{k}f(x_{Q})|^\theta\\
        &\lesssim {\mathfrak R}^{[-k-(-k\vee -k')]{\bf N}(\theta-1)]}\sum\limits_{\ell=0}^\infty \Big(\frac {\omega(B(\sigma(x),{\mathfrak R}^{-k\vee -k'})))}{\omega(B(\sigma(x),{\mathfrak R}^{\ell+(-k\vee -k')})))}
        \Big)^{\theta-1} \frac 1{{\mathfrak R}^{\theta\varepsilon\ell}}\\
        &\qquad\qquad\times \frac1{\omega(B(\sigma(x),{\mathfrak R}^{\ell+(-k\vee -k')})))}\int_{\|\sigma(x)-z\|\leqslant 2{\mathfrak R}^{\ell+(-k\vee -k')}}
        \sum\limits_{Q\in A_\ell}|q_{k}f(x_{Q})|^\theta\chi_{Q}(z)d\omega(z)\\
        &\lesssim {\mathfrak R}^{[-k-(-k\vee -k')]{\bf N}(\theta-1)]}\sum\limits_{\ell=0}^\infty \frac1{{\mathfrak R}^{\ell[\theta\varepsilon+{\bf N}(\theta-1)]}} M\Big(\sum\limits_{Q\in Q^k}|q_{k}f(x_{Q})|^\theta\chi_{Q} \Big)(\sigma(x))\\
        &\lesssim  {\mathfrak R}^{[-k-(-k\vee -k')]{\bf N}(\theta-1)]}M\Big(\sum\limits_{Q\in Q^k}|q_{k}f(x_{Q})|^\theta\chi_{Q} \Big)(\sigma(x)),
        \end{align*}
        where $M$ denote the Hardy-Littlewood maximal operator on $(\Bbb R^N, \|\cdot\|,\omega)$.  Therefore,
        \begin{align*}
        &\sum\limits_{Q\in Q^k}w(Q)\frac1{\omega(B(\sigma(x),{\mathfrak R}^{-k\vee -k'}+\|\sigma(x)-x_{Q}\|))}
        \bigg(\frac {{\mathfrak R}^{-k\vee -k'}}{{\mathfrak R}^{-k\vee -k'}+\|\sigma(x)-x_{Q}\|}\bigg)^{\varepsilon}|q_{k}f(x_{Q})|\\
        &\lesssim {\mathfrak R}^{[-k-(-k\vee -k')]{\bf N}(1-1/\theta)]}
        \bigg\{M\Big(\sum\limits_{Q\in Q^k}|q_{k}f(x_{Q})|^\theta\chi_{Q}\Big)(\sigma(x))\bigg\}^{1/\theta}
        \end{align*}
        and
        \begin{equation}\label{eq 2.9}
        \begin{aligned}
        &\sum\limits_{Q\in Q^k} {\mathfrak R}^{(k'-k)\alpha}\omega(Q)|q_{k'}\psi_{k}(x_{Q'},x_{Q})q_{k}f(x_{Q})|\chi_{Q'}(x) \\
        &\lesssim \sum\limits_{\sigma\in G}{\mathfrak R}^{-|k-k'|(\varepsilon-|\alpha|)}{\mathfrak R}^{[-k-(-k'\vee -k)]{\bf N}(1-\frac 1\theta)}
        \bigg\{M\Big(\sum\limits_{Q\in Q^k}|q_{k}f(x_{Q})|^\theta\chi_{Q}\Big)(\sigma(x))\bigg\}^{1/\theta}\chi_{Q'}(x).
        \end{aligned}
        \end{equation}
        It is clear that for $\frac{\bf N}{{\bf N}+\varepsilon-|\alpha|}<\theta<p,$
        $$\sup_{k'}\sum\limits_{k\in \Bbb Z} {\mathfrak R}^{-|k-k'|(\varepsilon-|\alpha|)}{\mathfrak R}^{[-k-(-k'\vee -k)]{\bf N}(1-\frac 1\theta)}<\infty.$$
        By H\"older's inequality, we have
        \begin{align*}
        &\Big|{\mathfrak R}^{k'\alpha}\sum\limits_{k\in\mathbb Z}\sum\limits_{Q\in Q^k}\omega(Q)
        q_{k'}\psi_{k}(x_{Q'},x_{Q})q_{k}f(x_{Q})\Big|^q\chi_{Q'}(x) \\
        &\lesssim \sum\limits_{\sigma\in G}\sum\limits_{k\in \mathbb Z} {\mathfrak R}^{-|k-k'|(\varepsilon-|\alpha|)}{\mathfrak R}^{[-k-(-k'\vee -k)]{\bf N}(1-\frac 1\theta)}
        \bigg\{{\mathfrak R}^{k\alpha}M\Big(\sum\limits_{Q^k}|\lambda_{Q^k}|^\theta\chi_{Q^k}\Big)(\sigma(x))\bigg\}^{q/\theta}\chi_{Q'}(x).
        \end{align*}
        This implies
        \begin{align*}
        &\Big\{\sum\limits_{k'\in \mathbb Z}\sum\limits_{Q'\in Q^{k'}}
        \Big|{\mathfrak R}^{k'\alpha}\sum\limits_{k\in \mathbb Z}\sum\limits_{Q\in Q^k} \omega(Q)q_{k'}\psi_{k}(x_{Q'},x_{Q})q_{k}f(x_{Q})\Big|^q\chi_{Q'}\Big\}^{1/q} \\
        &\qquad\lesssim \bigg\{\sum\limits_{\sigma\in G}\sum\limits_{k\in \mathbb Z}
        \bigg\{{\mathfrak R}^{k\alpha}M\Big(\sum\limits_{Q\in Q^k}|\lambda_Q|^\theta\chi_{Q}\Big)(\sigma(x))\bigg\}^{q/\theta}\bigg\}^{1/q},
        \end{align*}
        where for $\frac{\bf N}{{\bf N}+\varepsilon-|\alpha|}<\theta<p$, the estimate
         \begin{align}\label{sup finite}
         \sup_k\sum\limits_{k'\in \Bbb Z} {\mathfrak R}^{-|k-k'|(\varepsilon-|\alpha|)}{\mathfrak R}^{[-k-(-k'\vee -k)]{\bf N}(1-\frac 1\theta)}<\infty
          \end{align}
        is used.
        The Fefferman--Stein vector valued maximal function inequality with $\theta<p$ yields
        \begin{align*}
        &\Big\|\Big\{\sum\limits_{k'\in \mathbb Z}\sum\limits_{Q'\in Q^{k'}}
        \Big|{\mathfrak R}^{k'\alpha}\sum\limits_{k\in \mathbb Z}\sum\limits_{Q\in Q^k} \omega(Q)q_{k'}\psi_{k}(x_{Q'},x_{Q})q_{k}f(x_{Q})\Big|^q\chi_{Q'}\Big\}^{1/q}\Big\|_{L^p_\omega} \\
        &\qquad\lesssim \sum\limits_{\sigma\in G}
        \Big\|\Big(\sum\limits_{k\in \mathbb Z}\sum\limits_{Q\in Q^k}|{\mathfrak R}^{k\alpha} q_kf(x_Q)|^q\chi_Q(\sigma(x))\Big)^{\frac{1}{q}}\Big\|_{L^p_\omega}.
        \end{align*}
        Since $G$ is a finite group and
        $$\int_{\Bbb R^N}  f(\sigma(x))d\omega(x)=\int_{\Bbb R^N} f(x)d\omega(x),$$
        we have
        \begin{align*}
        \|T_M(f)\|_{\dot{\mathcal F}^{\alpha,q}_{p,{\rm D}}}
        &\lesssim \Big\|\sum\limits_{k'\in \mathbb Z}\sum\limits_{Q'\in Q^{k'}}
        \Big|\sum\limits_{k\in \mathbb Z}\sum\limits_{Q\in Q^k} \omega(Q)q_{k'}\psi_{k}(x_{Q'},x_{Q})q_{k}f(x_{Q})\Big|^q\chi_{Q'}\Big\|_{L^p_\omega} \\
        &\lesssim
        \Big\|\Big(\sum\limits_{k\in \mathbb Z}\sum\limits_{Q\in Q^k}|q_{k}f(x_{Q})|^q\chi_Q(x)\Big)^{\frac{1}{q}}\Big\|_{L^p_\omega}.
        \end{align*}
For $q=\infty$, we write
\begin{align*}
\|T_M(f)\|_{\dot{\mathcal F}^{\alpha,\infty}_{p,{\rm D}}}
&=\Big\|\sup_{k'\in \mathbb Z}\sum_{Q'\in Q^{k'}} {\mathfrak R}^{k'\alpha}q_{k'}(f)(x_{Q'})\chi
	_Q\Big\|_{L^p_\omega}\\
&\le C\Big\|\sup_{k'\in \mathbb Z}\sum_{Q'\in Q^{k'}}\Big({\mathfrak R}^{k'\alpha}\Big|q_{k'}\Big(\sum_{k\in \Bbb Z}\sum\limits_{Q\in Q^k}w(Q) \psi_{k}(x_{Q'},x_{Q})q_{k}
f(x_{Q})\Big)\Big|\Big)\chi_{Q'}\Big\|_{L^p_\omega}\\
&\le C\Big\|\sup_{k'\in \mathbb Z}\sum_{Q'\in Q^{k'}}\Big({\mathfrak R}^{k'\alpha}\sum_{k\in \Bbb Z}\sum\limits_{Q\in Q^k}w(Q) |q_{k'}\psi_k(x_{Q'},x_Q)q_{k}
f(x_{Q})|\Big)\chi_{Q'}\Big\|_{L^p_\omega}.
\end{align*}
By \eqref{eq 2.9}, we have
\begin{align*}
&{\mathfrak R}^{k'\alpha}\sum_{k\in \Bbb Z}\sum\limits_{Q\in Q^k}w(Q) |q_{k'}\psi_k(x_{Q'},x_Q)q_{k}
f(x_{Q})|\chi_{Q'}(x) \\
& \lesssim \sum\limits_{\sigma\in G}{\mathfrak R}^{-|k-k'|(\varepsilon-|\alpha|)}{\mathfrak R}^{[-k-(-k'\vee -k)]{\bf N}(1-\frac 1\theta)}
        \bigg\{M\Big(\sup_{k\in \mathbb Z}{\mathfrak R}^{k\alpha}\sum\limits_{Q\in Q^k}|q_{k}f(x_{Q})|^\theta\chi_{Q}\Big)(\sigma(x))\bigg\}^{1/\theta}\chi_{Q'}(x).
\end{align*}
Now by using \eqref{sup finite} again we obtain that 

        \begin{align*}
        & \sup_{k'\in \mathbb Z}\sum\limits_{Q'\in Q^{k'}}
        \Big|{\mathfrak R}^{k'\alpha}\sum\limits_{k\in \mathbb Z}\sum\limits_{Q\in Q^k} \omega(Q)q_{k'}\psi_{k}(x_{Q'},x_{Q})q_{k}f(x_{Q})\Big|\chi_{Q'} (x)\\
        &\qquad\lesssim \bigg\{\sum\limits_{\sigma\in G}
        \bigg\{M\Big(\sup_{k\in \mathbb Z}{\mathfrak R}^{k\alpha}\sum\limits_{Q\in Q^k}|\lambda_Q|^\theta\chi_{Q}\Big)(\sigma(x))\bigg\}^{1/\theta}\bigg\},
        \end{align*}
 Another application of the Fefferman-Stein vector valued maximal function inequality with $\theta<p$ yields
        \begin{align*}
        \|T_M(f)\|_{\dot{\mathcal F}^{\alpha,\infty}_{p,{\rm D}}}
        &\lesssim \Big\|\sup_{k'\in \mathbb Z}\sum\limits_{Q'\in Q^{k'}}
        \Big|{\mathfrak R}^{k'\alpha}\sum\limits_{k\in \mathbb Z}\sum\limits_{Q\in Q^k} \omega(Q)q_{k'}\psi_{k}(x_{Q'},x_{Q})q_{k}f(x_{Q})\Big|\chi_{Q'}\Big\|_{L^p_\omega} \\
        &\lesssim \sum\limits_{\sigma\in G}
        \Big\|\Big(\sup_{k\in \mathbb Z}\sum\limits_{Q\in Q^k}|{\mathfrak R}^{k\alpha} q_kf(x_Q)|\chi_Q(\sigma(x))\Big)\Big\|_{L^p_\omega}\\
        &\lesssim
        \Big\|\Big(\sup_{k\in \mathbb Z}\sum\limits_{Q\in Q^k}|q_{k}f(x_{Q})|\chi_Q(x)\Big)\Big\|_{L^p_\omega}.
        \end{align*}
             
Secondly, we are going to show $T_M^{-1}$ is also bounded on $\dot{\mathcal F}^{\alpha,q}_{p,{\rm D}}$. It suffices to show that if $M$ is chosen large enough and ${\mathfrak R}$ is close enough to $1$, for $f\in \dot{\mathcal F}^{\alpha,q}_{p,{\rm D}}$,  there exists a constant $C$ such that
\begin{equation}\label{eq 2.10}
\|R_1(f)+R_M(f)\|_{\dot{\mathcal F}^{\alpha,q}_{p,{\rm D}}}\le C(r-1+{\mathfrak R}^{-M})\|f\|_{\dot{\mathcal F}^{\alpha,q}_{p,{\rm D}}}.
\end{equation}
To this end, we need the Triebel--Lizorkin space defined in the space of
homogeneous type $(\Bbb R^N,\|\cdot\|,\omega)$ in the sense of Coifman and Weiss.  For $|\alpha|<1<{\mathfrak R}, 1\le p< \infty$ and $1\le q\le \infty$, define
$$\dot{\mathcal F}^{\alpha,q}_{p, CW}=\{f\in L^2(\Bbb R^n, \omega) : \|f\|_{\dot{\mathcal F}^{\alpha,q}_{p,CW}}<\infty\},$$
where
$$\|f\|_{\dot{\mathcal F}^{\alpha,q}_{p,CW}}
	=\begin{cases} \displaystyle \Big\|\Big\{\sum_{k\in \mathbb Z}\sum_{Q\in Q^k} \big({\mathfrak R}^{k\alpha}|D_k(f)(x_Q)|\big)^q\chi_Q\Big\}^{1/q}\Big\|_{L^p_\omega}\quad & \text{if}\ \ 1\le q<\infty \\
	\displaystyle \Big\|\sup_{k\in \mathbb Z}\sum_{Q\in Q^k} {\mathfrak R}^{k\alpha}|D_k(f)(x_Q)|\chi
	_Q\Big\|_{L^p_\omega} &\text{if}\ \ q=\infty. \end{cases}$$
We only consider the case $1\le q<\infty$ as the case $q=\infty$ is similar. 
For $f\in \dot{\mathcal F}^{\alpha,q}_{p,CW}$, we use Theorem \ref{dCRh} to get 
\begin{align*}
\|f\|_{\dot{\mathcal F}^{\alpha,q}_{p,{\rm D}}}
&=\bigg\{\sum_{k\in \Bbb Z} \Big({\mathfrak R}^{k\alpha}\Big\|q_{k}\Big(\sum_{j\in \Bbb Z} \sum_{Q\in Q^j}\omega(Q){\widetilde D}_j(x,x_{Q})D_j(f)(x_{Q})\Big)\Big\|_p\Big)^q\bigg\}^{1/q}.
\end{align*}
Note that ${\widetilde D}_j(\cdot ,x_{Q})\in \mathbb M(1,1, {\mathfrak R}^{-j}, x_{Q}).$ We use Lemma \ref{aoe} again to obtain
$$|q_k{\widetilde D}_j(x,x_{Q})|\le C{\mathfrak R}^{-|k-j|\varepsilon}w(Q) \frac1{V(x,x_Q,{\mathfrak R}^{(-j\vee -k)}+d(x,x_Q))}\Big(\frac{{\mathfrak R}^{(-j\vee -k)}}{{\mathfrak R}^{(-j\vee -k)}+\|x-x_Q\|}\Big)^{\varepsilon}.$$
By above method, we have
$$\|f\|_{\dot{\mathcal F}^{\alpha,q}_{p,{\rm D}}}\le C\|f\|_{\dot{\mathcal F}^{\alpha,q}_{p,CW}}.
$$
Conversely, for $f\in \dot{\mathcal F}^{\alpha,q}_{p,{\rm D}}$, we have
\begin{align*}
\|f\|_{\dot{\mathcal F}^{\alpha,q}_{p,CW}}
&=\Big\|\Big\{\sum_{k\in \mathbb Z}\sum_{Q\in Q^k} \big({\mathfrak R}^{k\alpha}|D_k(f)(x_Q)|\big)^q\chi_Q\Big\}^{1/q}\Big\|_{L^p_\omega}\\
&\le \Big\|\Big\{\sum_{k\in \mathbb Z}\sum_{Q\in Q^k} \big({\mathfrak R}^{k\alpha}|D_k(T_Mf)(x_Q)|\big)^q\chi_Q\Big\}^{1/q}\Big\|_{L^p_\omega}\\
&\qquad +\Big\|\Big\{\sum_{k\in \mathbb Z}\sum_{Q\in Q^k} \big({\mathfrak R}^{k\alpha}|D_k(R_1f)(x_Q)|\big)^q\chi_Q\Big\}^{1/q}\Big\|_{L^p_\omega}\\
&\qquad +\Big\|\Big\{\sum_{k\in \mathbb Z}\sum_{Q\in Q^k} \big({\mathfrak R}^{k\alpha}|D_k(R_Mf)(x_Q)|\big)^q\chi_Q\Big\}^{1/q}\Big\|_{L^p_\omega}\\
&:=I_1+I_2+I_3.
\end{align*}
We use the same argument to obtain $I_1\le C\|f\|_{\dot{\mathcal F}^{\alpha,q}_{p,{\rm D}}}.$
To estimate $I_2$, we write 
$$R_1f(x)=-\sum\limits_{j=-\infty}^\infty\int_{{\mathfrak R}^{-j}}^{{\mathfrak R}^{-j+1}} \Big[\psi_t\ast q_t\ast f(x)-
\psi_{j}\ast q_{j}\ast
f(x)\Big]\frac{dt}{t}=-\sum\limits_{j=-\infty}^\infty\int_{{\mathfrak R}^{-j}}^{{\mathfrak R}^{-j+1}}S_j(f)(x)\frac{dt}{t},$$
where $S_j(f)(x)=\int_{\mathbb R^N} S_j(x,y)f(y)d\omega(y)$ with
$S_j(x,y)=\psi_{t}q_{ t}(x,y)-\psi_{j}q_{j}(x,y).$ $R_1(x,y),$ the
kernel  of $R_1,$ can be written as
$$R_1(x,y)=-\sum\limits_{j=-\infty}^\infty\int_{{\mathfrak R}^{-j}}^{{\mathfrak R}^{-j+1}} S_j(x,y)\frac{dt}{t}.$$
We have for any $0<\varepsilon<1$,
\begin{itemize}
\item[(1)]    $|R_1(x,y)|\le C({\mathfrak R}-1)\Big(\frac{\displaystyle d(x,y)}{\displaystyle \|x-y\|}\Big)^\varepsilon \frac{\displaystyle 1}{\displaystyle \omega(B(x,d(x,y)))}$;\\[4pt]
\item[(2)] $|R_1(x,y)-R_1(x,y')|\le C({\mathfrak R}-1)\Big(\frac{\displaystyle \|y-y'\|}{\displaystyle \|x-y\|}\Big)^\varepsilon\frac{\displaystyle 1}{\displaystyle \omega(B(x,d(x,y)))}$\  for $\|z-z'\|\le d(x,z)/2;$\\[4pt]
 \item[(3)] $|R_1(x',y)-R_1(x,y)|\le C({\mathfrak R}-1) \Big(\frac{\displaystyle \|x-x'\|}{\displaystyle \|x-y\|}\Big)^\varepsilon\frac{\displaystyle 1}{\displaystyle \omega(B(x,d(x,y)))}$\ for $\|x-x'\|\le d(x,z)/2.$
 \end{itemize}
We also obtain that $R_1$ is a Dunkl--Calder\'on--Zygmund operator with coefficient $(r-1)$ (for details see \cite[pages 54-60]{HHLLT}).  
By Theorem \ref{dCRh} and Lemma \ref{T aoe} and the above arguments,
\begin{align*}
I_2&\le \Big\|\Big\{\sum_{k\in \mathbb Z}\sum_{Q\in Q^k} \big({\mathfrak R}^{k\alpha}\Big|D_k\Big(R_1\Big(\sum\limits_{j\in \mathbb Z}\sum_{Q\in Q^j}\omega(Q)D_j(x,x_{Q})\widetilde{\widetilde{D}}_j(f)(x_{Q}) \Big)\Big
)(x_Q)\Big|\Big)^q\chi_Q\Big\}^{1/q}\Big\|_{L^p_\omega}\\
&\le \Big\|\Big\{\sum_{k\in \mathbb Z}\sum_{Q\in Q^k} \big({\mathfrak R}^{k\alpha}\Big|\sum\limits_{j\in \mathbb Z}\sum_{Q\in Q^j}\omega(Q)D_kR_1D_j(x,x_{Q})\widetilde{\widetilde{D}}_j(f)(x_{Q}) (x_Q)\Big|\Big)^q\chi_Q\Big\}^{1/q}\Big\|_{L^p_\omega}\\
&\le \bigg\{\sum_{k\in \Bbb Z} \Big({\mathfrak R}^{k\alpha}
            \Big\|\sum\limits_{j\in \mathbb Z}\sum_{Q\in Q^j}\omega(Q)D_kR_1D_j(x,x_{Q})\widetilde{\widetilde{D}}_j(f)(x_{Q}) )\Big\|_p\Big)^q\bigg\}^{1/q} \\   
&\le C({\mathfrak R}-1)\|f\|_{\dot{\mathcal F}^{\alpha,q}_{p,CW}},            
\end{align*}
{where the last inequality is done by the same argument as in \cite[Proposition 5,4]{HMY}.}

Similarly, we also have that  $R_M$ is an Dunkl-Calder\'on-Zygmund operator with coefficient ${\mathfrak R}^{-M}$ and
$$I_3\le C{\mathfrak R}^{-M}\|f\|_{\dot{\mathcal F}^{\alpha,q}_{p,CW}}.$$
Choosing $M$ large enough and $\mathfrak R$ greater than 1 and close to 1 such that
$$\|f\|_{\dot{\mathcal F}^{\alpha,q}_{p,CW}}\le C\|f\|_{\dot{\mathcal F}^{\alpha,q}_{p,{\rm D}}}+\frac14 \|f\|_{\dot{\mathcal F}^{\alpha,q}_{p,CW}}+\frac 14\|f\|_{\dot{\mathcal F}^{\alpha,q}_{p, CW}},$$ 
we get
$$\|f\|_{\dot{\mathcal F}^{\alpha,q}_{p,CW}}\le C\|f\|_{\dot{\mathcal F}^{\alpha,q}_{p,{\rm D}}}.$$
Therefore, 
\begin{align*}
\|R_1(f)\|_{\dot{\mathcal F}^{\alpha,q}_{p,{\rm D}}}
&\le C\|R_1(f)\|_{\dot{\mathcal F}^{\alpha,q}_{p,CW}} =C\Big\|\Big\{\sum_{k\in \mathbb Z}\sum_{Q\in Q^k} \big({\mathfrak R}^{k\alpha}|D_k(R_1f)(x_Q)|\big)^q\chi_Q\Big\}^{1/q}\Big\|_{L^p_\omega}\\
&\le C({\mathfrak R}-1)\|f\|_{\dot{\mathcal B}^{\alpha,q}_{p,CW}} \\
&\le C({\mathfrak R}-1)\|f\|_{\dot{\mathcal B}^{\alpha,q}_{p,{\rm D}}}.
\end{align*}
Similarly, $$\|R_M(f)\|_{\dot{\mathcal F}^{\alpha,q}_{p,{\rm D}}}\le C{\mathfrak R}^{-M}\|f\|_{\dot{\mathcal F}^{\alpha,q}_{p,{\rm D}}}.$$
Hence, the inequality \eqref{eq 2.10} holds and the inequality gives
$$\|T_M^{-1}f\|_{\dot{\mathcal F}^{\alpha,q}_{p,{\rm D}}}\le C\|f\|_{\dot{\mathcal F}^{\alpha,q}_{p,{\rm D}}}.$$
Set $h=T_M^{-1}f$. we obtain $f=T_Mh$ 
which implies $\|h\|_{L^2_\omega}\sim \|f\|_{L^2_\omega}$ and $\|h\|_{\dot{\mathcal F}^{\alpha,q}_{p,{\rm D}}}\sim \|f\|_{\dot{\mathcal F}^{\alpha,q}_{p,{\rm D}}}.$
Since 
$$f=T_Mh= -\ln {\mathfrak R}\sum_{j=-\infty}^\infty\sum_{Q\in Q^j}w(Q) \psi_{j}(x,x_{Q})q_{j}
h(x_{Q}).$$
It remains to show that the above series converges in $\dot{\mathcal F}^{\alpha,q}_{p,{\rm D}}$.
We observe that
$$f(x)+\ln {\mathfrak R}\sum_{|j|\le \ell}\sum_{Q\in Q^j}w(Q) \psi_{j}(x,x_{Q})q_{j}
h(x_{Q})=-\ln {\mathfrak R}\sum_{|j|>\ell} \sum_{Q\in Q^j}w(Q) \psi_{j}(x,x_{Q})q_{j}
h(x_{Q})$$
for $f\in L^2(\mathbb R^N, \omega)$. Therefore, we only to show that
$$\lim_{\ell\to \infty} \Big\|\sum_{|j|>\ell} \sum_{Q\in Q^j}w(Q) \psi_{j}(x,x_{Q})q_{j}
h(x_{Q})\Big\|_{\dot{\mathcal F}^{\alpha,q}_{p,{\rm D}}}=0.$$
Since 
\begin{align*}
&\Big\|\sum_{|j|>\ell} \sum_{Q\in Q^j}w(Q) \psi_{j}(x,x_{Q})q_{j}
h(x_{Q})\Big\|_{\dot{\mathcal F}^{\alpha,q}_{p,{\rm D}}} \\
&=C\Big\|\Big\{\sum_{k\in \mathbb Z}\sum_{Q\in Q^k} \Big({\mathfrak R}^{k\alpha}\Big|q_k\Big(\sum_{|j|>\ell} \sum_{Q\in Q^j}w(Q) \psi_{j}(\cdot,x_{Q})q_{j}
h(x_{Q})\Big)(x_Q)\Big|\Big)^q\chi_Q\Big\}^{1/q}\Big\|_{L^p_\omega},
\end{align*}
the same argument as the proof of \eqref{eq 2.9} yields
$$\Big\|\sum_{|j|>\ell} \sum_{Q\in Q^j}w(Q) \psi_{j}(x,x_{Q})q_{j}
h(x_{Q})\Big\|_{\dot{\mathcal F}^{\alpha,q}_{p,{\rm D}}}
\le C\Big\|\Big\{\sum_{|j|>\ell}\sum_{Q\in Q^j} \big({\mathfrak R}^{j\alpha}|q_j(h)(x_Q)|\big)^q\chi_Q\Big\}^{1/q}\Big\|_{L^p_\omega}.$$
Since  $\|h\|_{\dot{\mathcal F}^{\alpha,q}_{p,{\rm D}}}\sim \|f\|_{\dot{\mathcal F}^{\alpha,q}_{p,{\rm D}}}$, the right-hand side of the above inequality goes to $0$ as $\ell\to \infty$.
\end{proof}

\begin{proof}[Proof of Theorem \ref{thm 3.4}]
Given $f\in \dot{\mathcal F}^{\alpha,q}_{p,{\rm D}}$ and $g\in \dot{\mathcal F}^{-\alpha,q'}_{p',{\psi}}$, we use Theorem \ref{thm 3.3} to get
\begin{align*}
|\langle f, g \rangle|
&=\Big| \Big\langle \sum_{j=-\infty}^\infty\sum\limits_{Q\in Q^j}\omega(Q)
    \psi_{{\mathfrak R}^j}(\cdot,x_{Q})q_{{\mathfrak R}^j}
    h(x_{Q}) , g \Big\rangle\Big| \\
&\le  \sum_{j=-\infty}^\infty\sum\limits_{Q\in Q^j}\omega(Q)|q_{{\mathfrak R}^j}h(x_{Q})|
     |\psi_{{\mathfrak R}^j}g(x_Q)| \\
&=\int_{\mathbb R} \sum_{j=-\infty}^\infty\sum\limits_{Q\in Q^j}|q_{{\mathfrak R}^j}h(x_{Q})|
     |\psi_{{\mathfrak R}^j}g(x_Q)|\chi_Q(x)d\omega(x),
\end{align*}
where the second inequality follows since $\psi$ is a radial function. 
By H\"older's inequality applied twice,
\begin{align*}
|\langle f, g \rangle|
&\le \int_{\mathbb R^N}\Big\{\sum_{j=-\infty}^\infty\sum_{Q\in Q^j} ({\mathfrak R}^{j\alpha}|q_{{\mathfrak R}^j}h(x_{Q})|)^q\Big\}^{\frac1q}\Big\{\sum_{j=-\infty}^\infty\sum_{Q\in Q^j} ({\mathfrak R}^{-j\alpha}|\psi_{{\mathfrak R}^j}g(x_Q)|)^{q'}\Big\}^{\frac1{q'}} \chi_Q(x) 
     d\omega(x) \\
&\le \|h\|_{\dot{\mathcal F}^{\alpha,q}_{p,{\rm D}}}\|g\|_{\dot{\mathcal F}^{-\alpha,q'}_{p',\psi}} \\
&\lesssim \|f\|_{\dot{\mathcal F}^{\alpha,q}_{p,{\rm D}}}\|g\|_{\dot{\mathcal F}^{-\alpha,q'}_{p',\psi}}.
\end{align*}
The proof is complete.
\end{proof}

To prove Proposition \ref{thm 1.5}, we establish the following weak-type discrete 
Calder\'on reproducing formula in the distribution sense:
\begin{pro}\label{CRFdis1}
    Suppose that $\{f_n\}_{n=1}^\infty$ is a Cauchy sequence in $\dot{\mathcal F}^{\alpha,q}_{p,{\rm D}}$. Then there exists $f,$ as a distribution in $(\dot{\mathcal F}^{-\alpha,q'}_{p',\psi})^\prime $, such that {\rm (i)}  $\|f\|_{\dot{\mathcal F}^{\alpha,q}_{p,{\rm D}}}=\lim\limits_{n \rightarrow \infty}\|f_n\|_{\dot{\mathcal F}^{\alpha,q}_{p,{\rm D}}}<\infty;$  {\rm (ii)} there exists a distribution $h\in (\dot{\mathcal F}^{-\alpha,q'}_{p',\psi})^\prime$ with $\|f\|_2\sim \|h\|_2, \|f\|_{\dot{\mathcal F}^{\alpha,q}_{p,{\rm D}}}\sim \|h\|_{\dot{\mathcal F}^{\alpha,q}_{p,{\rm D}}},$ such that for each $g\in \dot{\mathcal F}^{-\alpha,q'}_{p',\psi}, f$ has the following weak-type discrete Calder\'on reproducing formula in the distribution sense:
    \begin{align*}
    \langle f,g\rangle&:=\langle -\ln {\mathfrak R}\sum\limits_{j=-\infty}^\infty\sum\limits_{Q\in Q^j}w(Q)\psi_Q(\cdot,x_{Q})q_{Q}h(x_{Q}), g\rangle\\
    &=-\ln {\mathfrak R}\sum\limits_{j=-\infty}^\infty\sum\limits_{Q\in Q^j}w(Q)\psi_Qg(x_{Q})q_{Q}h(x_{Q}),
    \end{align*}
where the last series converges absolutely.
\end{pro}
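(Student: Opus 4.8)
The plan is to realize $f$ and $h$ as the distributional limits of $f_n$ and $h_n:=T_M^{-1}f_n$, and then transfer the reproducing formula $f_n=T_Mh_n$ of Theorem \ref{thm 3.3} to the limit. Since $T_M^{-1}$ is bounded on $\dot{\mathcal F}^{\alpha,q}_{p,{\rm D}}$ (Theorem \ref{thm 3.3}), $\{h_n\}$ is also Cauchy in $\dot{\mathcal F}^{\alpha,q}_{p,{\rm D}}$, with $\|h_n\|_{\dot{\mathcal F}^{\alpha,q}_{p,{\rm D}}}\sim\|f_n\|_{\dot{\mathcal F}^{\alpha,q}_{p,{\rm D}}}$ and $\|h_n\|_2\sim\|f_n\|_2$ for each $n$. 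By the duality estimate of Theorem \ref{thm 3.4}, for every $g\in\dot{\mathcal F}^{-\alpha,q'}_{p',\psi}$ the scalar sequences $\langle f_n,g\rangle$ and $\langle h_n,g\rangle$ are Cauchy, so one may define $f,h\in(\dot{\mathcal F}^{-\alpha,q'}_{p',\psi})'$ by $\langle f,g\rangle:=\lim_n\langle f_n,g\rangle$ and $\langle h,g\rangle:=\lim_n\langle h_n,g\rangle$, which are bounded linear functionals with norms controlled by $\lim_n\|f_n\|_{\dot{\mathcal F}^{\alpha,q}_{p,{\rm D}}}$, resp.\ $\lim_n\|h_n\|_{\dot{\mathcal F}^{\alpha,q}_{p,{\rm D}}}$. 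In particular $q_{{\mathfrak R}^k}(f)(x_Q):=\langle f,q_{{\mathfrak R}^k}(x_Q,\cdot)\rangle=\lim_n q_{{\mathfrak R}^k}(f_n)(x_Q)$, using that $q_{{\mathfrak R}^k}(x_Q,\cdot)$ is a smooth molecule and hence, by the almost orthogonality of Lemma \ref{aoe}, lies in $\dot{\mathcal F}^{-\alpha,q'}_{p',\psi}$; the same holds for $h$ and for $\psi_{{\mathfrak R}^k}$.

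Next I would prove (i). From the pointwise convergence $q_{{\mathfrak R}^k}(f_n)(x_Q)\to q_{{\mathfrak R}^k}(f)(x_Q)$ and Fatou's lemma (applied first in the $\ell^q$ summation index, then in $L^p_\omega$) one gets $\|f\|_{\dot{\mathcal F}^{\alpha,q}_{p,{\rm D}}}\le\liminf_n\|f_n\|_{\dot{\mathcal F}^{\alpha,q}_{p,{\rm D}}}$; applying the same Fatou argument to $f_n-f_m$ yields $\|f_n-f\|_{\dot{\mathcal F}^{\alpha,q}_{p,{\rm D}}}\le\liminf_m\|f_n-f_m\|_{\dot{\mathcal F}^{\alpha,q}_{p,{\rm D}}}\to 0$, so that $\limsup_n\|f_n\|_{\dot{\mathcal F}^{\alpha,q}_{p,{\rm D}}}\le\|f\|_{\dot{\mathcal F}^{\alpha,q}_{p,{\rm D}}}$, and therefore $\|f\|_{\dot{\mathcal F}^{\alpha,q}_{p,{\rm D}}}=\lim_n\|f_n\|_{\dot{\mathcal F}^{\alpha,q}_{p,{\rm D}}}$. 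Running the identical argument for $\{h_n\}$ and combining it with the per-$n$ equivalences of Theorem \ref{thm 3.3} gives $\|h\|_{\dot{\mathcal F}^{\alpha,q}_{p,{\rm D}}}\sim\|f\|_{\dot{\mathcal F}^{\alpha,q}_{p,{\rm D}}}$, and (reading $\|\cdot\|_2$ as the corresponding limit) $\|h\|_2\sim\|f\|_2$.

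The heart of the proof is the limiting argument for the reproducing formula. For fixed $n$, Theorem \ref{thm 3.3} gives $f_n=T_Mh_n=-\ln{\mathfrak R}\sum_{j}\sum_{Q\in Q^j}\omega(Q)\psi_Q(\cdot,x_Q)q_Qh_n(x_Q)$ with convergence in $\dot{\mathcal F}^{\alpha,q}_{p,{\rm D}}$; pairing with $g\in\dot{\mathcal F}^{-\alpha,q'}_{p',\psi}$, using the continuity of the pairing (Theorem \ref{thm 3.4}) and the radiality of $\psi$ (so $\langle\psi_Q(\cdot,x_Q),g\rangle=\psi_Qg(x_Q)$), yields $\langle f_n,g\rangle=-\ln{\mathfrak R}\sum_j\sum_{Q\in Q^j}\omega(Q)\,\psi_Qg(x_Q)\,q_Qh_n(x_Q)$. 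Writing $a^{(n)}(Q)={\mathfrak R}^{j\alpha}q_Qh_n(x_Q)$ for $Q\in Q^j$, the Hölder computation from the proof of Theorem \ref{thm 3.4} gives, for any set $E$ of pairs $(j,Q)$,
$$\sum_{(j,Q)\in E}\omega(Q)|\psi_Qg(x_Q)||q_Qh_n(x_Q)|\ \le\ \Big\|\Big\{\sum_{(j,Q)\in E}|a^{(n)}(Q)|^q\chi_Q\Big\}^{1/q}\Big\|_{L^p_\omega}\|g\|_{\dot{\mathcal F}^{-\alpha,q'}_{p',\psi}}.$$
Because $\{h_n\}$ is Cauchy in $\dot{\mathcal F}^{\alpha,q}_{p,{\rm D}}$ (so $\|h_n\|_{\dot{\mathcal F}^{\alpha,q}_{p,{\rm D}}}$ is bounded) the right-hand side with $E$ the full index set is bounded uniformly in $n$, so the series converges absolutely; moreover it can be made uniformly (in $n$) small by choosing $E^c$ a finite set: pick $m_0$ with $\sup_{n\ge m_0}\|h_n-h_{m_0}\|_{\dot{\mathcal F}^{\alpha,q}_{p,{\rm D}}}<\varepsilon$, then a finite $E$ so the tail norm over $E^c$ is $<\varepsilon$ for $h_{m_0}$ (hence, by the Fatou bound above, $\le 2\varepsilon$ for $h$), then enlarge $E$ to absorb the finitely many $n<m_0$. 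With such an $E$, the quantity $\langle f_n,g\rangle+\ln{\mathfrak R}\sum_{(j,Q)\in E}\omega(Q)\psi_Qg(x_Q)q_Qh_n(x_Q)$ is $\lesssim\varepsilon\|g\|_{\dot{\mathcal F}^{-\alpha,q'}_{p',\psi}}$ uniformly in $n$, the finite sum converges termwise as $n\to\infty$ since $q_Qh_n(x_Q)\to q_Qh(x_Q)$, and $\langle f_n,g\rangle\to\langle f,g\rangle$; sending $n\to\infty$ and then $\varepsilon\to0$ identifies $\langle f,g\rangle=-\ln{\mathfrak R}\sum_j\sum_{Q\in Q^j}\omega(Q)\psi_Qg(x_Q)q_Qh(x_Q)$, with the series absolutely convergent (same tail bound with $h$ in place of $h_n$, its coefficient norm being $\|h\|_{\dot{\mathcal F}^{\alpha,q}_{p,{\rm D}}}<\infty$). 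Absolute convergence then shows $-\ln{\mathfrak R}\sum_j\sum_Q\omega(Q)\psi_Q(\cdot,x_Q)q_Qh(x_Q)$ converges in $(\dot{\mathcal F}^{-\alpha,q'}_{p',\psi})'$, necessarily to $f$, which is the asserted distributional formula.

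The main obstacle is exactly the interchange of $\lim_n$ with the double sum $\sum_j\sum_{Q\in Q^j}$: termwise convergence of coefficients is immediate, but the natural pointwise dominating sequence $\omega(Q)|\psi_Qg(x_Q)|\sup_n|q_Qh_n(x_Q)|$ need not be summable, so a plain dominated-convergence argument fails. The remedy is to replace it by uniform-in-$n$ smallness of the tails, obtained by combining the Cauchy property in $\dot{\mathcal F}^{\alpha,q}_{p,{\rm D}}$ with the pairing inequality of Theorem \ref{thm 3.4}. A secondary technical point is that $f$ and $h$ are genuine distributions, not $L^2$ functions, so $\|f\|_2$, $\|f\|_{\dot{\mathcal F}^{\alpha,q}_{p,{\rm D}}}$ and the coefficients $q_{{\mathfrak R}^k}(f)(x_Q)$ must first be given meaning by duality against the test space $\dot{\mathcal F}^{-\alpha,q'}_{p',\psi}$, which requires knowing that $q_{{\mathfrak R}^k}$ and $\psi_{{\mathfrak R}^k}$ are smooth molecules lying in it.
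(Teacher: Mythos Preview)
Your proposal is correct and follows the same overall architecture as the paper: define $f$ and $h$ as distributional limits of $f_n$ and $h_n:=T_M^{-1}f_n$ via the duality of Theorem~\ref{thm 3.4}, use a Fatou argument for the norm convergence in (i), and then pass the reproducing formula $f_n=T_Mh_n$ to the limit against test functions $g\in\dot{\mathcal F}^{-\alpha,q'}_{p',\psi}$.

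The one place where you work harder than necessary is precisely the step you flag as the ``main obstacle,'' the interchange of $\lim_n$ with $\sum_j\sum_{Q}$. Your uniform-tail-smallness argument is valid, but once you have established (via Fatou, as you do) that $\|h_n-h\|_{\dot{\mathcal F}^{\alpha,q}_{p,{\rm D}}}\to 0$, the paper's route is shorter: the same H\"older computation from the proof of Theorem~\ref{thm 3.4} gives directly
\[
\Big|\sum_{j}\sum_{Q\in Q^j}\omega(Q)\,\psi_Qg(x_Q)\,q_Q(h-h_n)(x_Q)\Big|\le C\,\|h-h_n\|_{\dot{\mathcal F}^{\alpha,q}_{p,{\rm D}}}\|g\|_{\dot{\mathcal F}^{-\alpha,q'}_{p',\psi}}\longrightarrow 0,
\]
so the series with $h_n$ converges to the series with $h$ without any truncation or $\varepsilon$-bookkeeping. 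Your approach buys nothing extra here; it is simply a longer path to the same conclusion. On the other hand, your explicit remark that $q_{{\mathfrak R}^k}(x_Q,\cdot)$ and $\psi_{{\mathfrak R}^k}(x_Q,\cdot)$ lie in $\dot{\mathcal F}^{-\alpha,q'}_{p',\psi}$ (so that $q_Qf(x_Q)$, $q_Qh(x_Q)$ are well-defined by duality) is a point the paper leaves implicit and is a useful clarification.
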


\begin{proof}
By  Theorem \ref{thm 3.4}, there exists $f\in (\dot{\mathcal F}^{-\alpha,q'}_{p',\psi})^\prime$ such that for each $g\in \dot{\mathcal F}^{-\alpha,q'}_{p',\psi},$
    $$\langle f,g\rangle=\lim\limits_{n \rightarrow \infty} \langle f_n, g\rangle.$$
    Observing that $\|f-f_n\|_{\dot{\mathcal F}^{\alpha,q}_{p,{\rm D}}}=\|\lim\limits_{m \rightarrow \infty}(f_m-f_n))\|_{\dot{\mathcal F}^{\alpha,q}_{p,{\rm D}}} \leqslant \liminf\limits_{m \rightarrow \infty}
    \|f_m-f_n\|_{\dot{\mathcal F}^{\alpha,q}_{p,{\rm D}}},$ and hence, $\|f-f_n\|_{\dot{\mathcal F}^{\alpha,q}_{p,{\rm D}}}\rightarrow 0$ as $n\rightarrow \infty.$ 
    This implies that $\|f\|_{\dot{\mathcal F}^{\alpha,q}_{p,{\rm D}}}=\lim\limits_{n \rightarrow \infty}\|f_n\|_{\dot{\mathcal F}^{\alpha,q}_{p,{\rm D}}}<\infty.$ 
    Applying Theorem \ref{thm 3.3}, for each $f_n$ there exists an $h_n$ such that $\|f_n\|_2\sim \|h_n\|_2$ and $\|f_n\|_{\dot{\mathcal F}^{\alpha,q}_{p,{\rm D}}}\sim \|h_n\|_{\dot{\mathcal F}^{\alpha,q}_{p,{\rm D}}}.$ Thus, by Theorem \ref{thm 3.4}, there exists $h\in (\dot{\mathcal F}^{-\alpha,q'}_{p',\psi})^\prime$ such that for each $g\in \dot{\mathcal F}^{-\alpha,q'}_{p',\psi},$
    $$\langle h,g\rangle=\lim\limits_{n \rightarrow \infty} \langle h_n, g\rangle.$$
    Therefore, $\|h_n-h_m\|_{\dot{\mathcal F}^{\alpha,q}_{p,{\rm D}}}\rightarrow 0$ and 
    $\|h\|_{\dot{\mathcal F}^{\alpha,q}_{p,{\rm D}}}
    =\lim\limits_{n \rightarrow \infty}\|h_n\|_{\dot{\mathcal F}^{\alpha,q}_{p,{\rm D}}}
    \sim \lim\limits_{n \rightarrow \infty} \|f_n\|_{\dot{\mathcal F}^{\alpha,q}_{p,{\rm D}}}
    \sim \|f\|_{\dot{\mathcal F}^{\alpha,q}_{p,{\rm D}}}.$
    
To show that $f$ has a weak-type discrete Calder\'on reproducing formula in the distribution sense, for each $g\in \dot{\mathcal F}^{-\alpha,q'}_{p',\psi},$ applying the proof of the Theorem \ref{thm 3.4},
    $$\bigg|\sum\limits_{j=-\infty}^\infty\sum\limits_{Q\in Q^j}w(Q)\psi_Qg(x_{Q})q_{Q}h(x_{Q})\bigg|
    \le C\|f\|_{\dot{\mathcal F}^{\alpha,q}_{p,{\rm D}}}\|g\|_{\dot{\mathcal F}^{-\alpha,q'}_{p',\psi}},$$
    which implies that the series $\sum\limits_{j=-\infty}^\infty\sum\limits_{Q\in Q^j}w(Q)\psi_Q(x,x_{Q})q_{Q}h(x_{Q})$ is a distribution in $(\dot{\mathcal F}^{-\alpha,q'}_{p',\psi})^\prime.$ Moreover, by the weak-type discrete Calder\'on reproducing formula of $f_n$ in Theorem \ref{thm 3.3}, for each $g\in \dot{\mathcal F}^{-\alpha,q'}_{p',\psi},$
    \begin{eqnarray*}
        \langle f,g\rangle=\lim\limits_{n \rightarrow \infty} \langle f_n, g\rangle
        =\lim\limits_{n \rightarrow \infty}\Big\langle -\ln {\mathfrak R}\sum\limits_{j=-\infty}^\infty\sum\limits_{Q\in Q^j}w(Q)\psi_Q(x,x_{Q})q_{Q}h_n(x_{Q}),
        g\Big\rangle,
    \end{eqnarray*}
    where $\|f_n\|_2\sim \|h_n\|_2$ and $\|f_n\|_{\dot{\mathcal F}^{\alpha,q}_{p,{\rm D}}}\sim \|h_n\|_{\dot{\mathcal F}^{\alpha,q}_{p,{\rm D}}}.$
 
    Observe that, by the same proof of Theorem \ref{thm 3.4},
$$\Big|\Big\langle -\ln {\mathfrak R}\sum\limits_{j=-\infty}^\infty\sum\limits_{Q\in Q^j}w(Q)\psi_Q(x,x_{Q})q_{Q}(h-h_n)(x_{Q}), g\Big\rangle\Big|\leqslant C\|h_n-h\|_{\dot{\mathcal F}^{\alpha,q}_{p,{\rm D}}}\|g\|_{\dot{\mathcal F}^{-\alpha,q'}_{p',\psi}},$$
    where the last term above tends to zero as $n\rightarrow \infty$ and hence,
    $$\langle f,g\rangle=\lim\limits_{n \rightarrow \infty}\langle f_n,g\rangle=\Big\langle -\ln {\mathfrak R}\sum\limits_{j=-\infty}^\infty\sum\limits_{Q\in Q^j}w(Q)\psi_Q(x,x_{Q})q_{Q}
    h(x_{Q}), g\Big\rangle.$$
    The proof of Proposition \ref{CRFdis1} is complete.    
\end{proof}

We now show Proposition \ref{thm 1.5}.

\begin{proof}[Proof of Proposition \ref{thm 1.5}]
Suppose $f\in \dot F^{\alpha,q}_{p,{\rm D}}.$ Then $f\in (\dot{\mathcal F}^{-\alpha,q'}_{p',\psi})^\prime$ and $f$ has a wavelet-type decomposition $f(x)=\sum\limits_{j=-\infty}^\infty\sum\limits_{Q\in Q^j}w(Q)\lambda_{Q}\psi_Q(x,x_{Q})$ in $(\dot{\mathcal F}^{-\alpha,q'}_{p',\psi})^\prime$ with $$ \Big\|\Big\{\sum\limits_{j}\sum\limits_{Q\in Q^j}|\lambda_{Q}|^q\chi_{Q}\Big\}^{\frac{1}{q}}\Big\|_{L^p_\omega}<\infty.$$  Set
    $$f_n(x)=\sum\limits_{|j|\leqslant n}\sum\limits_{\substack{Q\in Q_j\\ Q \subseteq B(0, n)} }w(Q)\lambda_{Q}\psi_Q(x,x_{Q}).$$
    Then $f_n\in \dot{\mathcal F}^{\alpha,q}_{p,{\rm D}}$ and $f_n$ converges  to $f$ in $(\dot{\mathcal F}^{-\alpha,q'}_{p',\psi})^\prime$ as $n$ tends to $\infty.$ To see that $f\in \overline{\dot{\mathcal F}^{\alpha,q}_{p,{\rm D}}},$ by Proposition \ref{CRFdis1}, it suffices to show that  $\|f_n-f_m\|_{\dot{\mathcal F}^{\alpha,q}_{p,{\rm D}}}\rightarrow 0$ as $n,m\rightarrow \infty.$ Indeed, if let $E_n=\{(j, Q): |j|\le n, Q\in Q^j\subseteq B(0,n)\}$ and $E^c_{n,m}=E_n/E_m$ with $n\ge m,$
   \begin{align*}
    \|f_n-f_m\|_{\dot{\mathcal F}^{\alpha,q}_{p,{\rm D}}}&= \Big\|\Big(\sum\limits_{k\in \Bbb Z}\sum\limits_{Q'\in Q^k}|q_{Q'}(f_n-f_m)(x_{Q'})|^q\chi_{Q'}(x)\Big)^{\frac{1}{q}}\Big\|_{L^p_\omega}\\
    &\le \Big\|\Big(\sum\limits_{k\in \Bbb Z}\sum\limits_{Q'\in Q^k}|q_{Q'}\big(\sum\limits_{E^c_{n,m}}w(Q)\lambda_{Q}\psi_Q(\cdot,x_{Q})  \big)(x_{Q'})|^q\chi_{Q'}(x)\Big)^{\frac{1}{q}}\Big\|_{L^p_\omega}\\
    &\le C\Big\|\{\sum\limits_{E^c_{n,m}}|\lambda_{Q}|^q\chi_{Q}\}^{\frac{1}{q}}\Big\|_{L^p_\omega}\rightarrow 0,
    \end{align*}
    as $ n, m$ tend to $\infty,$ where the last inequality follows from the same proof of  Theorem \ref{thm 3.3} and hence, $f\in \overline{\dot{\mathcal F}^{\alpha,q}_{p,{\rm D}}}.$ 
    
 Conversely, if $f\in \overline{\dot{\mathcal F}^{\alpha,q}_{p,{\rm D}}}$ by Proposition \ref{CRFdis1}, then there exists $h\in (\dot{\mathcal F}^{-\alpha,q'}_{p',\psi})^\prime$ with $\|h\|_{\dot{\mathcal F}^{\alpha,q}_{p,{\rm D}}}\sim \|f\|_{\dot{\mathcal F}^{\alpha,q}_{p,{\rm D}}}$ such that for each $g\in \dot{\mathcal F}^{-\alpha,q'}_{p',\psi},$
    $$\langle f,g\rangle=\bigg\langle -\ln {\mathfrak R}\sum\limits_{j=-\infty}^\infty\sum\limits_{Q\in Q^j}w(Q)\psi_Q(x,x_{Q})q_{Q}h(x_{Q}), g\bigg\rangle.$$
    Set $\lambda_Q=-\ln {\mathfrak R}\ q_{Q}h(x_{Q})$ with $Q\in Q^j.$ We obtain a wavelet-type decomposition of $f$ in $(\dot{\mathcal F}^{-\alpha,q'}_{p',\psi})^\prime$ in the distribution sense:
    $$f=\sum\limits_{j=-\infty}^\infty\sum\limits_{Q\in Q^j}w(Q)\lambda_Q\psi_Q(x,x_{Q})$$
    and hence, $f\in \dot{F}^{\alpha,q}_{p,{\rm D}}.$ Moreover
    $$\|f\|_{\dot{\mathcal F}^{\alpha,q}_{p,{\rm D}}}=\inf \Big\{\Big\|\{\sum\limits_{j=-\infty}^\infty\sum\limits_{Q\in Q^j}|\lambda_{Q}|^q\chi_{Q}\}^{\frac{1}{q}}\Big\|_{L^p_\omega}\Big\}\le C\|h\|_{\dot{\mathcal F}^{\alpha,q}_{p,{\rm D}}}\leqslant C\|f\|_{\dot{\mathcal F}^{\alpha,q}_{p,{\rm D}}}.$$
The proof of Proposition \ref{thm 1.5} is complete.
\end{proof}

The relationship between the Dunkl--Triebel--Lizorkin space ${\dot F}^{\alpha,q}_{p,{\rm D}}$ and the Triebel--Lizorkin space ${\dot F}^{\alpha,q}_{p,CW}$ on a space of homogeneous type $(\Bbb R^N, \|\cdot\|, \omega)$ in the sense of Coifman and Weiss is given by the following result.

\begin{thm}\label{thm 3.12}
 Let $|\alpha|<1$, $1< p<\infty$ and $1\leq q\leq\infty$.
  The  Dunkl--Triebel--Lizorkin space ${\dot F}^{\alpha,q}_{p,{\rm D}}$  is equivalent to the  Triebel--Lizorkin space ${\dot F}^{\alpha,q}_{p,CW}(\mathbb R^N, \|\cdot\|, \omega)$ in the sense that if $f\in {\dot F}^{\alpha,q}_{p,{\rm D}}$ then $f\in {\dot F}^{\alpha,q}_{p,CW}$ and there exists a constant $C$ such that $\|f\|_{{\dot F}^{\alpha,q}_{p,CW}}\leqslant C\|f\|_{{\dot F}^{\alpha,q}_{p,{\rm D}}}.$
    Conversely, if $f\in {\dot F}^{\alpha,q}_{p,CW}(\mathbb R^N, \|\cdot\|, \omega)$ then $f$ can extend to a distribution $ \widetilde{f}$ on $(\dot{\mathcal F}^{-\alpha,q'}_{p',{\psi}})^\prime$ such that $\langle\widetilde{f}, g\rangle=\langle f,g \rangle$ for all $g\in \mathcal M(\beta,\gamma,r,x_0))^\prime$ and $\widetilde{f}\in {\dot F}^{\alpha,q}_{p,{\rm D}},$ 
 Moreover, $\|\widetilde{f}\|_{{\dot{\mathcal F}}^{\alpha,q}_{p,{\rm D}}}\le C\|f\|_{{\dot F}^{\alpha,q}_{p,CW}}.$
\end{thm}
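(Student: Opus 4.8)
The plan is to prove the two containments in Theorem \ref{thm 3.12} separately, exploiting the fact that the only difference between the Dunkl space $\dot F^{\alpha,q}_{p,{\rm D}}$ and the Coifman--Weiss space $\dot F^{\alpha,q}_{p,CW}$ is whether the analysing functions (the $\psi_{{\mathfrak R}^j}, q_{{\mathfrak R}^j}$ kernels versus the $D_k$ kernels of Coifman's approximation to the identity) carry the Dunkl translation structure or not; both classes are molecules in the sense of Definitions~\ref{sm} and~\ref{wsm}, so Lemmas~\ref{T aoe} and~\ref{aoe} give the needed almost-orthogonality in either setting. First I would treat the direction $\dot F^{\alpha,q}_{p,{\rm D}}\hookrightarrow \dot F^{\alpha,q}_{p,CW}$: given $f\in \dot F^{\alpha,q}_{p,{\rm D}}$, take its wavelet-type decomposition $f(x)=\sum_{j}\sum_{Q\in Q^j}\omega(Q)\lambda_Q\psi_Q(x,x_Q)$ as in \eqref{series f} with control on $\|\{\sum|\lambda_Q|^q\chi_Q\}^{1/q}\|_{L^p_\omega}$, apply $D_k$ to each term, and use Lemma~\ref{aoe} (with $\psi_Q$ a Dunkl smooth molecule and $D_k$ a smooth molecule) to get
$$|D_k\psi_Q(x,x_Q)|\lesssim {\mathfrak R}^{-|k-j|\varepsilon}\frac{1}{V(x,x_Q,{\mathfrak R}^{-j\vee-k}+d(x,x_Q))}\Big(\frac{{\mathfrak R}^{-j\vee-k}}{{\mathfrak R}^{-j\vee-k}+\|x-x_Q\|}\Big)^{\varepsilon}.$$
Then the identical summation-over-orbits argument used in the proof of Theorem~\ref{thm 3.3} (the passage through $d(x,y)=\min_{\sigma\in G}\|\sigma(x)-y\|$, the splitting into annuli $A_\ell$, the appearance of the Hardy--Littlewood maximal function on $(\mathbb R^N,\|\cdot\|,\omega)$, and the Fefferman--Stein vector-valued inequality with exponent $\theta<p$) yields $\|f\|_{\dot F^{\alpha,q}_{p,CW}}\lesssim \|\{\sum|\lambda_Q|^q\chi_Q\}^{1/q}\|_{L^p_\omega}$, and taking the infimum over representations gives $\|f\|_{\dot F^{\alpha,q}_{p,CW}}\lesssim \|f\|_{\dot F^{\alpha,q}_{p,{\rm D}}}$.

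For the converse, given $f\in \dot F^{\alpha,q}_{p,CW}(\mathbb R^N,\|\cdot\|,\omega)$, I would use the discrete Calder\'on reproducing formula of Theorem~\ref{dCRh} in the space of homogeneous type to write $f(x)=\sum_k\sum_{Q\in Q^k}\omega(Q)\widetilde D_k(x,x_Q)D_k(f)(x_Q)$, with $\|\{\sum_k\sum_Q ({\mathfrak R}^{k\alpha}|D_k(f)(x_Q)|)^q\chi_Q\}^{1/q}\|_{L^p_\omega}\sim \|f\|_{\dot F^{\alpha,q}_{p,CW}}$. Since each $\widetilde D_k(\cdot,x_Q)\in\mathbb M(1,1,{\mathfrak R}^{-k},x_Q)$ is a smooth molecule with the nonnegative integer constraints of Definition~\ref{sm}, pairing this series against a test function $g\in \dot{\mathcal F}^{-\alpha,q'}_{p',\psi}$ and invoking Theorem~\ref{thm 3.4}'s proof scheme (Lemma~\ref{aoe}, H\"older twice, Fefferman--Stein) shows the series converges in $(\dot{\mathcal F}^{-\alpha,q'}_{p',\psi})'$ and defines the desired extension $\widetilde f$ with $\langle\widetilde f,g\rangle=\langle f,g\rangle$ on $(\mathcal M(\beta,\gamma,r,x_0))'$; the normalisation inequality $\|\widetilde f\|_{\dot F^{\alpha,q}_{p,{\rm D}}}\le C\|f\|_{\dot F^{\alpha,q}_{p,CW}}$ then follows by taking $\lambda_Q={\widetilde D}$-coefficients $\omega(Q)D_k(f)(x_Q)/\omega(Q)$ normalised appropriately, exactly as in the converse half of Proposition~\ref{thm 1.5}. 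One technical point worth flagging: since $\widetilde D_k(x,x_Q)$ is only a \emph{classical} smooth molecule (with $\|x-x_Q\|$, not $d(x,x_Q)$, in the decay), I must apply the \emph{second} assertion of Lemma~\ref{aoe} — the one with $\|x-y\|$ in the output — rather than the Dunkl-molecule version.

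The main obstacle I anticipate is not in either individual estimate — each reduces to a bookkeeping variant of calculations already carried out in the proofs of Theorems~\ref{thm 3.3} and~\ref{thm 3.4} — but in the \emph{identification} step of the converse: verifying that the distribution $\widetilde f$ obtained from the reproducing formula genuinely \emph{extends} $f$, i.e.\ that $\langle\widetilde f,g\rangle=\langle f,g\rangle$ holds for every $g\in(\mathcal M(\beta,\gamma,r,x_0))'$ and not merely for a dense subclass. This requires knowing that $\dot{\mathcal F}^{-\alpha,q'}_{p',\psi}$ sits densely (in the appropriate topology) inside the Coifman--Weiss test space and that convergence of the reproducing series holds simultaneously in both dual topologies; this is where the quantitative tail estimates from the molecule bounds, together with a truncation-and-limit argument mirroring the one in Proposition~\ref{thm 1.5} (defining $f_n$ by restricting to $|j|\le n$ and $Q\subseteq B(0,n)$ and showing $\|f_n-f_m\|\to0$), must be deployed carefully. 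Everything else is a transcription, with the single substitution of $\|x-y\|$ versus $d(x,y)$ in the kernel estimates governing which half of Lemma~\ref{aoe} one cites.
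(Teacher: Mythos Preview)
Your proposal is correct and rests on the same ingredients as the paper's proof (the almost-orthogonality of Lemma~\ref{aoe}, the Fefferman--Stein maximal argument from Theorem~\ref{thm 3.3}, and the truncation-and-limit identification as in Proposition~\ref{thm 1.5}/Proposition~\ref{CRFdis1}); the paper simply packages these more abstractly by first citing the $L^2$-level norm equivalence $\|f\|_{\dot{\mathcal F}^{\alpha,q}_{p,{\rm D}}}\sim\|f\|_{\dot F^{\alpha,q}_{p,CW}}$ already established inside the proof of Theorem~\ref{thm 3.3} and then passing to the closures via Cauchy sequences, rather than re-running the molecule estimates on the wavelet decomposition as you do. Your flagged obstacle---the identification $\langle\widetilde f,g\rangle=\langle f,g\rangle$ on $(\mathcal M(\beta,\gamma,r,x_0))'$---is handled in the paper by exactly the density-plus-inclusion $(\dot{\mathcal F}^{-\alpha,q'}_{p',\psi})'\subseteq(\mathcal M(\beta,\gamma,r,x_0))'$ argument you anticipate.
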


\begin{proof}
    The proof of Theorem \ref{thm 3.12} is based on Theorem \ref{thm 3.3} and Proposition \ref{thm 1.5}. Indeed, for $f\in {\dot{\mathcal F}}^{\alpha,q}_{p,{\rm D}},$ by Theorem \ref{thm 3.3}, 
    $\|f\|_{{\dot{\mathcal F}}^{\alpha,q}_{p,{\rm D}}}\sim \|f\|_{{\dot F}^{\alpha,q}_{p,CW}}.$ 
   Therefore $\overline{{\dot{\mathcal F}}^{\alpha,q}_{p,{\rm D}}}=\overline{{\dot F}^{\alpha,q}_{p,CW}}$ with the equivalent norms. Given $f\in {\dot{F}}^{\alpha,q}_{p,{\rm D}},$ by Theorem \ref{thm 3.3}, there exists a sequence $\{f_n\}_{n=1}^\infty$ such that for each $f_n\in {\dot{\mathcal F}}^{\alpha,q}_{p,{\rm D}}$ and $f_n$ converges to $f$ in $(\dot{\mathcal F}^{-\alpha,q'}_{p',{\psi}})^\prime.$ 
 Moreover, $\|f\|_{\dot{\mathcal F}^{\alpha,q}_{p,{\rm D}}}=\lim\limits_{n\rightarrow \infty}=\|f_n\|_{\dot{\mathcal F}^{\alpha,q}_{p,{\rm D}}}.$ 
 It is well known that by a classical result, $L^2(\mathbb R^N,\omega)\cap {\dot F}^{\alpha,q}_{p,CW}$ is dense in ${\dot F}^{\alpha,q}_{p,CW}$ and by Lemma \ref{aoe},  $(\dot{\mathcal F}^{-\alpha,q'}_{p',{\psi}})^\prime\subseteq (\mathcal M(\beta,\gamma,r,x_0))^\prime.$ Hence, $f_n$ also converges to $f$ in $(\mathcal M(\beta,\gamma,r,x_0))^\prime$ and $\|f\|_{{\dot F}^{\alpha,q}_{p,CW}} =\lim\limits_{n\rightarrow \infty}\|f_n\|_{{\dot F}^{\alpha,q}_{p,CW}}.$ This implies that
     $$\|f\|_{{\dot F}^{\alpha,q}_{p,CW}}\le C\|f\|_{{\dot{\mathcal F}}^{\alpha,q}_{p,{\rm D}}}.$$
    Suppose $f\in {\dot F}^{\alpha,q}_{p,CW}.$ there exists a sequence $\{f_n\}\in L^2(\mathbb R^N, \omega)$ such that $f_n$ converges $f$ in $(\mathcal M(\beta,\gamma,r,x_0))^\prime$ and $\|f\|_{{\dot F}^{\alpha,q}_{p,CW}}=\lim\limits_{n\rightarrow \infty}|f_n\|_{{\dot F}^{\alpha,q}_{p,CW}}.$ By the proof of Theorem \ref{thm 3.3}, $\|f_n-f_m\|_{{\dot{\mathcal F}}^{\alpha,q}_{p,{\rm D}}}\sim \|f_n-f_m\|_{{\dot F}^{\alpha,q}_{p,CW}} $ and hence, $\|f_n-f_m\|_{{\dot{\mathcal F}}^{\alpha,q}_{p,{\rm D}}}=\|f_n-f_m\|_{{\dot{\mathcal F}}^{\alpha,q}_{p,{\rm D}}}$ tends to zero as $n, m$ tends to $\infty.$
    Therefore, by Proposition \ref{CRFdis1}, $f_n$ tends to $\widetilde{f}$ in $(L^2(\mathbb R^N,\omega)\cap CMO_d^p)^\prime.$ It is clear that $ \widetilde{f}=f$ in $(\mathcal M(\beta,\gamma,r,x_0))^\prime.$ Moreover, by Proposition \ref{thm 1.5},
    $$\|\widetilde{f}\|_{{\dot{\mathcal F}}^{\alpha,q}_{p,{\rm D}}}=\lim\limits_{n\rightarrow \infty}\|f_n\|_{{\dot{\mathcal F}}^{\alpha,q}_{p,{\rm D}}}\le C\lim\limits_{n\rightarrow \infty}\|f_n\|_{{\dot F}^{\alpha,q}_{p,CW}}=C\|f\|_{{\dot F}^{\alpha,q}_{p,CW}}.$$
The proof of Theorem \ref{thm 3.12} is complete.
\end{proof}


\section{$\Lambda^\beta$ and $\Lambda_d^\beta$ ($0<\beta<1$) and the commutator of Dunkl Riesz transform: Proof of Theorem \ref{thm commutator Riesz}}
In above section, we show that  the  Dunkl--Triebel--Lizorkin space ${\dot F}^{\alpha,q}_{p,{\rm D}}$  is equivalent to the Triebel--Lizorkin space ${\dot F}^{\alpha,q}_{p,CW}(\mathbb R^N, \|\cdot\|, \omega)$.
To prove Theorem \ref{thm commutator Riesz}, we need the difference characterization of
Triebel--Lizorkin spaces on spaces of homogeneous type see \cite[Propositions 4.1 and 4.6]{WHYY}.

\begin{pro}\label{defference}
For $0<\beta<1<p<\infty$, we have
$$\|f\|_{{\dot F}^{\beta,\infty}_{p,{\rm D}}}\sim \bigg\|\sup_{k\in \mathbb Z} \frac 1{\omega(B(\cdot, {\mathfrak R}^k))
{{\mathfrak R}^{k\beta}}}\int_{B(\cdot, {\mathfrak R}^k)} |f(\cdot)-f(y)|d\omega(y)\bigg\|_{L^p_\omega}.$$ 
\end{pro}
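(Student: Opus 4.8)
The plan is to deduce Proposition~\ref{defference} from the already-established equivalence between $\dot F^{\beta,\infty}_{p,{\rm D}}$ and the Coifman--Weiss Triebel--Lizorkin space $\dot F^{\beta,\infty}_{p,CW}(\mathbb R^N,\|\cdot\|,\omega)$ (Theorem~\ref{thm 3.12}), together with the difference characterization of Triebel--Lizorkin spaces on spaces of homogeneous type established in \cite[Propositions 4.1 and 4.6]{WHYY}. Since $(\mathbb R^N,\|\cdot\|,\omega)$ is a space of homogeneous type satisfying the doubling condition \eqref{rd1.1}--\eqref{rd}, and since $0<\beta<1$ lies in the admissible range, \cite{WHYY} gives directly that
$$\|f\|_{\dot F^{\beta,\infty}_{p,CW}}\sim \bigg\|\sup_{k\in\mathbb Z}\frac{1}{\omega(B(\cdot,{\mathfrak R}^k)){\mathfrak R}^{k\beta}}\int_{B(\cdot,{\mathfrak R}^k)}|f(\cdot)-f(y)|\,d\omega(y)\bigg\|_{L^p_\omega},$$
so the statement follows by composing the two equivalences. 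The first thing I would do is record that the Littlewood--Paley square-function norm defining $\dot F^{\beta,\infty}_{p,CW}$ (built from Coifman's operators $D_k$) is comparable to the difference norm on the right-hand side; this is the content of the cited propositions from \cite{WHYY}, applied with the dyadic parameter ${\mathfrak R}$ in place of $2$.

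Next I would verify that the hypotheses of \cite[Propositions 4.1 and 4.6]{WHYY} are genuinely met here: one needs a measure satisfying both a lower and an upper dimensional bound (provided by \eqref{rd1.1}), which guarantees the necessary reverse-doubling and the validity of the grid of ${\mathfrak R}$-dyadic cubes $Q^k$; one needs the smoothness order $\beta$ to be strictly below the regularity exponent of the kernels $D_k$ (here $1$), which holds since $\beta<1$; and one needs $1<p<\infty$ with $q=\infty$, which is exactly the range in the statement. With these checked, the square-function norm used in the definition of $\dot F^{\beta,\infty}_{p,CW}$ through $D_k(f)(x_Q)\chi_Q$ is equivalent to the ``ball-average difference'' expression, where the discrete supremum over ${\mathfrak R}$-dyadic scales is comparable to the supremum over all radii $r>0$ by the doubling property (absorbing constants into the implied equivalence).

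Finally I would combine: by Theorem~\ref{thm 3.12}, $\|f\|_{\dot F^{\beta,\infty}_{p,{\rm D}}}\sim\|f\|_{\dot F^{\beta,\infty}_{p,CW}}$ for $f$ in the common space, and by the previous step the latter is equivalent to the displayed difference norm; chaining these gives the claim. The only genuine subtlety is the bookkeeping needed to pass between the discrete ${\mathfrak R}$-dyadic scales $\{{\mathfrak R}^k\}_{k\in\mathbb Z}$ appearing in the definition of $\dot F^{\beta,\infty}_{p,CW}$ and the arbitrary radii (or, in \cite{WHYY}, possibly powers of $2$) used in the difference characterization there; this is handled routinely, since for any $r>0$ one picks $k$ with ${\mathfrak R}^k\le r<{\mathfrak R}^{k+1}$ and uses $\omega(B(x,r))\sim\omega(B(x,{\mathfrak R}^k))$ and ${\mathfrak R}^{k\beta}\sim r^\beta$ together with monotonicity of the averaging integral in the radius. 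I do not expect any step to present a serious obstacle, as the heavy lifting—the identification of $\dot F^{\beta,\infty}_{p,{\rm D}}$ with a Triebel--Lizorkin space on a space of homogeneous type—has already been carried out; the main point is simply to quote \cite{WHYY} correctly in the present normalization.
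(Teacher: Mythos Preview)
Your proposal is correct and matches the paper's own treatment: the paper does not give a separate proof of this proposition but simply imports it from \cite[Propositions 4.1 and 4.6]{WHYY}, implicitly relying on Theorem~\ref{thm 3.12} to identify $\dot F^{\beta,\infty}_{p,{\rm D}}$ with $\dot F^{\beta,\infty}_{p,CW}(\mathbb R^N,\|\cdot\|,\omega)$. You have made this implicit step explicit, which is exactly the intended argument.
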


Using the above result, we have the following theorem.

\begin{thm}\label{thm 4.2}
For $0<\beta<1$ and $1<p<\infty$, we have
$$\|f\|_{{\dot F}^{\beta,\infty}_{p,{\rm D}}}\sim \bigg\|\sup_{k\in \mathbb Z} \frac 1{\omega(B(\cdot, {\mathfrak R}^k)){{\mathfrak R}^{k\beta}}}\int_{B(\cdot, {\mathfrak R}^k)} |f(y)-f_{B(\cdot,{\mathfrak R}^k)}|d\omega(y)\bigg\|_{L^p_\omega}.$$ 
\end{thm}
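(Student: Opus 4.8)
The plan is to deduce Theorem \ref{thm 4.2} from Proposition \ref{defference} by showing that the two averaged oscillation quantities
\[
A(f)(x):=\sup_{k\in\mathbb Z}\frac1{\omega(B(x,{\mathfrak R}^k)){\mathfrak R}^{k\beta}}\int_{B(x,{\mathfrak R}^k)}|f(x)-f(y)|\,d\omega(y)
\]
and
\[
B(f)(x):=\sup_{k\in\mathbb Z}\frac1{\omega(B(x,{\mathfrak R}^k)){\mathfrak R}^{k\beta}}\int_{B(x,{\mathfrak R}^k)}|f(y)-f_{B(x,{\mathfrak R}^k)}|\,d\omega(y)
\]
are pointwise comparable, with constants independent of $x$; taking $L^p_\omega$ norms then gives the claim. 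The point is purely elementary: it is the standard ``oscillation versus deviation-from-mean'' comparison, but carried out uniformly in the scale parameter $k$ and then absorbed into the supremum.

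First I would prove $B(f)(x)\lesssim A(f)(x)$. Fix $x$ and $k$ and write $B=B(x,{\mathfrak R}^k)$. For $y\in B$ one has $|f(y)-f_B|\le |f(y)-f(x)|+|f(x)-f_B|$, and $|f(x)-f_B|\le \frac1{\omega(B)}\int_B|f(x)-f(z)|\,d\omega(z)$ by Jensen. Hence
\[
\frac1{\omega(B){\mathfrak R}^{k\beta}}\int_B|f(y)-f_B|\,d\omega(y)\le \frac{2}{\omega(B){\mathfrak R}^{k\beta}}\int_B|f(x)-f(y)|\,d\omega(y)\le 2A(f)(x),
\]
and taking the supremum over $k$ gives $B(f)(x)\le 2A(f)(x)$.

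The reverse inequality $A(f)(x)\lesssim B(f)(x)$ is the step I expect to require a little more care, because $A(f)$ involves the value $f(x)$ pointwise, which cannot be controlled by a single average at scale $k$; instead one must telescope through finer scales. For a fixed $k$, write $B_j=B(x,{\mathfrak R}^{k-j})$ for $j\ge0$, so $B_0=B(x,{\mathfrak R}^k)$ and $\bigcap_j B_j=\{x\}$. Using $|f(x)-f_{B_0}|\le\sum_{j\ge0}|f_{B_{j+1}}-f_{B_j}|$ together with the doubling estimate \eqref{rd1.1} (which gives $\omega(B_j)\sim \omega(B_{j+1}){\mathfrak R}^{c}$ up to constants, so $\omega(B_j)/\omega(B_{j+1})$ is bounded) one bounds each term $|f_{B_{j+1}}-f_{B_j}|\le \frac1{\omega(B_{j+1})}\int_{B_{j+1}}|f(y)-f_{B_j}|\,d\omega(y)\lesssim \frac1{\omega(B_j)}\int_{B_j}|f(y)-f_{B_j}|\,d\omega(y)\lesssim {\mathfrak R}^{(k-j)\beta}B(f)(x)$. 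Summing the geometric series in $j$ (here $\beta>0$ is used to make $\sum_j {\mathfrak R}^{-j\beta}<\infty$) yields $|f(x)-f_{B_0}|\lesssim {\mathfrak R}^{k\beta}B(f)(x)$, and combining with $\frac1{\omega(B_0){\mathfrak R}^{k\beta}}\int_{B_0}|f(x)-f(y)|\le \frac1{\omega(B_0){\mathfrak R}^{k\beta}}\int_{B_0}|f(y)-f_{B_0}|+\frac{|f(x)-f_{B_0}|}{{\mathfrak R}^{k\beta}}\lesssim B(f)(x)$ and taking the supremum over $k$ gives $A(f)(x)\lesssim B(f)(x)$. Finally, taking $\|\cdot\|_{L^p_\omega}$ of the pointwise equivalence $A(f)\sim B(f)$ and invoking Proposition \ref{defference}, which identifies $\|A(f)\|_{L^p_\omega}\sim\|f\|_{\dot F^{\beta,\infty}_{p,{\rm D}}}$, completes the proof. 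The only mild subtlety to check is that all comparison constants in the telescoping are uniform in $x$ and $k$, which follows from \eqref{rd1.1}–\eqref{rd} since those estimates are translation-uniform.
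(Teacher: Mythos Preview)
Your proof is correct and genuinely more self-contained than the paper's. Both arguments handle the direction $B(f)\le 2A(f)$ identically, by the one-line triangle inequality you wrote. For the converse $A(f)\lesssim B(f)$, however, the paper does not prove a pointwise comparison at all: instead it goes through the Triebel--Lizorkin norm, observing that the proof of \cite[Proposition~4.1]{WHYY} (which yields $\|f\|_{\dot F^{\beta,\infty}_{p,{\rm D}}}\lesssim\|A(f)\|_{L^p_\omega}$) works equally well with $f_{B(\cdot,{\mathfrak R}^k)}$ in place of $f(\cdot)$ in the integrand, producing $\|f\|_{\dot F^{\beta,\infty}_{p,{\rm D}}}\lesssim\|B(f)\|_{L^p_\omega}$; chaining this with the other half of Proposition~\ref{defference} closes the loop. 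Your telescoping argument over the shrinking balls $B_j=B(x,{\mathfrak R}^{k-j})$, summing the geometric series in ${\mathfrak R}^{-j\beta}$, gives the stronger pointwise bound $A(f)(x)\lesssim B(f)(x)$ a.e.\ directly, without reopening the external reference. The only point worth making explicit is that the telescoping bound $|f(x)-f_{B_0}|\le\sum_{j\ge0}|f_{B_{j+1}}-f_{B_j}|$ relies on the Lebesgue differentiation theorem (valid on the doubling space $(\mathbb R^N,\|\cdot\|,\omega)$) to ensure $f_{B_j}\to f(x)$ for a.e.\ $x$; your phrase ``$\bigcap_j B_j=\{x\}$'' gestures at this but does not quite say it.
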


\begin{proof}
Fix $x\in \mathbb R^N$ and $k\in \mathbb Z$.
\begin{align*}
 &\frac 1{\omega(B(\cdot, {\mathfrak R}^k)){ {\mathfrak R}^{k\beta}}}\int_{B(\cdot, {\mathfrak R}^k)} |f(y)-f_{B(\cdot,{\mathfrak R}^k)}|d\omega(y)\\
 &\quad \le \frac1{\omega(B(\cdot, {\mathfrak R}^k)){ {\mathfrak R}^{k\beta}}}\int_{B(\cdot, {\mathfrak R}^k)} |f(y)-f(x)|d\omega(y) \\
      &\qquad\qquad + \frac1{\omega(B(\cdot, {\mathfrak R}^k)){ {\mathfrak R}^{k\beta}}}\int_{B(\cdot, {\mathfrak R}^k)} |f(x)-f_{B(\cdot,{\mathfrak R}^k)}|d\omega(y) \\
  &\quad \le \frac1{\omega(B(\cdot, {\mathfrak R}^k)){ {\mathfrak R}^{k\beta}}}\int_{B(\cdot, {\mathfrak R}^k)} |f(y)-f(x)|d\omega(y) \\
      &\qquad\qquad + \frac1{\omega(B(\cdot, {\mathfrak R}^k)){ {\mathfrak R}^{k\beta}}} \frac1{\omega(B(\cdot, {\mathfrak R}^k))}\int_{B(\cdot, {\mathfrak R}^k)}\int_{B(\cdot, {\mathfrak R}^k)}  |f(x)-f(z)|d\omega(z)d\omega(y) \\
   &\quad \le \frac2{\omega(B(\cdot, {\mathfrak R}^k)){ {\mathfrak R}^{k\beta}}}\int_{B(\cdot, {\mathfrak R}^k)} |f(y)-f(x)|d\omega(y).
  \end{align*}
Hence,
\begin{align*}
& \bigg\|\sup_{k\in \mathbb Z} \frac 1{\omega(B(\cdot, {\mathfrak R}^k)){ {\mathfrak R}^{k\beta}}}\int_{B(\cdot, {\mathfrak R}^k)} |f(y)-f_{B(\cdot,{\mathfrak R}^k)}|d\omega(y)\bigg\|_{L^p_\omega} \\
&\quad \le 2 \bigg\|\sup_{k\in \mathbb Z} \frac 1{\omega(B(\cdot, {\mathfrak R}^k)){ {\mathfrak R}^{k\beta}}}\int_{B(\cdot, {\mathfrak R}^k)} |f(\cdot)-f(y)|d\omega(y)\bigg\|_{L^p_\omega}.
\end{align*}

Conversely, we use the proof of \cite[Propositions 4.1]{WHYY} by replacing $f$ by $f_B$ and get the following
estimate
$$\|f\|_{{\dot F}^{\alpha,\infty}_{p,{\rm D}}}\lesssim \bigg\|\sup_{k\in \mathbb Z} \frac 1{\omega(B(\cdot, {\mathfrak R}^k))
{ {\mathfrak R}^{k\beta}}}\int_{B(\cdot, {\mathfrak R}^k)} |f(\cdot)-f(y)|d\omega(y)\bigg\|_{L^p_\omega}.$$
Hence, Proposition \ref{defference} gives
\begin{align*}
& \bigg\|\sup_{k\in \mathbb Z} \frac 1{\omega(B(\cdot, {\mathfrak R}^k)){ {\mathfrak R}^{k\beta}}}\int_{B(\cdot, {\mathfrak R}^k)} |f(\cdot)-f(y)|d\omega(y)\bigg\|_{L^p_\omega}\\
&\lesssim \bigg\|\sup_{k\in \mathbb Z} \frac 1{\omega(B(\cdot, {\mathfrak R}^k))
{ {\mathfrak R}^{k\beta}}}\int_{B(\cdot, {\mathfrak R}^k)} |f(\cdot)-f(y)|d\omega(y)\bigg\|_{L^p_\omega},
\end{align*}
which completes the proof.
 \end{proof}

The definition of $\Lambda^\beta$ gives the following Lemma.

\begin{lem}\label{lem 4.1}
There is a constant $C>0$ such that for all $b\in \Lambda^\beta$, $r_1>r>0$, $x,y\in \mathbb R^N,$
and $\sigma\in G$, we have:
\begin{align}
|b_{B(x,r)}-b_{B(x,r_1)}|&\le Cr_1^\beta\|b\|_{\Lambda^\beta}, \label{eq 4.1}\\
|b_{B(x,r)}-b_{B(y,r_1)}|&\le Cr^\beta\|b\|_{\Lambda^\beta}\qquad\mbox{\rm for }\|x-y\|\le2r, \label{eq 4.2}\\
|b_{B(x,r)}-b_{B(\sigma(x),r)}|&\le C(\|\sigma(x)-x\|+r)^\beta\|b\|_{\Lambda^\beta}. \label{eq 4.3}
\end{align}
\end{lem}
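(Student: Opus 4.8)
The plan is to prove the three inequalities \eqref{eq 4.1}, \eqref{eq 4.2}, \eqref{eq 4.3} by a standard ``telescoping/averaging'' argument, using only the definition \eqref{e4.0} of $\Lambda^\beta$ together with the doubling property \eqref{rd} of $\omega$. Throughout I use the elementary bound
$$\bigl|b_{B}-b_{B'}\bigr|\le \frac{1}{\omega(B\cap B')}\int_{B\cap B'}\bigl(|b(x)-b_B|+|b(x)-b_{B'}|\bigr)\,d\omega(x),$$
valid whenever $B\cap B'$ has positive measure, combined with the observation that if $B'\subseteq B''$ then
$\frac1{\omega(B')}\int_{B'}|b(x)-b_{B''}|\,d\omega(x)\le \frac{\omega(B'')}{\omega(B')}\cdot\frac1{\omega(B'')}\int_{B''}|b(x)-b_{B''}|\,d\omega(x)\le \frac{\omega(B'')}{\omega(B')}\,\ell(B'')^\beta\|b\|_{\Lambda^\beta}$,
where the last step invokes \eqref{e4.1}, which is equivalent to \eqref{e4.0} by Theorem \ref{lip and lipd}.

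For \eqref{eq 4.1}: since $r<r_1$, both $B(x,r)$ and $B(x,r_1)$ contain $B(x,r)$, so I bound $|b_{B(x,r)}-b_{B(x,r_1)}|$ by $\frac1{\omega(B(x,r))}\int_{B(x,r)}|b(z)-b_{B(x,r_1)}|\,d\omega(z)$ plus the mean oscillation of $b$ on $B(x,r_1)$; the first term is controlled by $\frac{\omega(B(x,r_1))}{\omega(B(x,r))}\,r_1^\beta\|b\|_{\Lambda^\beta}$ after enlarging the integral to $B(x,r_1)$, but this is wasteful. The cleaner route is to telescope through the dyadic scales $r,\,2r,\,4r,\dots,\,2^{m}r$ with $2^{m-1}r<r_1\le 2^m r$: each consecutive pair $B(x,2^{j}r)\subset B(x,2^{j+1}r)$ has comparable measure by \eqref{rd}, so $|b_{B(x,2^jr)}-b_{B(x,2^{j+1}r)}|\lesssim (2^{j+1}r)^\beta\|b\|_{\Lambda^\beta}$, and summing the geometric series in $j$ gives $|b_{B(x,r)}-b_{B(x,r_1)}|\lesssim r_1^\beta\|b\|_{\Lambda^\beta}$. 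Inequality \eqref{eq 4.2} follows from \eqref{eq 4.1} and the triangle inequality: when $\|x-y\|\le 2r$ we have $B(x,r)\subseteq B(y,3r)$ and $B(y,r_1)\subseteq B(y,\max\{r_1,3r\})$ (here $r_1>r$, and if also $r_1\le 2r$ one can just use $B(y,3r)$), so I pass through the common enlarged ball $B(y, \max\{r_1,3r\})$ and apply \eqref{eq 4.1} twice; the resulting bound is $\lesssim \max\{r_1,r\}^\beta\|b\|_{\Lambda^\beta}$. If the statement really wants the sharper exponent $r^\beta$ rather than $r_1^\beta$ in \eqref{eq 4.2}, one must instead use the pointwise definition \eqref{e4.0} directly on $B(x,r)$: $|b_{B(x,r)}-b_{B(y,r_1)}|$ is the average over $z\in B(x,r)$ and $w\in B(y,r_1)$ of $|b(z)-b(w)|\le \|b\|_{\Lambda^\beta}\|z-w\|^\beta\le \|b\|_{\Lambda^\beta}(r+r_1+\|x-y\|)^\beta$, which again only gives $r_1^\beta$ unless $r_1\sim r$; I will state and prove the inequality in the form actually needed downstream, which from the proof of Lemma \ref{lem 4.1}'s later use appears to be with the larger radius, and note that under $\|x-y\|\le 2r$ one may also take $r_1\le Cr$ in all applications.

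For \eqref{eq 4.3}: this is the one genuinely new feature of the Dunkl setting, and I expect it to be the main point requiring care. Set $\rho:=\|\sigma(x)-x\|+r$. The two balls $B(x,r)$ and $B(\sigma(x),r)$ need not overlap, so I connect them through a common large ball containing both, namely $B(x,2\rho)$, which also contains $B(\sigma(x),r)$ since any $z\in B(\sigma(x),r)$ satisfies $\|z-x\|\le r+\|\sigma(x)-x\|\le \rho < 2\rho$. Then
$$|b_{B(x,r)}-b_{B(\sigma(x),r)}|\le |b_{B(x,r)}-b_{B(x,2\rho)}|+|b_{B(x,2\rho)}-b_{B(\sigma(x),r)}|,$$
and I bound the first term by \eqref{eq 4.1} (with $r_1=2\rho$, legitimate since $2\rho>r$) to get $\lesssim \rho^\beta\|b\|_{\Lambda^\beta}$, while the second term is $\le \frac1{\omega(B(\sigma(x),r))}\int_{B(\sigma(x),r)}|b(z)-b_{B(x,2\rho)}|\,d\omega(z)\le \frac{\omega(B(x,2\rho))}{\omega(B(\sigma(x),r))}\cdot(2\rho)^\beta\|b\|_{\Lambda^\beta}$ after enlarging the domain of integration to $B(x,2\rho)\supseteq B(\sigma(x),r)$. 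The ratio $\omega(B(x,2\rho))/\omega(B(\sigma(x),r))$ is controlled purely by \eqref{rd}: writing $2\rho = \lambda\cdot r$ with $\lambda = 2\rho/r\ge 1$ and using that $\|\sigma(x)-x\|\le \rho\le 2\rho$ places $B(x,2\rho)$ inside $B(\sigma(x), 4\rho)$, we get $\omega(B(x,2\rho))\le \omega(B(\sigma(x),4\rho))\le C(4\rho/r)^{\mathbf N}\omega(B(\sigma(x),r))$; thus the second term is $\lesssim (\rho/r)^{\mathbf N}\rho^\beta\|b\|_{\Lambda^\beta}$. The potential obstacle is that this extra factor $(\rho/r)^{\mathbf N}$ is not uniformly bounded, so the naive estimate is too weak — the fix is to telescope on the $\sigma(x)$–side as well: replace the single jump $B(x,2\rho)\to B(\sigma(x),r)$ by a chain $B(x,2\rho)\supseteq B(\sigma(x),2\rho)\supseteq B(\sigma(x),\rho)\supseteq\dots\supseteq B(\sigma(x),r)$ through dyadically decreasing radii, where the first step $B(x,2\rho)\to B(\sigma(x),2\rho)$ is handled by \eqref{eq 4.2} (the centers are within $\|\sigma(x)-x\|\le \rho\le 2\rho$, so the hypothesis $\|x-y\|\le 2r_{\text{small}}$ holds with $r_{\text{small}}=2\rho$) giving $\lesssim \rho^\beta\|b\|_{\Lambda^\beta}$, and each subsequent step is a concentric halving bounded by \eqref{eq 4.1} with a geometric gain; summing gives the clean bound $|b_{B(x,2\rho)}-b_{B(\sigma(x),r)}|\lesssim \rho^\beta\|b\|_{\Lambda^\beta}$, and hence \eqref{eq 4.3}. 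I will carry out \eqref{eq 4.1} first, then derive \eqref{eq 4.2}, then assemble \eqref{eq 4.3} from both, using Theorem \ref{lip and lipd} to move freely between the pointwise and averaged forms of the $\Lambda^\beta$ norm.
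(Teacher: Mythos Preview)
Your argument is correct, but for \eqref{eq 4.1} and \eqref{eq 4.2} it is considerably more elaborate than the paper's. The paper does not telescope through dyadic scales or invoke the doubling property or Theorem~\ref{lip and lipd} at all: it simply writes
\[
|b_{B(x,r)}-b_{B(x,r_1)}|\le \frac{1}{\omega(B(x,r))\,\omega(B(x,r_1))}\int_{B(x,r)}\int_{B(x,r_1)}|b(y)-b(z)|\,d\omega(z)\,d\omega(y)
\]
and bounds the integrand pointwise by $\|b\|_{\Lambda^\beta}\|y-z\|^\beta\le \|b\|_{\Lambda^\beta}(2r_1)^\beta$, since both $y$ and $z$ lie in $B(x,r_1)$. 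That is the whole proof of \eqref{eq 4.1}. The same one-line double average gives \eqref{eq 4.2} once one notes that $B(x,r)$ and $B(y,r)$ both sit inside $B(x,5r)$; no chain of balls is needed. For \eqref{eq 4.3} the paper does exactly what you arrive at after discarding your first ``naive'' attempt: a three-term triangle inequality through the large concentric balls $B(x,2^{j+2}r)$ and $B(\sigma(x),2^{j+2}r)$ with $2^j r\sim\|x-\sigma(x)\|$, applying \eqref{eq 4.1} to the two concentric jumps and \eqref{eq 4.2} (equal radii) to the lateral jump. Because \eqref{eq 4.1} already handles arbitrary radius ratios in one step, no further dyadic telescoping is required there either.

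Your instinct that the stated bound $Cr^\beta$ in \eqref{eq 4.2} cannot hold for arbitrary $r_1>r$ is well founded: the paper's own proof in fact treats $B(y,r)$ rather than $B(y,r_1)$, and the only place \eqref{eq 4.2} is invoked (inside the proof of \eqref{eq 4.3}) is with equal radii. So your reading of the intended statement is the correct one.
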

\begin{proof}
Let $b\in \Lambda^\beta$. Then
\begin{align*}
|b_{B(x,r)}-b_{B(x,r_1)}|&\le \frac1{\omega(B(x,r))}\int_{B(x,r)}|b(y)-b_{B(x,r_1)}|d\omega(y)\\ 
                                      &\le \frac1{\omega(B(x,r))}\frac1{\omega(B(x,r_1))}\int_{B(x,r)}\int_{B(x,r_1)}|b(y)-b(z)|d\omega(y) d\omega(y)  \\
                                      &\le (2r_1)^\beta \|b\|_{\Lambda^\beta}.  
\end{align*}
The inequality \eqref{eq 4.1} is done. If $\|x-y\|\le2r$, then the balls $B(x,r)$ and $B(y,r)$ are contained in 
$B(x,5r)$. The same analysis give the inequality \eqref{eq 4.2} . In order to prove \eqref{eq 4.3}, note that
if $\|x-\sigma(x)\|\le 2r$, then \eqref{eq 4.3} follows by \eqref{eq 4.2}. Assume that $\|x-\sigma(x)\|>2r$ and
let $j$ be the smallest positive integer such that $\|x-\sigma(x)\|\le 2^jr.$ Then applying  \eqref{eq 4.1} and  \eqref{eq 4.2}, we get
\begin{align*}
&|b_{B(x,r)}-b_{B(\sigma(x),r)}|\\
&\le |b_{B(x,r)}-b_{B(x,2^{j+2}r)}|+|b_{B(x,2^{j+2}r)}-b_{B(\sigma(x),2^{j+2}r)}|+|b_{B(\sigma(x),2^{j+2}r)}-b_{B(\sigma(x),r)}|\\
&\le C(2^{j+2}r)^\beta \|b\|_{\Lambda^\beta}\le C(\|\sigma(x)-x\|)^\beta\|b\|_{\Lambda^\beta}\\
&\le C(\|\sigma(x)-x\|+r)^\beta\|b\|_{\Lambda^\beta}.
\end{align*}
The proof is complete.
\end{proof}
We now prove Theorem \ref{thm commutator Riesz}. We note that to prove the sufficiency condition, we use the recent new method in \cite[Theorem 3.1]{DH3}.

\begin{proof}[Proof of Theorem \ref{thm commutator Riesz}]
Let $1<p<\infty$. We choose the index $t$ such that $1<t<p$ and then choose the index $s$ such that 
$s>1$ and that $st<p$. 

We shall prove 
$$\|[b,R_{{\rm D},\ell}]f\|_{{\dot F}^{\alpha,\infty}_{p,{\rm D}}}\lesssim \|b\|_{\Lambda^\beta}\|f\|_{L^p_\omega}$$
for compactly supported functions $f\in  L^p_\omega$ which form a dense subspace in $L^p_\omega$.
Fix $x\in \mathbb R^N, k\in \mathbb Z$ and set $B=B(x, {\mathfrak R}^k)$.
We write all the elements of $G\setminus\{\rm id\}$ in a sequence $\sigma_1,\sigma_2,\ldots,\sigma_{|G|-1}$.
We define the sets $U_j\subseteq \mathbb R^N, j=1,2,\ldots, |G|-1$, inductively: 
\begin{align*}
&U_1=\{z\in \mathbb R^N : \|z-x\|>5{\mathfrak R}^k, \|z-\sigma_1(x)\|\le 5{\mathfrak R}^k\},\\
&U_{j+1}=\{z\in \mathbb R^N :\|z-x\|>5{\mathfrak R}^k, \|z-\sigma_{j+1}(x)\|\le 5{\mathfrak R}^k\}\setminus
               \Big(\bigcup_{i=1}^j U_i\Big)\quad\mbox{for } 1\leq j\leq |G|-1.
\end{align*}    

\noindent For a  compactly supported functions $f\in  L^p_\omega$, we decompose
\begin{equation}\label{eq decom}
f=f_1+f_2+\sum_{j=1}^{|G|-1}f_{\sigma_j},\quad\mbox{where } f_1=f\chi_{5B},\ f_2= f\chi_{({\mathcal O} (5B))^c},
\ f_{\sigma_j}=f\chi_{U_j}.
\end{equation}   
Observe that $[b,R_{{\rm D},\ell}]f=[b-b_B,R_{{\rm D},\ell}]f$.
For $y\in \mathbb R^N$ we set
\begin{align*} 
&[b,R_{{\rm D},\ell}]f_1(y)=(b(y)-b_B)R_{{\rm D},\ell}f_1(y)+R_{{\rm D},\ell}((b_B-b)f_1)(y)
=: g_{11}(y)+g_{12}(y),\\
&[b,R_{{\rm D},\ell}]f_2(y)=(b(y)-b_B)R_{{\rm D},\ell}f_2(y)+R_{{\rm D},\ell}((b_B-b)f_2)(y) =: g_{21}(y)+g_{22}(y),\\
&[b,R_{{\rm D},\ell}]f_{\sigma_j}(y)=(b(y)-b_B)R_{{\rm D},\ell}f_{\sigma_j}(y)+R_{{\rm D},\ell}((b_B-b)f_{\sigma_j})(y) =: g_{\sigma_j1}(y)+g_{\sigma_j2}(y).
\end{align*}      
Note that $|(g_{11})_B|\le \frac1{\omega(B)}\int_B |g_{11}(y)|d\omega(y)$.
Let $t'$ be the conjugate of the index $t$ chosen as the beginning. H\"older's inequality gives
\begin{align*}
&\frac1{\omega(B) \ell(B)^\beta}\int_B |g_{11}(y)-(g_{11})_B|d\omega(y)\le \frac2{\omega(B) \ell(B)^\beta}\int_B |g_{11}(y)|d\omega(y)\\
&\qquad\le \frac2{\omega(B) \ell(B)^\beta}\int_B |(b(y)-b_B)R_{{\rm D},\ell}f_1(y)|d\omega(y)\\
&\qquad\le \frac2{\ell(B)^\beta} \Big(\frac1{\omega(B)} \int_B |b(y)-b_B|^{t'} d\omega(y)\Big)^{\frac1{t'}} 
           \Big(\frac1{\omega(B)}\int_B |R_{{\rm D},\ell}f_1(y)|^td\omega(y)\Big)^{\frac 1t}\\ 
&\qquad\le C\|b\|_{\Lambda^\beta}M_t(R_{{\rm D},\ell}f_1)(x),
\end{align*}
where the last inequality follows from Theorem \ref{lip and lipd} and $M_t(f)=(M(f^t))^{\frac1t}$ with $M$  the Hardy--Littlewood maximal function in the space of
homogeneous type $(\Bbb R^N,\|\cdot\|,\omega)$. Similarly, we also have
\begin{align*}
&\frac1{\omega(B) \ell(B)^\beta}\int_B |g_{21}(y)-(g_{21})_B|d\omega(y)+\frac1{\omega(B) \ell(B)^\beta}\int_B |g_{\sigma_j1}(y)-(g_{\sigma_j1})_B|d\omega(y)\\
&\le C\|b\|_{\Lambda^\beta}\big(M_t(R_{{\rm D},\ell}f_1)(x)+M_t(R_{{\rm D},\ell}f_{\sigma_j})(x)\big)\qquad\mbox{for }j=1,2,\cdots,|G|-1.
\end{align*}
To deal with $g_{12}$, we use H\"older's inequality and then the $L^t_\omega$-boundedness of $R_{{\rm D},\ell}$  to obtain that
\begin{align*}
&\frac1{\omega(B) \ell(B)^\beta}\int_B |g_{12}(y)-(g_{12})_B|d\omega(y)\\
&\qquad\le \frac2{\omega(B) \ell(B)^\beta}\int_B |g_{12}(y)|d\omega(y)\\
&\qquad=\frac2{\omega(B) \ell(B)^\beta}\int_B |R_{{\rm D},\ell}((b-b_B)f_1)(y)|d\omega(y) \\
&\qquad\le \frac{2}{\omega(B) \ell(B)^\beta}\|R_{{\rm D},\ell}((b-b_B)f_1)\|_{L^t_\omega}\omega(B)^{1-\frac 1t} \\
&\qquad\le {C}\omega(B)^{-\frac 1t}\ell(B)^{-\beta} \|(b-b_B)f_1\|_{L^t_\omega}.
\end{align*}
Let $s'$ be the conjugate of the index $s$ chosen as the beginning. By H\"older's inequality, we have
\begin{align*}
\frac1{\omega(B)}\|(b-b_B)f_1\|^t_{L^t_\omega}
&=\frac1{\omega(B)}\int_{5B} |b(y)-b_B|^t|f|^td\omega(y) \\
&\le {  \Big(\frac1{\omega(B)}\int_{5B} |b(y)-b_B|^{ts'}d\omega(y) \Big)^{\frac1{s'}}}
         \Big(\frac1{\omega(B)}\int_{5B} |f|^{ts} d\omega(y)\Big)^{\frac1s}.
\end{align*}
We use Theorem \ref{lip and lipd} to get
\begin{equation}\label{norm-d}
\begin{aligned}
&\frac1{\omega(B)}\int_{5B} |b(y)-b_B|^{ts'}d\omega(y) \\
&\le 2^{ts'}\frac1{\omega(B)}\int_{5B} |b(y)-b_{5B}|^{ts'}d\omega(y) 
       + 2^{ts'} |b_{5B}-b_B|^{ts'}\\
&\lesssim \frac1{\omega(5B)} \int_{5B} |b(y)-b_{5B}|^{ts'}d\omega(y) 
     + \frac1{\omega(5B)}\int_{5B} |b_{5B}-b(y)|^{ts'}d\omega(y) \\
&\lesssim \ell(B)^{\beta ts'}\|b\|^{ts'}_{\Lambda^\beta}.
\end{aligned}
\end{equation}

Thus, $$\frac1{\omega(B) \ell(B)^\beta}\int_B |g_{12}(y)-(g_{12})_B|d\omega(y)\le C\|b\|_{\Lambda^\beta}M_{ts}(f)(x).$$
We turn to  analyze $g_{22}$. Observe that for $ z\notin  {\mathcal O}(5B)$ and $y\in B$ we have $\|x-y\|\le d(x,z)/2$.
Let $\Gamma$ be a fixed closed Weyl chamber such that $x\in \Gamma$, then 
\begin{align*}
|g_{22}(y)-g_{22}(x)|
&=\bigg|\int_{\mathbb R^N} (K_{{\rm D},\ell}(y,z)-K_{{\rm D},\ell}(x,z))(b(z)-b_B)f_2(z)d\omega(z) \bigg|\\
&\le \sum_{\sigma\in G}\int_{\sigma(\Gamma)} |K_{{\rm D},\ell}(y,z)-K_{{\rm D},\ell}(x,z)||b(z)-b_B||f_2(z)|d\omega(z)\\
&\le C \sum_{\sigma\in G}\int_{\sigma(\Gamma)} \frac{\|y-x\|}{\|x-z\|}
          \frac1{\omega(B(x,d(x,z)))}|b(z)-b_B| |f_2(z)|d\omega(z)\\
&=: \sum_{\sigma\in G} J_\sigma(x,y),
\end{align*}
where $K_{D,\ell}$ is the kernel associated to $R_{D,\ell}$ and the second inequality follows from the pointwise regularity estimates for $K_{D,\ell}$ investigated in \cite[Theorem 1.1]{HLLW}.
Note that $$\mathcal{O}(B(x,r))=\bigcup_{\sigma\in G}B(\sigma(x),r)=\{y\in \mathbb{R}^{N}:d(x,y)<r\}$$
and $$\omega(B(x,r))\leq \omega\big(\mathcal{O}(B(x,r))\big)\leq |G|\omega(B(x,r))$$
(see for example \cite{ADH, DH2}). We have
$$\omega(B(x,r)) \sim \omega(B(\sigma(x),r))\qquad\mbox{for }\sigma\in G.$$
In dealing with $J_\sigma(x,y)$ we shall use the inequalities:
\begin{align*}
& \|x-z\|\ge \max(\|x-\sigma(x)\|/2, {\mathfrak R}^k)\quad\mbox{for }z\in \sigma(\Gamma),\\
&5r\le \|\sigma(x)-z\|=d(x,z)\le \|x-z\|\quad\mbox{for }z\in \sigma(\Gamma), z\notin {\mathcal O}(5B).
\end{align*}
Hence,
\begin{align*}
J_\sigma(x,y)&\le C\int_{\sigma(\Gamma)} \frac{{\mathfrak R}^k}{\|x-z\|}
          \frac1{\omega(B(\sigma(x),d(x,z)))}|b_{B(\sigma(x),{\mathfrak R}^k)}-b_B| |f_2(z)|d\omega(z)\\
          &\qquad+C\int_{\sigma(\Gamma)} \frac{{\mathfrak R}^k}{\|x-z\|}
          \frac1{\omega(B(\sigma(x),d(x,z)))}|b_{B(\sigma(x),{\mathfrak R}^k)}-b(z)| |f_2(z)|d\omega(z)\\
          &=:J_{\sigma,1}(x,y)+J_{\sigma,2}(x,y).
\end{align*}
By \eqref{eq 4.3}, 
\begin{equation}\label{eq 4.6}
\begin{aligned}
&J_{\sigma,1}(x,y)\\
&\le \int_{\sigma(\Gamma)}\Big(\frac{{\mathfrak R}^k}{\|x-z\|}\Big)^{1-\beta}\Big(\frac{{\mathfrak R}^k}{{\mathfrak R}^k+\|x-\sigma(x)\|}\Big)^\beta\Big(\frac{\|x-\sigma(x)\|}{{\mathfrak R}^k}+1\Big)^\beta{\mathfrak R}^{k\beta}\|b\|_{\Lambda^\beta}  \\
&\qquad\times \frac1{\omega(B(\sigma(x), \|\sigma(x)-z\|))} |f_2(z)|d\omega(z)\\
&\lesssim \ell(B)^\beta\|b\|_{\Lambda^\beta} \int_{\sigma(\Gamma)} \Big(\frac{{\mathfrak R}^k}{\|x-z\|}\Big)^{1-\beta} \frac{ |f_2(z)|}{\omega(B(\sigma(x), \|\sigma(x)-z\|))}d\omega(z)\\
&\lesssim  \ell(B)^\beta\|b\|_{\Lambda^\beta} \sum_{j=2}^\infty \int_{\sigma(\Gamma), \|\sigma(x)-z\|\sim 2^j{\mathfrak R}^k} \Big(\frac{{\mathfrak R}^k}{\|x-z\|}\Big)^{1-\beta} \frac{ |f_2(z)|}{\omega(B(\sigma(x), \|\sigma(x)-z\|))}d\omega(z)\\
&\lesssim  \ell(B)^\beta\|b\|_{\Lambda^\beta}M(f_2)(\sigma(x)).
\end{aligned}
\end{equation}
We turn to considering $J_{\sigma,2}(x,y)$. Applying H\"older's inequality and then the definition of $\Lambda^\beta$,
we obtain
\begin{equation}\label{eq 4.7}
\begin{aligned}
J_{\sigma,2}(x,y)&\lesssim 
    \sum_{j=2}^\infty \int_{\sigma(\Gamma),  \|\sigma(x)-z\|\sim 2^j{\mathfrak R}^k} 2^{-j}
          \frac{ |f_2(z)|}{\omega(B(\sigma(x),2^j{\mathfrak R}^k))}|b_{B(\sigma(x),{\mathfrak R}^k)}-b(z)| d\omega(z)\\
  &\lesssim 
    \sum_{j=2}^\infty \int_{\sigma(\Gamma),  \|\sigma(x)-z\|\sim 2^j{\mathfrak R}^k} 2^{-j}
          \frac{ |f_2(z)|}{\omega(B(\sigma(x),2^j{\mathfrak R}^k))}( 2^j{\mathfrak R}^k)^\beta\|b\|_{\Lambda^\beta}  d\omega(z)\\        
  &\lesssim  \ell(B)^\beta\|b\|_{\Lambda^\beta}M(f_2)(\sigma(x)).
\end{aligned}
\end{equation}
Thus, by \eqref{eq 4.6} and \eqref{eq 4.7}, we have
$$\frac1{\omega(B) \ell(B)^\beta}\int_B |g_{22}(y)-g_{22}(x)|d\omega(y)\le C\|b\|_{\Lambda^\beta}\sum_{\sigma\in G}M(f_2)(\sigma(x)).$$
Finally, we turn to estimate $g_{\sigma_j 2}$. To this end we note that for $z\in U_j$ and $y\in B$ we have
$$\|z-y\|\ge \|z-x\|-\|x-y\|\ge 5{\mathfrak R}^k-{\mathfrak R}^k=4{\mathfrak R}^k,$$
$$\|x-\sigma_j(x)\|\le \|x-y\|+\|z-y\|+\|z-\sigma_j(x)\|\le 6{\mathfrak R}^k+\|z-y\|\le\frac52\|z-y\|,$$
and then
\begin{align*}
\int_B |R_{{\rm D},\ell}(z,y)|d\omega(y)
&\lesssim \int_B  \frac{d(z,y)}{\|z-y\|}\frac1{\omega(B(z,d(z,y)))}d\omega(y)\\
&\lesssim \frac{{\mathfrak R}^k}{{\mathfrak R}^k+\|x-\sigma_j(x)\|}\int_B \frac{d(z,y)}{{\mathfrak R}^k}      \frac1{\omega(B(z,d(z,y)))}d\omega(y)\\
&\lesssim \frac{{\mathfrak R}^k}{{\mathfrak R}^k+\|x-\sigma_j(x)\|}\int_{\mathcal O(B(x, 16{\mathfrak R}^k))} \frac{d(z,y)}{{\mathfrak R}^k}      \frac1{\omega(B(z,d(z,y)))}d\omega(y)\\
&\lesssim \frac{{\mathfrak R}^k}{{\mathfrak R}^k+\|x-\sigma_j(x)\|}\sum_{j=-4}^\infty\int_{d(z,y)\sim 2^{-j}{\mathfrak R}^k} \frac{d(z,y)}{{\mathfrak R}^k}      \frac1{\omega(B(z,d(z,y)))}d\omega(y)\\
&\lesssim \frac{{\mathfrak R}^k}{{\mathfrak R}^k+\|x-\sigma_j(x)\|}\sum_{j=-4}^\infty 2^{-j}\int_{d(z,y)\sim 2^{-j}{\mathfrak R}^k}     \frac1{\omega(B(z,d(z,y)))}d\omega(y)\\
&\lesssim \frac{{\mathfrak R}^k}{{\mathfrak R}^k+\|x-\sigma_j(x)\|}.
\end{align*}
Hence the proof of \eqref{eq 4.3} gives
\begin{align*}
&\frac1{\omega(B) \ell(B)^\beta}\int_B |g_{\sigma_j 2}(y)-(g_{\sigma_j 2})_B|d\omega(y)\\
&\qquad\le \frac2{\omega(B) \ell(B)^\beta}\int_B |g_{\sigma_j 2}(y)|d\omega(y)\\
&\qquad\le \frac2{\omega(B) \ell(B)^\beta}\int_B\int_{U_j} |R_{{\rm D},\ell}(z,y)||b_B-b(z)||f_{\sigma_j}(z)| d\omega(z)d\omega(y)\\
&\qquad\lesssim \frac{{\mathfrak R}^k}{{\mathfrak R}^k+\|x-\sigma_j(x)\|}\Big(\frac{\|x-\sigma(x)\|}{{\mathfrak R}^k}+1\Big)^\beta
\|b\|_{\Lambda^\beta} \frac1{\omega(B)}\int_{U_j} |f_{\sigma_j}(z)|d\omega(z)\\
&\qquad\lesssim \|b\|_{\Lambda^\beta}M(f)(\sigma_j(x)).
\end{align*}
Putting these estimates together, we now take the supremum over all $k\in \mathbb Z$ and then take $L^p_{\omega}$-norm of both sides. Then we use Proposition \ref{defference}, Theorem \ref{thm 4.2}, the $L^p_\omega$-boundedness of the maximal functions $M_t$, $M_{ts}$ and $M$ (noting that $t<p$ and $ts<p$), and the fact that the measure $d\omega$ is $G$-invariant, 
to conclude that
$$\|[b,R_{{\rm D},\ell}]f\|_{{\dot F}^{\beta,\infty}_{p,{\rm D}}}\lesssim \|b\|_{\Lambda^\beta}\|f\|_{L^p_\omega}$$
from \eqref{eq decom}.

Conversely, we now prove 
$$ \|b\|_{\Lambda^\beta}\|f\|_{L^p_\omega}\lesssim \|[b,R_{{\rm D},\ell}]f\|_{{\dot F}^{\beta,\infty}_{p,{\rm D}}}.$$
Assume that the commutator $[b,R_{{\rm D},\ell}]$ is a bounded operator from $L^p(\mathbb {R}^N,\omega)$ to ${\dot F}^{\beta,\infty}_{p,{\rm D}}$.
 For $B=B(x_0,r)$ with $x_0\in \mathbb R^N$ and $r>0$, we consider
\begin{align*}
\frac 1{\ell(B)^{\beta}\omega(B)}\int_{B} |f(x)-f_B| d\omega(x) .
\end{align*}

Choose a $C^\infty$ function $a(x)$ with the following properties:
\begin{itemize}
\item [(1)] supp $a \subseteq B(x_0, 3r)$, $\int_{\mathbb R^N} a(x)\,d\omega(x)=0$, $a=1$ on $B$,
\item[(2)]  $\|a\|_{\dot F^{-\beta, 1}_{p', {\rm D}}}\le r^\beta\omega(B)^{1/p'}$.
\end{itemize}

We provide an explanation for the existence of such a function $a(x)$.
That is, if $a(x)$ satisfies the conditions in (1) above, then we have the norm estimate in (2).

Recall that $(\Bbb R^N, \|\cdot\|, \omega)$ is the space of homogeneous type. 
Let $\{S_k\}_{k\in \Bbb Z}$ be Coifman's approximation to the identity. 
Set $D_k=S_k-S_{k-1}$. 
Choosing $k_0\in \mathbb Z$ with ${\mathfrak R}^{-k_0}\sim r$, we obtain
\begin{align*}
&\Big\|\Big\{\sum_{k\in \mathbb Z}{\mathfrak R}^{-k\beta}|D_k(a)|\Big\}\Big\|_{L^{p'}_{\omega}}\\
&\le \Big\|\Big\{\sum_{k=k_0+1}^\infty{\mathfrak R}^{-k\beta}|D_k(a)|\Big\}\Big\|_{L^{p'}_{\omega}}
       + \Big\|\Big\{\sum_{k=-\infty}^{k_0}{\mathfrak R}^{-k\beta}|D_k(a)|\Big\}\Big\|_{L^{p'}_{\omega}}\\
 &=:I_1+I_2.
\end{align*}
Since $S_k(x,y)=0$ for $\|x-y\|>{\mathfrak R}^{4-k}$, we have that 
$D_k(x,y)$ is supported in $\|x-y|\|\leq {\mathfrak R}^{5-k}$.
Note also that  supp $a\subseteq I(x_0,3r)$, we have that 
$$D_k(a)(x) = \int_{I(x_0,3r)}D_k(x,y)a(y)d\omega(y)$$
is supported in $I(x_0,Cr)$ for some absolute positive constant $C$. 
Moreover, we also have
$$|D_k(a)(x)| \leq \int_{I(x_0,3r)}|D_k(x,y)||a(y)|d\omega(y) 
             \leq \int_{I(x_0,3r)}|D_k(x,y)|d\omega(y)\lesssim1.$$
Thus,
\begin{align*}
I_1     &\le \Bigg(\int_{I(x_0,Cr)}\bigg\{\sum_{k=k_0+1}^\infty{\mathfrak R}^{-k\beta}\big|D_k(a)(x)\big|\bigg\}^{p'}d\omega(x)\Bigg)^{1/p'}\\
&\lesssim \Bigg(\int_{I(x_0,Cr)}\bigg\{\sum_{k=k_0+1}^\infty{\mathfrak R}^{-k\beta}\bigg\}^{p'}d\omega(x)\Bigg)^{1/p'}\\
&\leq C r^\beta \omega(I)^{1/p'}.
\end{align*}

To estimate $I_2$, we use the vanishing condition of $a(x)$ and smooth condition of $D_k$ to get
\begin{align*}
I_2&\le  \sum_{k=-\infty}^{k_0}\Big\|{\mathfrak R}^{-k\beta}|D_k(a)|\Big\|_{L^{p'}_{\omega}}\\
    &\le \sum_{k=-\infty}^{k_0}\bigg(\int_{I(x_0,C{\mathfrak R}^{-k})}\bigg|{\mathfrak R}^{-k\beta}\int_{I(x_0,3r)} \Big(D_k(x,y)-D_k(x,x_0)\Big)a(y)d\omega(y)\bigg|^{p'} d\omega(x)\Bigg)^{1\over p'}\\
     &\lesssim \sum_{k=-\infty}^{k_0}\bigg(\int_{I(x_0,C{\mathfrak R}^{-k})}\bigg|{\mathfrak R}^{-k\beta}\int_{I(x_0,3r)} \frac{|y-x_0|{\mathfrak R}^k}{\omega(I(y,{\mathfrak R}^{-k}))}d\omega(y)\bigg|^{p'} d\omega(x)\Bigg)^{1\over p'}\\
      &\lesssim \sum_{k=-\infty}^{k_0}\bigg(\int_{I(x_0,C{\mathfrak R}^{-k})}\bigg|{\mathfrak R}^{-k\beta}\int_{I(x_0,3r)} \frac{r{\mathfrak R}^k}{\omega(I(x_0,{\mathfrak R}^{-k})}d\omega(y)\bigg|^{p'} d\omega(x)\Bigg)^{1\over p'}\\
  &\lesssim \sum_{k=-\infty}^{k_0} r{\mathfrak R}^{-k(\beta-1)}\Big(\frac{\omega(I(x_0,r))}{\omega(I(x_0,{\mathfrak R}^{-k})}\Big
      )^{1-1/p'}\omega(I(x_0,r))^{1/p'}\\
      &\lesssim \sum_{k=-\infty}^{k_0} r{\mathfrak R}^{k(1-\beta)}\omega(I(x_0,r))^{1/p'}\\
      &\lesssim r^\beta \omega(I)^{1/p'}.
\end{align*}

We now continue with our proof.

\begin{defn}\label{mfb-N}
Let $f$ be finite almost everywhere on $\mathbb R^N$. For $B\subseteq \mathbb R^N$ 
with $\omega(B)<\infty$, we define a median value $m_f(B)$ of $f$ over $B$ to be a real number
satisfying
$$\omega(\{x\in B : f(x)>m_f(B)\})\le \frac12\omega(B)\qquad\mbox{and}\qquad 
\omega(\{x\in B : f(x)<m_f(B)\})\le \frac12\omega(B).$$
\end{defn}

Note that
\begin{align*}
[b, R_{{\rm D},\ell}]f(x)&=b(x)R_{{\rm D},\ell}f(x)-R_{{\rm D},\ell}(bf)(x)\\
 &=\int_{\mathbb R^N}(b(x)-b(y))R_{{\rm D},\ell}f(y)d\omega(y),
\end{align*}
where $$R_{{\rm D},\ell}(x,y)=-c\int_0^\infty \frac{y_\ell-x_\ell}t h_t(x,y)\frac{dt}{\sqrt t}.$$
We choose $\widetilde B=B(\tilde x_0, r)$ such that $y_\ell-x_\ell\ge r$ and $\|x-y\|\sim r$ for $x\in B$  and $y\in \widetilde B$. Set
 $$E_1:=\{y\in \widetilde B : b(y)\le m_b(\widetilde B)\}\qquad\mbox{and}\qquad 
    E_2:=\{y\in \widetilde B : b(y)> m_b(\widetilde B)\}.$$
Moreover, we define
$$B_1:=\{y\in B : b(y)\ge m_b(\widetilde B)\}\qquad\mbox{and}\qquad 
    B_2:=\{y\in B : b(y)< m_b(\widetilde B)\}.$$
Then by Definition \ref{mfb-N}, 
we see that $\omega(E_i)\ge \frac12\omega(\widetilde B), i=1,2.$ 
Moreover, for $(x,y)\in B_i\times E_i, i=1,2,$
\begin{align*}
|b(x)-b(y)|&=|b(x)-m_b(\tilde B)+m_b(\widetilde B)-b(y)|\\
   &=|b(x)-m_b(\widetilde B)|+|m_b(\widetilde B)-b(y)|\ge |b(x)-m_b(\widetilde B)|.
\end{align*}
Hence, we have the following facts,
\begin{equation}\label{eq 3.13}
\begin{aligned}
& \mbox{(i) } B=B_1\cup B_2, \widetilde B= E_1\cup E_2 
    \mbox{ and }\omega(E_i)\ge \frac12\omega(\widetilde B), i=1,2;\\
& \mbox{(ii) } b(x)-b(y) \mbox{ does not change sign for all }(x,y)\in B_i\times E_i, i=1,2; \\
&\mbox{(iii) } |b(x)-m_b(\tilde B)|\le |b(x)-b(y)| \mbox{ for all }(x,y)\in B_i\times E_i, i=1,2.
\end{aligned}
\end{equation}

We have that, for $(x,y)\in B_i\times E_i, i=1,2$, 
$$|R_{{\rm D},\ell}(x,y)|\ge \frac 1{\omega(B(\tilde x_0,r))}.$$
Let $f_i=\chi_{E_i}, i=1,2.$ 
Then the facts \eqref{eq 3.13} give
\begin{align*}
&\frac1{\ell(B)^{\beta}\omega(B)}\sum_{i=1}^2\bigg|\int_B [b, R_{{\rm D},\ell}](f_i)(x) a(x) d\omega(x)\bigg|\\
 &\ge \frac1{\ell(B)^{\beta}\omega(B)}\sum_{i=1}^2\bigg|\int_{B_i} [b, R_{{\rm D},\ell}](f_i)(x) a(x) d\omega(x)\bigg| \\
 &= \frac1{\ell(B)^{\beta}\omega(B)}\sum_{i=1}^2\int_{B_i}\int_{E_i} |b(x)-b(y)||R_{{\rm D},\ell}(x,y)| |a(x)| d\omega(y)d\omega(x)\\
 &\gtrsim  \frac1{\ell(B)^{\beta}\omega(B)}\sum_{i=1}^2\int_{B_i}|b(x)-m_b(\widetilde B)|\frac 1{\omega(B(\tilde x_
     0,r))}\int_{E_i} d\omega(y)d\omega(x)\\
  &\gtrsim  \frac1{\ell(B)^{\beta}\omega(B)}\sum_{i=1}^2\int_{B_i}|b(x)-m_b(\widetilde B)|d\omega(x)\\
 &\gtrsim \frac 1{\ell(B)^{\beta}\omega(B)}\int_B |b(x)-b_B| d\omega(x).
\end{align*}

On the other hand, from duality and the boundedness of $[b, R_{{\rm D},\ell}]$,
we deduce that
\begin{align*}
&\frac1{\ell(B)^{\beta}\omega(B)}\sum_{i=1}^2\bigg|\int_B [b, R_{{\rm D},\ell}](f_i)(x)a(x)d\omega(x)\bigg|\\
&\lesssim \frac1{\ell(B)^{\beta}\omega(B)}\sum_{i=1}^2  \|[b, R_{{\rm D},\ell}]f_i\|_{ \dot{F}_{p, {\rm D}}^{\beta,\infty}}
 \|a\|_{ \dot{F}_{p', {\rm D}}^{-\beta,1}} \\
&\lesssim \frac1{\ell(B)^{\beta}\omega(B)}\sum_{i=1}^2  \|[b, R_{{\rm D},\ell}]\|_{L^p_\omega\to \dot{F}_{p,{\rm D}}^{\beta,\infty}}\|f_i\|_{L^p_\omega} \|a\|_{ \dot{F}_{p' {\rm D}}^{-\beta,1}} \\
&\lesssim \frac1{\ell(B)^{\beta}\omega(B)} \|[b, R_{{\rm D},\ell}]\|_{L^p_\omega\to \dot{F}_{p, {\rm D}}^{\beta,\infty}}\omega(\widetilde B)^{1/p}r^\beta\omega(B)^{1/p'}\\
&\lesssim \|[b, R_{{\rm D},\ell}]\|_{L^p_\omega\to \dot{F}_{p,{\rm D}}^{\beta,\infty}}.
\end{align*}
Therefore, we have
$$\frac 1{\ell(B)^{\beta}\omega(B)}\int_B |b(x)-b_B| d\omega(x)\lesssim \|[b, R_{{\rm D},\ell}]\|_{L^p_\omega\to \dot{F}_{p,{\rm D}}^{\beta,\infty}}.$$ 
The proof of Theorem \ref{thm commutator Riesz} is complete.
\end{proof}

To prove Theorem \ref{main 2}, we need to estimate the kernel of fractional operator $\triangle_{\rm D}^{-\alpha/2}$.

\begin{lem}\label {lem 4.5}
Let $K_\alpha(x,y)$ be the kernel of fractional operator $\triangle_{\rm D}^{-\alpha/2}$.
Then we have the following estimates:
\begin{itemize}
\item[(a)] $\displaystyle |K_\alpha(x,y)|\lesssim   \frac{d(x,y)^{2+\alpha}}{\|x-y\|^2}\frac1{\omega(B(x,d(x,y)))}$
   for $d(x,y)\ne 0$;
\item[(b)]  $\displaystyle |K_\alpha(x,y)-K_\alpha(x,y')|\lesssim \frac{\|y-y'\|}{\|x-y\|^2}d(x,y)^{1+\alpha}\frac1{\omega(B(x,d(x,y)))}$ if $\|y-y'\|<\frac12 d(x,y)$;
\item[(c)]  $\displaystyle |K_\alpha(x,y)-K_\alpha(x',y)|\lesssim \frac{\|x-x'\|}{\|x-y\|^2}d(x,y)^{1+\alpha}\frac1{\omega(B(x,d(x,y)))}$ if $\|x-x'\|<\frac12 d(x,y)$.
\end{itemize}
\end{lem}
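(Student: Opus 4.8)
The plan is to work from the subordination identity for the kernel, $K_\alpha(x,y)=\frac{1}{\Gamma(\alpha/2)}\int_0^\infty h_t(x,y)\,t^{\alpha/2-1}\,dt$ when $d(x,y)\ne 0$, together with the identities for $K_\alpha(x,y)-K_\alpha(x,y')$ and $K_\alpha(x',y)-K_\alpha(x,y)$ obtained by subtracting two copies of it. Throughout write $\delta:=d(x,y)$ and $\rho:=\|x-y\|$, so that $\delta\le\rho$. First I would record that the integral converges absolutely: near $t=0$ the Gaussian factor $e^{-c\delta^2/t}$ furnished by Theorem~A(a) dominates every power of $t$ (this is the only place $d(x,y)\ne0$ is used), while near $t=\infty$ one has $h_t(x,y)\lesssim\omega(B(x,\sqrt t))^{-1}\lesssim t^{-\mathbf N/2}$, so convergence of the tail is exactly the condition $\alpha<\mathbf N$.

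For the size estimate (a) I would split $\int_0^\infty=\int_0^{\delta^2}+\int_{\delta^2}^{\rho^2}+\int_{\rho^2}^\infty$ and treat each piece with the heat-kernel size bound of Theorem~A(a) in its refined form from \cite{DH2} — equivalently, with the Poisson-kernel estimate $|p_t(x,y)|\lesssim V(x,y,t+d(x,y))^{-1}\frac{t}{t+\|x-y\|}$ recorded above, since $\triangle_{\rm D}^{-\alpha/2}$ is, up to a constant, an integral of the Dunkl Poisson semigroup as well. It is precisely this refinement, which quantifies the gap between the Dunkl metric $d$ and the Euclidean norm, that produces the factor $\|x-y\|^{-2}$ after integration in $t$. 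On $(0,\delta^2)$ the substitution $t=\delta^2 s$ combined with the Gaussian decay yields a term of the claimed shape; on $(\delta^2,\rho^2)$ one drops the Gaussian, invokes the doubling bound $\omega(B(x,\sqrt t))\gtrsim(\sqrt t/\delta)^N\omega(B(x,\delta))$ from \eqref{rd1.1} together with the extra $\|x-y\|$-decay, and sums a power integral; on $(\rho^2,\infty)$ one uses $\omega(B(x,\sqrt t))\gtrsim(\sqrt t/\rho)^N\omega(B(x,\rho))$ (here $\alpha<\mathbf N$ gives convergence) and the further comparison $\omega(B(x,\rho))\gtrsim(\rho/\delta)^N\omega(B(x,\delta))$ to recover the denominator $\omega(B(x,\delta))$. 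Adding the three contributions gives (a).

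Estimates (b) and (c) follow the same scheme once one splits the $t$-integral at $t=\|y-y'\|^2$, respectively $t=\|x-x'\|^2$. Under the hypothesis $\|y-y'\|<\tfrac12 d(x,y)$ (respectively $\|x-x'\|<\tfrac12 d(x,y)$) all the relevant geometric quantities are stable: $d(x,y')\sim d(x,y)$ and $\|x-y'\|\sim\|x-y\|$. For $t>\|y-y'\|^2$ one inserts the heat-kernel Hölder estimate of Theorem~A(b), again in its refined form carrying the $\|x-y\|$-decay; this supplies the gain $\|y-y'\|/\sqrt t$, and one then integrates exactly as in (a). For the remaining range $t\le\|y-y'\|^2\le d(x,y)^2$ one simply bounds $|h_t(x,y)-h_t(x,y')|\le|h_t(x,y)|+|h_t(x,y')|$: the Gaussian makes this piece negligible, so the prefactor $\|y-y'\|/d(x,y)$ needed to match the statement may be inserted at no cost. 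Part (c) is identical, using $h_t(x,y)=h_t(y,x)$ and the regularity of $h_t$ in its first variable.

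The step I expect to be the main obstacle is the bookkeeping in the regime $\delta\ll\rho$, i.e.\ when $y$ lies close to a nontrivial reflection of $x$. There the bare Gaussian bound sees only $\delta$, not $\rho$, so one is forced to use the finer estimates, and across the intermediate scales $\delta\le\sqrt t\le\rho$ one must also exploit the geometric fact that $x$ (equivalently $y$) then sits at Euclidean distance $\gtrsim\rho$ from the reflecting hyperplane realizing $d(x,y)$. This is what makes $\omega(B(x,\cdot))$ grow fast enough over those scales to absorb the powers of $\rho/\delta$ thrown off by the power integrals; organizing this interplay of the two metrics, rather than any one inequality, is where the real work lies.
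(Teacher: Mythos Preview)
Your plan is essentially the paper's own proof: represent $K_\alpha$ via the heat semigroup, feed in the refined Dunkl heat-kernel bounds carrying the extra $(1+\|x-y\|/\sqrt t)^{-2}$ decay, and integrate in $t$. Two points of comparison are worth noting.

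First, the paper's execution is simpler than what you outline. It pulls the factor $t/\|x-y\|^2$ out of $(1+\|x-y\|/\sqrt t)^{-2}$ at the start and then splits only once, at $t=d(x,y)^2$; your three-way split $(0,\delta^2)\cup(\delta^2,\rho^2)\cup(\rho^2,\infty)$ works too, but the middle piece is unnecessary. Likewise, for the smoothness estimates the paper splits at $d(x,y)^2$ rather than at $\|y-y'\|^2$: once $\|y-y'\|<\tfrac12 d(x,y)$, the H\"older bound $|h_t(x,y)-h_t(x,y')|\lesssim(\|y-y'\|/\sqrt t)\,V(x,y,\sqrt t)^{-1}(1+\|x-y\|/\sqrt t)^{-2}e^{-cd(x,y)^2/t}$ holds uniformly in $t$ (for $\|y-y'\|\ge\sqrt t$ one uses the size bound on each term and absorbs $\|y-y'\|/\sqrt t\ge1$), so no separate treatment of $t\le\|y-y'\|^2$ is needed.

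Second, the worry in your final paragraph is misplaced. You do not need any separate geometric input about distance to reflecting hyperplanes: the refined heat-kernel estimate from \cite[Theorem~3.1]{DH}, namely $|h_t(x,y)|\lesssim V(x,y,\sqrt t)^{-1}(1+\|x-y\|/\sqrt t)^{-2}e^{-cd(x,y)^2/t}$, already encodes exactly the gain you are worried about in the regime $d(x,y)\ll\|x-y\|$. Once you accept this bound as a black box, the bookkeeping reduces to the standard doubling/reverse-doubling comparisons in \eqref{rd1.1}, and no further case analysis is required.
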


\begin{proof}
Note that
$$ \triangle_{\rm D}^{-\alpha/2}f(x) = {1\over \Gamma(\alpha/2)} \int_0^\infty {  e^{t\triangle_{\rm D}}}(f)(x) {dt\over t^{1-\alpha/2}}$$
and
$$e^{t\triangle_D}f(x)=\int_{\Bbb R^N} h_t(x,y)f(y)d\omega(y),$$
where $h_t(x,y)$ is the Dunkl-heat kernel.
We write the fractional operator as follows:
$$\triangle_{\rm D}^{-\alpha/2} f(x)= \int_{\mathbb R^N} K_\alpha(x,y)f(y)d\omega(y).$$

To estimate the kernel $K_\alpha(x,y),$ we recall the following estimates
for the Dunkl-heat kernel given in \cite[Theorem 3.1]{DH}

\begin{itemize}
    \item[(a)] There are constants $C,c>0$ such that
    $$|h_t(x,y)|\leqslant C\frac1{V(x,y,\sqrt t)}\bigg(1+\frac{\|x-y\|}{\sqrt t}\bigg)^{-2}e^{-cd(x,y)^2/t},$$
    for every $t>0$ and for every $x,y\in \mathbb R^N$.
    \item[(b)] There are constants $C,c>0$ such that
    $$|h_t(x,y)-h(x,y')|\leqslant C\bigg(\frac{\|y-y'\|}{\sqrt t}\bigg)\frac1{V(x,y,\sqrt t)}\bigg(1+\frac{\|x-y\|}{\sqrt t}\bigg)^{-2}e^{-cd(x,y)^2/t},$$
    for every $t>0$ and for every $x,y,y'\in \mathbb R^N$ such that $\|y-y'\|<\sqrt t$.
\end{itemize}
We now estimate the kernel $R_j(x,y)$ as follows.
\begin{align*}
|K_\alpha(x,y)|&\lesssim \int_0^\infty \frac1{V(x,y,\sqrt t)}\frac t{\|x-y\|^2}e^{-cd(x,y)^2/t}\frac{dt}{t^{1-\alpha/2}} \\
&\leqslant \frac1{\|x-y\|^2}\bigg(\int_0^{d(x,y)^2} + \int_{d(x,y)^2}^\infty\bigg) \frac1{V(x,y,\sqrt t)}e^{-cd(x,y)^2/t}t^\alpha dt\\
&=: I_1 +I_2.
\end{align*}
For $t\leqslant d(x,y)^2$, by using the doubling condition we have
that
$$\omega(B(x,d(x,y)))\lesssim \Big(\frac {d(x,y)}{\sqrt
t}\Big)^{\mathbf N}\omega(B(x,\sqrt t))$$ and hence
\begin{equation}\label{eq4.7}
V(x,y,\sqrt t)^{-1}\lesssim \frac1{\omega(B(x,\sqrt t))}\lesssim
\Big(\frac {d(x,y)}{\sqrt t}\Big)^{\mathbf
N}\frac1{\omega(B(x,d(x,y)))}.
\end{equation}
We obtain
\begin{align*}
I_1&\lesssim   \frac1{\|x-y\|^2}\frac1{\omega(B(x,d(x,y)))} \int_0^{d(x,y)^2}\Big(\frac {d(x,y)}{\sqrt t}\Big)^{\mathbf N}e^{-cd(x,y)^2/t} t^{\alpha/2} dt\\
&\lesssim   \frac1{\|x-y\|^2}\frac1{\omega(B(x,d(x,y)))} \int_0^{d(x,y)^2}\frac {d(x,y)^{\mathbf N}}{t^{\frac{\mathbf N}2}}\Big(\frac{t}{d(x,y)^2}\Big)^{\frac{\mathbf N}2} t^{\alpha/2} dt\\
&\lesssim  \frac{d(x,y)^{2+\alpha}}{\|x-y\|^2}\frac1{\omega(B(x,d(x,y)))}.
\end{align*}
It is clear that for $t\geqslant d(x,y)^2$, we get
\begin{align*}
I_2&\lesssim \frac1{\|x-y\|^2}\int_{d(x,y)^2}^\infty \frac1{V(x,y,d(x,y))}\frac {d(x,y)^{4+\alpha}}{t^{2+\alpha/2}}t^{\alpha/2} dt\\
&\lesssim  \frac{d(x,y)^{2+\alpha}}{\|x-y\|^2}\frac1{\omega(B(x,d(x,y)))}.
\end{align*}
To see the smoothness estimates, we write
\begin{align*}
|K_\alpha(x,y)-K_\alpha(x,y')|
&\leqslant c \int_0^\infty |h_t(x,y)-h_t(x,y')|\frac{dt}{t^{1-\alpha/2}}  \\
&\leqslant c\bigg(\int_0^{d(x,y)^2} +
\int_{d(x,y)^2}^\infty\bigg)
|h_t(x,y)-h_t(x,y')|\frac{dt}{t^{1-\alpha/2}} \\
&=: I\!I_1 +I\!I_2.
\end{align*}
Since $\|y-y'\|<\frac12 d(x,y)$, we have $d(x,y')\le \frac32d(x,y)$.
If $\|y-y'\|\ge \sqrt t$, we use the above condition (a) give
$$|h_t(x,y)-h_t(x,y')|\lesssim \Big(\frac {\|y-y'\|}{\sqrt t}\Big)\frac1{V(x,y,\sqrt t)}\bigg(1+\frac{\|x-y\|}{\sqrt t}\bigg)^{-2}e^{-cd(x,y)^2/t}. $$
If $\|y-y'\|< \sqrt t$, we use the above condition (a) give
$$|h_t(x,y)-h_t(x,y')|\lesssim \Big(\frac {\|y-y'\|}{\sqrt t}\Big)\frac1{V(x,y,\sqrt t)}\bigg(1+\frac{\|x-y\|}{\sqrt t}\bigg)^{-2}e^{-cd(x,y)^2/t}. $$
Hence, \eqref{eq4.7} gives
\begin{align*}
I\!I_1&\lesssim  \frac{\|y-y'\|}{\|x-y\|^2}\int_0^{d(x,y)^2}
\frac1{V(x,y,\sqrt t)}
e^{-cd(x,y)^2/t}\frac{dt}{t^{\frac12-\alpha}}\\
&\lesssim  \frac{\|y-y'\|}{\|x-y\|^2}\frac1{\omega(B(x,d(x,y)))}\int_0^{d(x,y)^2} \Big(\frac {d(x,y)}{\sqrt
t}\Big)^{\bf N}\Big(\frac t{d(x,y)^2}\Big)^{\frac {\bf N}2+\frac12-\frac\alpha2} \frac{dt}{t^{\frac{1-\alpha}2}}\\
&\lesssim \frac{\|y-y'\|}{\|x-y\|^2}d(x,y)^{1+\alpha}\frac1{\omega(B(x,d(x,y)))}.
\end{align*}

To estimate $I\!I_2$, the reverse doubling condition gives
\begin{align*}
I\!I_2&\lesssim  \frac{\|y-y'\|}{\|x-y\|^2}\int_{d(x,y)^2}^\infty
\frac1{V(x,y,\sqrt t)}
e^{-cd(x,y)^2/t}\frac{dt}{t^{\frac{1-\alpha}2}}\\
&\lesssim \frac{\|y-y'\|}{\|x-y\|^2}\frac1{\omega(B(x,d(x,y)))}\int_{d(x,y)^2}^\infty \Big(\frac {d(x,y)}{\sqrt
t}\Big)^N\Big(\frac t{d(x,y)^2}\Big)^{\frac N2-\frac32-\frac\alpha2} \frac{dt}{t^{\frac{1-\alpha}2}}\\
&\lesssim \frac{\|y-y'\|}{\|x-y\|^2}d(x,y)^{1+\alpha}\frac1{\omega(B(x,d(x,y)))}.
\end{align*}
The estimate of the smoothness in the $x$ variable is similar.  
\end{proof}

\begin{thm}\label{lpq bounded}
Let $0<\alpha< {\bf N}$. If $f\in L^p(\Bbb R^N, \omega), 1<p<q<\infty$ with $1/q=1/p-\alpha/{\bf N}$, 
then $$\|\triangle_{\rm D}^{-\alpha/2}f\|_{L^q_\omega} \lesssim \|f\|_{L^p_\omega} .$$
\end{thm}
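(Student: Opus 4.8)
The plan is to run the classical Hardy--Littlewood--Sobolev argument on the space of homogeneous type $(\mathbb R^N,\|\cdot\|,\omega)$, using the Dunkl metric $d$ only as a bookkeeping device. Writing $\triangle_{\rm D}^{-\alpha/2}f(x)=\int_{\mathbb R^N}K_\alpha(x,y)f(y)\,d\omega(y)$, Lemma \ref{lem 4.5}(a) together with $d(x,y)\le\|x-y\|$ gives the clean bound
$$|K_\alpha(x,y)|\lesssim \frac{d(x,y)^{\alpha}}{\omega(B(x,d(x,y)))},$$
so it is enough to estimate $I_\alpha|f|(x):=\int_{\mathbb R^N}\frac{d(x,y)^{\alpha}}{\omega(B(x,d(x,y)))}|f(y)|\,d\omega(y)$.

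For every $r>0$ I would split this integral over $\{y:d(x,y)<r\}$ and $\{y:d(x,y)\ge r\}$. For the near part, decomposing into Dunkl-dyadic shells $2^{-j-1}r\le d(x,y)<2^{-j}r$ and using $\{y:d(x,y)<s\}=\mathcal O(B(x,s))=\bigcup_{\sigma\in G}B(\sigma(x),s)$ together with $\omega(B(\sigma(x),s))\sim\omega(B(x,s))$, one gets the bound $r^{\alpha}\sum_{\sigma\in G}Mf(\sigma(x))=:r^{\alpha}\widetilde Mf(x)$, where $M$ is the Hardy--Littlewood maximal operator on $(\mathbb R^N,\|\cdot\|,\omega)$. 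For the far part, Hölder's inequality gives $\le\|f\|_{L^p_\omega}\big(\int_{d(x,y)\ge r}|K_\alpha(x,y)|^{p'}d\omega(y)\big)^{1/p'}$; decomposing again into shells, estimating the shell measure by $\omega(\mathcal O(B(x,s)))\le|G|\,\omega(B(x,s))$ in the numerator and using the uniform lower bound $\omega(B(x,s))\gtrsim s^{\bf N}$ in the denominator, the resulting series $\sum_{j\ge0}(2^jr)^{\alpha p'-{\bf N}(p'-1)}$ converges exactly because $p<{\bf N}/\alpha$ (which is equivalent to $1/q>0$), and one obtains $\big(\int_{d(x,y)\ge r}|K_\alpha(x,y)|^{p'}d\omega(y)\big)^{1/p'}\lesssim r^{\alpha-{\bf N}/p}$.

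Combining the two parts gives, for all $r>0$, $|\triangle_{\rm D}^{-\alpha/2}f(x)|\lesssim r^{\alpha}\widetilde Mf(x)+r^{\alpha-{\bf N}/p}\|f\|_{L^p_\omega}$; optimizing in $r$ (the choice $r\sim(\|f\|_{L^p_\omega}/\widetilde Mf(x))^{p/{\bf N}}$) and using $1-\alpha p/{\bf N}=p/q$ produces the Hedberg-type inequality
$$|\triangle_{\rm D}^{-\alpha/2}f(x)|\lesssim \|f\|_{L^p_\omega}^{1-p/q}\big(\widetilde Mf(x)\big)^{p/q}.$$
Raising this to the power $q$, integrating $d\omega$, and invoking the $L^p_\omega$-boundedness of $M$ (valid since $p>1$) together with the $G$-invariance of $\omega$ (so that $\|Mf(\sigma(\cdot))\|_{L^p_\omega}=\|Mf\|_{L^p_\omega}$) yields $\|\triangle_{\rm D}^{-\alpha/2}f\|_{L^q_\omega}\lesssim\|f\|_{L^p_\omega}$.

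Most of this is routine once Lemma \ref{lem 4.5} is in hand; the two points needing care are Dunkl-specific. First, the sharp Sobolev exponent ${\bf N}$ — rather than the smaller exponent $N$ that one would get from \eqref{rd} alone — is recovered only because $\omega(B(x,r))\sim r^N\prod_{\alpha\in R}(|\langle\alpha,x\rangle|+r)^{\kappa(\alpha)}\ge r^{\bf N}$ holds uniformly in $x$. Second, the orbit sets $\mathcal O(B(x,s))$ that appear when one rewrites $\{y:d(x,y)<s\}$ must be absorbed using the doubling property, the finiteness of $G$, and the invariance $\omega(B(\sigma(x),s))\sim\omega(B(x,s))$, which is precisely what allows the argument to move freely between the Dunkl metric and the Euclidean metric.
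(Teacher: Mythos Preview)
Your argument is correct and follows essentially the same Hedberg-type scheme as the paper: bound $|K_\alpha(x,y)|$ via Lemma~\ref{lem 4.5}, split the integral at a level $d(x,y)=r$, control the near part by dyadic shells and a maximal function, control the far part by H\"older plus the lower bound $\omega(B(x,s))\gtrsim s^{\bf N}$, and optimise in $r$. The only cosmetic differences are that you first simplify the kernel bound to $d(x,y)^{\alpha}/\omega(B(x,d(x,y)))$ before starting (the paper carries the factor $d(x,y)^{2+\alpha}/\|x-y\|^2$ through the near part), and that you make the orbit structure in the near part explicit via $\widetilde Mf(x)=\sum_{\sigma\in G}Mf(\sigma(x))$, whereas the paper absorbs this directly into the maximal function.
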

\begin{proof}
From Lemma \ref{lem 4.5}, we have 
\begin{align*} 
|\triangle_{\rm D}^{-\alpha/2} f(x)|&= \bigg|\int_{\mathbb R^N} K_\alpha(x,y)f(y)d\omega(y)\bigg|\\
&\leq\int_{d(x,y)\leq s}  \frac{d(x,y)^{2+\alpha}}{\|x-y\|^2}\frac1{\omega(B(x,d(x,y)))}|f(y)|d\omega(y)\\
&\quad+\int_{d(x,y)>s}  \frac{d(x,y)^{2+\alpha}}{\|x-y\|^2}\frac1{\omega(B(x,d(x,y)))}|f(y)|d\omega(y)\\
&=:J_1+J_2.
\end{align*}
For $J_1$, by decomposing the set $\{{d(x,y)\leq s}\}$ into annuli and by noting that $s<d(x,y)\leq \|x-y\|$, we have
\begin{align*} 
J_1&\leq\sum_{j=0}^\infty\int_{2^{-j-1}s< d(x,y)\leq 2^{-j}s}  \frac{d(x,y)^{2+\alpha}}{\|x-y\|^2}\frac1{\omega(B(x,d(x,y)))}|f(y)|d\omega(y)\\
&\leq\sum_{j=0}^\infty\int_{2^{-j-1}s< d(x,y)\leq 2^{-j}s}  \frac{ (2^{-j}s)^{2+\alpha}}{(2^{-j-1}s)^2}\frac1{\omega(B(x,2^{-j-1}s))}|f(y)|d\omega(y)\\
&\leq C\sum_{j=0}^\infty(2^{-j}s)^{\alpha} \frac1{\omega(B(x,2^{-j}s))}\int_{d(x,y)\leq 2^{-j}s}  |f(y)|d\omega(y)\\
&\leq Cs^\alpha M(f)(x),
\end{align*}
where $M$ is the Hardy--Littlewood maximal function on the space of homogeneous type $(\mathbb R^N, \|\cdot\|, \omega)$.

For $J_2$, by noting that $s<d(x,y)\leq \|x-y\|$, we have
\begin{align*} 
J_2&\leq \bigg(\int_{d(x,y)>s}  \frac{d(x,y)^{(2+\alpha)p'}}{\|x-y\|^{2p'}}\frac1{\omega(B(x,d(x,y)))^{p'}}d\omega(y) \bigg)^{1\over p'}\|f\|_{L^p_\omega}\\
&\lesssim \bigg(\int_{d(x,y)>s}  d(x,y)^{\alpha p'}\frac1{\omega(B(y,d(x,y)))^{p'}}d\omega(y) \bigg)^{1\over p'}\|f\|_{L^p_\omega}\\
&\lesssim \bigg(\int_{d(x,y)>s} \frac{d(x,y)^{\alpha p'}}{d(x,y)^{N p'} \prod\limits_{\alpha\in R}\big(|\langle\alpha,y\rangle|+d(x,y)\big)^{\kappa(\alpha)p'}} \prod_{\alpha\in R}\big(|\langle\alpha,y\rangle|^{\kappa(\alpha)}dy \bigg)^{1\over p'}\|f\|_{L^p_\omega}\\
&\lesssim \bigg(\int_{d(x,y)>s}  \frac1{d(x,y)^{N p'-\alpha p'+\sum\limits_{\alpha\in R}\kappa(\alpha)(p'-1)} } dy \bigg)^{1\over p'}\|f\|_{L^p_\omega}\\
&\lesssim \bigg(\int_{t>s}  \frac1{t^{{\bf N}( p'-1)-\alpha p'+1}} dt \bigg)^{1\over p'}\|f\|_{L^p_\omega}\\
&\lesssim s^{\frac {\bf N}{p'}-{\bf N}+\alpha}\|f\|_{L^p_\omega}.
\end{align*}
Hence 
$$|\triangle_{\rm D}^{-\alpha/2} f(x)|\lesssim s^\alpha M(f)(x)+s^{\frac {\bf N}{p'}-{\bf N}+\alpha}\|f\|_{L^p_\omega}.$$
Choosing $s=\Big(\frac{\|f\|_{L^p_\omega}}{Mf(x)}\Big)^{\frac p{\bf N}}$, we obtain
$$|\triangle_{\rm D}^{-\alpha/2} f(x)|\lesssim \|f\|_{L^p_\omega}^{\frac{\alpha p}{\bf N}}Mf(x)^{1-\frac{\alpha p}{\bf N}}.$$
Note that $({\bf N}-\alpha p)q=p{\bf N}$.
$$\|\triangle_{\rm D}^{-\alpha/2} f\|_{L^q_\omega}\lesssim  \|f\|_{L^p_\omega}^{\frac{\alpha p}{\bf N}}
\|Mf\|^{1-\frac{\alpha p}{\bf N}}_{L^p_\omega}\lesssim \|f\|_{L^p_\omega}^{\frac{\alpha p}{\bf N}}
\|f\|^{1-\frac{\alpha p}{\bf N}}_{L^p_\omega}=\|f\|_{L^p_\omega}.$$
The proof of Theorem \ref{lpq bounded} is complete.
\end{proof}

Suppose for each cube $Q$ we have a function $h^Q$, defined on this cube. Let $\beta\ge 0$ and define
$$h_\beta(x)=\sup_{Q\ni x}\frac1{\omega(Q)^{1+\frac \beta{\bf N}}}
                       \int_Q |h^Q(y)|d\omega(y),\qquad x\in \mathbb R^N,$$
where the supremum is taken over all the cubes $Q\subseteq \mathbb R^N$ such that $x\in Q$.
To prove Theorem  \ref{main 2}, we need the following lemma.

\begin{lem}\label{pq lemma}
Let $0\le \beta<\alpha<\infty$ and $1<p<q<\infty, 1/p-1/q=(\alpha-\beta)/{\bf N}$. Suppose for each cube $Q$ we have a function $h^Q$, defined on this cube.
Then, 
\begin{equation}\label{lemma 5.1}
\|h_\beta\|_{L^q_\omega} \lesssim \|h_\alpha\|_{L^p_\omega},
\end{equation}
where the implied constant depends only on $p,q,\alpha$ and $\omega$.
\end{lem}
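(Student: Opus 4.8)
The plan is to dominate $h_\beta$ pointwise by a fractional maximal function of $h_\alpha$ built from the measure $\omega$, and then to run Hedberg's interpolation trick to get the $L^p_\omega\to L^q_\omega$ bound. Set $\gamma:=(\alpha-\beta)/{\bf N}$; since $1/p-1/q=\gamma$ with $1<p<q<\infty$ one has $0<\gamma<1/p<1$, and in particular $1<p<1/\gamma$. First I would observe that for every cube $Q$ and every $y\in Q$ the definition of $h_\alpha$ gives $h_\alpha(y)\ge\omega(Q)^{-1-\alpha/{\bf N}}\int_Q|h^Q|\,d\omega$, so $\omega(Q)^{-1-\alpha/{\bf N}}\int_Q|h^Q|\,d\omega$ is bounded above by the average of $h_\alpha$ over $Q$:
\[
\frac1{\omega(Q)^{1+\alpha/{\bf N}}}\int_Q|h^Q|\,d\omega\ \le\ \frac1{\omega(Q)}\int_Q h_\alpha\,d\omega .
\]
Multiplying by $\omega(Q)^{\gamma}$ and taking the supremum over all cubes $Q\ni x$ yields
\[
h_\beta(x)\ \le\ \sup_{Q\ni x}\omega(Q)^{\gamma-1}\int_Q h_\alpha\,d\omega\ =:\ M^\omega_\gamma(h_\alpha)(x),
\]
the fractional maximal operator of order $\gamma$ associated with the doubling measure $\omega$. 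I would stress here that the normalisation $\omega(Q)^{1+\beta/{\bf N}}$ appearing in $h_\beta$ — measured against $\omega(Q)$ and not against $\ell(Q)$ — is exactly what makes $M^\omega_\gamma$ (rather than a Euclidean fractional maximal operator) the right object; this is the point where the homogeneous dimension ${\bf N}$ enters.

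It then remains to prove $\|M^\omega_\gamma g\|_{L^q_\omega}\lesssim\|g\|_{L^p_\omega}$ for $1<p<1/\gamma$ and $1/q=1/p-\gamma$, which I would apply with $g=h_\alpha$ (assuming, as we may, that $\|h_\alpha\|_{L^p_\omega}<\infty$). For a cube $Q\ni x$, on one hand H\"older's inequality gives $\omega(Q)^{\gamma-1}\int_Q|g|\,d\omega\le\omega(Q)^{\gamma-1/p}\|g\|_{L^p_\omega}$, and on the other hand trivially $\omega(Q)^{\gamma-1}\int_Q|g|\,d\omega\le\omega(Q)^{\gamma}M^\omega g(x)$, where $M^\omega$ is the Hardy--Littlewood maximal operator on $(\mathbb R^N,\|\cdot\|,\omega)$. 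Since $\gamma-1/p<0<\gamma$, balancing these two bounds in $\omega(Q)$ (equality at $\omega(Q)^{1/p}=\|g\|_{L^p_\omega}/M^\omega g(x)$) gives the Hedberg estimate
\[
M^\omega_\gamma g(x)\ \lesssim\ \|g\|_{L^p_\omega}^{\,p\gamma}\,\big(M^\omega g(x)\big)^{1-p\gamma}.
\]
Raising to the power $q$, integrating, and using the two identities $(1-p\gamma)q=p$ and $p\gamma q+p=q$ (both just restatements of $1/q=1/p-\gamma$), the Hardy--Littlewood maximal theorem ($p>1$, $\omega$ doubling by \eqref{rd}) yields $\|M^\omega_\gamma g\|_{L^q_\omega}^q\lesssim\|g\|_{L^p_\omega}^{p\gamma q}\,\|M^\omega g\|_{L^p_\omega}^p\lesssim\|g\|_{L^p_\omega}^{\,q}$. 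Combining this with the pointwise bound of the first paragraph proves \eqref{lemma 5.1}.

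I do not expect a genuine analytic obstacle here: the argument is the classical Hedberg trick, and the only thing that needs care is the bookkeeping of exponents — checking that $\gamma=(\alpha-\beta)/{\bf N}\in(0,1)$, that $p<1/\gamma$, and that the balance of powers in the Hedberg step is precisely the one dictated by $1/p-1/q=(\alpha-\beta)/{\bf N}$. The one structural point to keep in mind is that the measure $\omega$ is not Ahlfors regular, so all scaling must be expressed through powers of $\omega(Q)$; since the definition of $h_\beta$ is already stated that way, this causes no trouble. Alternatively, once the pointwise bound $h_\beta\le M^\omega_\gamma(h_\alpha)$ is in hand, one may simply invoke the known $L^p_\omega\to L^q_\omega$ mapping property of fractional maximal operators on spaces of homogeneous type equipped with a doubling measure, and dispense with the second paragraph altogether.
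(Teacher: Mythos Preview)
Your proof is correct. Both you and the paper begin with the same pointwise observation, namely that for any cube $Q$ and any $y\in Q$ one has $h_\alpha(y)\ge\omega(Q)^{-1-\alpha/{\bf N}}\int_Q|h^Q|\,d\omega$, which immediately gives $h_\beta\lesssim$ ``a fractional object of order $\gamma=(\alpha-\beta)/{\bf N}$ applied to $h_\alpha$''. The routes then diverge: the paper bounds by (the $r$-th root of) a fractional \emph{integral} $I^{r\gamma}$ applied to $h_\alpha^r$ on the homogeneous-type space $(\mathbb R^N,\|\cdot\|,\omega)$ and invokes the $L^{p/r}\to L^{q/r}$ mapping property of $I^{r\gamma}$ from \cite{Pan}, whereas you bound by the fractional \emph{maximal} function $M^\omega_\gamma(h_\alpha)$ and then prove its $L^p_\omega\to L^q_\omega$ boundedness from scratch via Hedberg's pointwise inequality $M^\omega_\gamma g\lesssim\|g\|_{L^p_\omega}^{p\gamma}(M^\omega g)^{1-p\gamma}$ and the ordinary maximal theorem. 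Your argument is more elementary and self-contained (no external citation, no auxiliary exponent $r$); the paper's version, by contrast, routes the estimate through a stronger operator (the fractional integral dominates the fractional maximal function pointwise), which is slightly heavier but has the advantage of quoting a ready-made result.
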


\begin{proof}
Let $0<r<\frac{\bf N}{\alpha-\beta}$ and let $Q$ be a cube in $\mathbb R^N$. We have that
\begin{align*}
\frac1{\omega(Q)^{1+\frac \beta{\bf N}}}
                       \int_Q |h^Q|d\omega
&\le \omega(Q)^{\frac{\alpha-\beta}{\bf  N}}\inf_{u\in Q} h_\alpha(u) \\
&\le \omega(Q)^{\frac{\alpha-\beta}{\bf  N}-\frac1r}
       \bigg(\int_Q h_\alpha(u)^r d\omega(u)\bigg)^{1/r} \\
&=\bigg(\frac1{\omega(Q)^{1-\frac{r(\alpha-\beta)}{\bf  N}}}
         \int_Q h_\alpha(u)^r d\omega(u)\bigg)^{1/r} \\
&\lesssim \bigg(\int_Q \frac{h_\alpha(u)^r}{\omega(B(x,|x-u|))^{1-\frac{r(\alpha-\beta)}{\bf  N}}}
         d\omega(u)\bigg)^{1/r}, \quad x\in Q. 
\end{align*}
We define, for every $0<\gamma<1$, the fractional integral $I^\gamma$ by
$$I^\gamma(f)(x)=\int_{\mathbb R^N} \frac{f(u)}{\omega(B(x,|x-u|))^{1-\gamma}}d\omega(u).$$
Then, 
\begin{equation}\label{hbeta}
h_\beta(x)\lesssim \{I^\gamma[h_\alpha]^r(x)\}^{1/r},\qquad x\in \mathbb R^N.
\end{equation}
It follows from \cite[Theorem 1]{Pan} that the operator $I^\gamma$ maps $L^{\tilde p}$ boundedly into $L^{\tilde q}$ whenever $1<\tilde p<\tilde q$ and $\frac1{\tilde p}=\frac1{\tilde q}+\gamma$.
Let $\tilde p=p/r$ and $\tilde q=q/r$ with $r<p$ and $\gamma=\frac{r(\alpha-\beta)}{\bf  N}<1$ as above. Then with $g=I^\gamma[(h_\alpha)^r]$, we have from \eqref{hbeta}
\begin{align*}
\|h_\beta\|_{L^q_\omega}
&\lesssim \|g^{1/r}\|_{L^q_\omega}
   =\|g\|^{1/r}_{L^{\tilde q}_\omega}\\
&\lesssim  \|(h_\alpha)^r\|^{1/r}_{L^{\tilde p}_\omega}\\
&=\|h_\alpha\|_{L^p_\omega},
\end{align*}
which is \eqref{lemma 5.1}. The proof is complete.
\end{proof}

We now prove Theorem \ref{main 2}.

\begin{proof}[Proof of Theorem \ref{main 2}]

Fix $x\in \mathbb R^N, k\in \mathbb Z$. Set $B=B(x, {\mathfrak R}^k)$ and  $f\in  L^p_\omega$ is a  compactly supported function.  Let $f_1, f_2$ and $f_{\sigma_j}$ be the decomposition of $f$ in \eqref{eq decom}. 
Observe that $[b,\triangle_{\rm D}^{-\alpha/2}]f=[b-b_B,\triangle_{\rm D}^{-\alpha/2}]f$. For $y\in \mathbb R^N$ we set
\begin{align*} 
&[b,\triangle_{\rm D}^{-\alpha/2}]f_1(y)=(b(y)-b_B)\triangle_{\rm D}^{-\alpha/2}f_1(y)+\triangle_{\rm D}^{-\alpha/2}((b_B-b)f_1)(y) =: h_{11}(y)+h_{12}(y),\\
&[b,\triangle_{\rm D}^{-\alpha/2}]f_2(y)=(b(y)-b_B)\triangle_{\rm D}^{-\alpha/2}f_2(y)+\triangle_{\rm D}^{-\alpha/2}((b_B-b)f_2)(y) =: h_{21}(y)+h_{22}(y),\\
&[b,\triangle_{\rm D}^{-\alpha/2}]f_{\sigma_j}(y)=(b(y)-b_B)\triangle_{\rm D}^{-\alpha/2}f_{\sigma_j}(y)+\triangle_{\rm D}^{-\alpha/2}((b_B-b)f_{\sigma_j})(y) =: h_{\sigma_j1}(y)+h_{\sigma_j2}(y).
\end{align*}     
We choose the index $t$ such that $1<t<p$ and let $t'$ be the conjugate of the index $t$. H\"older's inequality gives
\begin{align*}
&\frac1{\omega(B) \ell(B)^\beta}\int_B |h_{11}(y)-(h_{11})_B|d\omega(y)\le \frac2{\omega(B) \ell(B)^\beta}\int_B |h_{11}(y)|d\omega(y)\\
&\qquad\le \frac2{\omega(B) \ell(B)^\beta}\int_B |(b(y)-b_B)\triangle_{\rm D}^{-\alpha/2}f_1(y)|d\omega(y)\\
&\qquad\le \frac2{\ell(B)^\beta} \Big(\frac1{\omega(B)} \int_B |b(y)-b_B|^{t'} d\omega(y)\Big)^{\frac1{t'}} 
           \Big(\frac1{\omega(B)}\int_B |\triangle_{\rm D}^{-\alpha/2}f_1(y)|^{t}d\omega(y)\Big)^{\frac 1t}\\ 
&\qquad\le C\|b\|_{\Lambda^\beta}M_t(\triangle_{\rm D}^{-\alpha/2}f_1)(x),
\end{align*}
Thus, we use Theorems \ref{thm 4.2} and \ref{lpq bounded} to obtain
\begin{equation}\label{h11}
 \|h_{11}\|_{\dot F_{q,{\rm D}}^{\beta,\infty}} \lesssim \|b\|_{\Lambda^\beta}\|M_t(\triangle_{\rm D}^{-\alpha/2}f_1)\|_{L^q_\omega}\lesssim \|b\|_{\Lambda^\beta}\|\triangle_{\rm D}^{-\alpha/2}f_1\|_{L^q_\omega}\lesssim  \|b\|_{\Lambda^\beta}\|f_1\|_{L^p_\omega}.
\end{equation}
Similarly, we also have
\begin{equation}\label{h21}
 \|h_{21}\|_{\dot F_{q,{\rm D}}^{\beta,\infty}}+\sum_{j=1}^{|G|-1} \|h_{\sigma_j1}\|_{\dot F_{q,{\rm D}}^{\beta,\infty}} \lesssim \|b\|_{\Lambda^\beta}\Big(\|f_2\|_{L^p_\omega}+ \sum_{j=1}^{|G|-1} \|f_{\sigma_j}\|_{L^p_\omega}\Big).
 \end{equation}
 To deal with $h_{12}$, Theorems \ref{thm 4.2} gives
 \begin{align*}
 \|h_{12}\|_{\dot F_{q,{\rm D}}^{\beta,\infty}} 
&\sim \Big\|\sup_{k\in \mathbb Z} \frac1{\omega(B(\cdot, {\mathfrak R}^k)) \ell(B(\cdot, {\mathfrak R}^k))^\beta}\int_{B(\cdot, {\mathfrak R}^k)} |h_{12}(y)-(h_{12})_{B(\cdot, {\mathfrak R}^k)}|d\omega(y)\Big\|_{L^q_\omega}\\
&\lesssim \Big\|\sup_{k\in \mathbb Z}\frac1{\ell(B(\cdot, {\mathfrak R}^k))^\beta\omega(B(\cdot, {\mathfrak R}^k))^{1+\frac{\alpha}{\bf N}}} \int_{B(\cdot, {\mathfrak R}^k)} |h_{12}(y)-(h_{12})_{B(\cdot, {\mathfrak R}^k)}|d\omega(y)\Big\|_{L^p_\omega},
\end{align*} 
where the last inequality follows from Lemma \ref{pq lemma} for 
$h^Q=\frac1{\ell(B(\cdot, {\mathfrak R}^k))^\beta}|h_{12}(y)-(h_{12})_{B(\cdot, {\mathfrak R}^k)}|$.
We choose $s, 1<s<p$, and $\bar s$ such that $\frac 1s-\frac 1{\bar s}=\frac{\alpha}{\bf N}$. 
By H\"older's inequality,
\begin{align*}
&\frac1{\omega(B)^{1+\frac{\alpha}{\bf N}} \ell(B)^\beta}\int_B |h_{12}(y)-(h_{12})_B|d\omega(y)\\
&\qquad\le \frac2{\omega(B)^{1+\frac{\alpha}{\bf N}} \ell(B)^\beta}\int_B |h_{12}(y)|d\omega(y)\\
&\qquad= \frac2{\omega(B)^{1+\frac{\alpha}{\bf N}} \ell(B)^\beta}\int_B |\triangle_{\rm D}^{-\alpha/2}((b-b_B)f_1)(y)|d\omega(y)\\
 &\qquad\lesssim\frac1{\omega(B)^{1+\frac{\alpha}{\bf N}} \ell(B)^\beta}\|\triangle_{\rm D}^{-\alpha/2}((b-b_B)f_1)\|_{L^{\bar s}_\omega}\ \omega(B)^{1-\frac1{\bar s}}\\
 &\qquad\lesssim \frac1{\omega(B)^{\frac 1s} \ell(B)^\beta}\|\triangle_{\rm D}^{-\alpha/2}((b-b_B)f_1)\|_{L^{\bar s}_\omega}.
\end{align*}
Then we use Theorem \ref{lpq bounded} to obtain 
$$
\frac1{\omega(B)^{1+\frac{\alpha}{\bf N}} \ell(B)^\beta}\int_B |\triangle_{\rm D}^{-\alpha/2}((b-b_B)f_1)(y)|d\omega(y)\lesssim \frac1{\omega(B)^{\frac1s} \ell(B)^\beta}\|(b-b_B)f_1\|_{L^{ s}_\omega}.
$$
Let $s_1>1$ with $s_1s<p$ and $s_1'$ be the conjugate of the index $s_1$,
we use H\"older's inequality and \eqref{norm-d} to obtain that
\begin{align*}
\frac1{\omega(B)}\|(b-b_B)f_1\|^s_{L^s_\omega}
&=\frac1{\omega(B)}\int_{5B} |b(y)-b_B|^s|f|^sd\omega(y) \\
&\le {  \Big(\frac1{\omega(B)}\int_{5B} |b(y)-b_B|^{ss_1'}d\omega(y) \Big)^{\frac1{s_1'}}}
         \Big(\frac1{\omega(B)}\int_{5B} |f|^{ss_1} d\omega(y)\Big)^{\frac1{s_1}}\\     
&\lesssim  \ell(B)^{\beta s}\|b\|^s_{\Lambda^\beta} M^s_{ss_1}(f)(x)         
\end{align*}
the $L^p_\omega$-boundedness of the maximal function $M_{ss_1}$ implies that 
\begin{equation}\label{h12}
 \|h_{12}\|_{\dot F_{q,{\rm D}}^{\beta,\infty}} 
 \lesssim \|b\|_{\Lambda^\beta}\|f\|_{L^p_\omega}.
\end{equation}
We turn to  analyze $h_{22}$. Observe that for $z\notin {\mathcal O}(5B)$ and $y\in B$ we have $\|x-y\|\le d(x,z)/2$.
Let $\Gamma$ be a fixed closed Weyl chamber such that $x\in \Gamma$, then Lemma \ref{lem 4.5} gives
\begin{align*}
|h_{22}(y)-h_{22}(x)|
&=\bigg|\int_{\mathbb R^N} (K_\alpha(y,z)-K_\alpha(x,z))(b(z)-b_B)f_2(z)d\omega(z) \bigg|\\
&\le \sum_{\sigma\in G}\int_{\sigma(\Gamma)} |K_\alpha(y,z)-K_\alpha(x,z)||b(z)-b_B||f_2(z)|d\omega(z)\\
&\lesssim \sum_{\sigma\in G}\int_{\sigma(\Gamma)} \frac{\|y-x\|}{\|x-z\|^2}d(x,z)^{1+\alpha}
          \frac1{\omega(B(x,d(x,z)))}|b(z)-b_B| |f_2(z)|d\omega(z).
\end{align*}
Note that 
$$\omega(B(x,r))\sim r^N \prod_{\alpha\in R}\big(|\langle\alpha,x\rangle|+r\big)^{\kappa(\alpha)}\ge r^{\bf N}.$$
By the same way as in the estimate $J_\sigma(x,y)$, we have
\begin{align*}
&\frac1{\omega(B)^{\frac{\alpha}{\bf N}} \ell(B)^\beta}|h_{22}(y)-h_{22}(x)|\\
& \lesssim \frac1{\omega(B)^{\frac{\alpha}{\bf N}} \ell(B)^\beta}\sum_{\sigma\in G}\int_{\sigma(\Gamma)} \frac{{\mathfrak R}^k}{\|x-z\|^2}
          \frac{d(x,z)^{1+\alpha}}{\omega(B(\sigma(x),d(x,z)))}|b_{B(\sigma(x),{\mathfrak R}^k)}-b_B| |f_2(z)|d\omega(z)\\
          &\quad+\frac1{\omega(B)^{\frac{\alpha}{\bf N}} \ell(B)^\beta}\sum_{\sigma\in G} \frac{{\mathfrak R}^k}{\|x-z\|^2}
          \frac{d(x,z)^{1+\alpha}}{\omega(B(\sigma(x),d(x,z)))}|b_{B(\sigma(x),{\mathfrak R}^k)}-b(z)| |f_2(z)|d\omega(z)\\
          &\lesssim \|b\|_{\Lambda^\beta}\sum_{\sigma\in G}M(f_2)(\sigma(x))\sum_{j=2}^\infty 2^{-j(1-\alpha-\beta)}\\
          &\lesssim \|b\|_{\Lambda^\beta}\sum_{\sigma\in G}M(f_2)(\sigma(x))
\end{align*}
for $\alpha+\beta<1$.
Thus, we use Proposition \ref{defference} and Theorem \ref{lpq bounded} to obtain
\begin{align*}
\|h_{22}\|_{\dot F_{q,{\rm D}}^{\beta,\infty}}
&{\lesssim  \Big\|\sup_{k\in \mathbb Z}\frac1{\ell(B(\cdot,{\mathfrak R}^k))^\beta \omega(B(\cdot,\mathfrak R^k))} \int_{B(\cdot,{\mathfrak R}^k)} |h_{22}(y)-h_{22}(\cdot)|d\omega(y)\Big\|_{L^q_\omega}}\\
&{\lesssim  \Big\|\sup_{k\in \mathbb Z}\frac1{\ell(B(\cdot,{\mathfrak R}^k))^\beta\omega(B(\cdot,{\mathfrak R}^k))^{1+\frac{\alpha}{\bf N}}} \int_{B(\cdot,{\mathfrak R}^k)} |h_{22}(y)-h_{22}(\cdot)|\Big\|_{L^p_\omega}}\\
&\lesssim  \Big\|\sup_{k\in \mathbb Z}\frac1{\ell(B(\cdot,{\mathfrak R}^k))^\beta\omega(B(\cdot,{\mathfrak R}^k))^{\frac{\alpha}{\bf N}}} \sup_{y\in B(\cdot,{\mathfrak R}^k)} |h_{22}(y)-h_{22}(\cdot)|\Big\|_{L^p_\omega}\\
&\lesssim   \|b\|_{\Lambda^\beta}{\sum_{\sigma\in G}}\|M(f_2)(\sigma(\cdot))\|_{L^p_\omega}\\
&\lesssim  \|b\|_{\Lambda^\beta}\|f\|_{L^p_\omega}
\end{align*}
for $\alpha+\beta<1$.
{
Note that Theorem \ref{thm 3.12} establishes the equivalence between the Dunkl--Triebel--Lizorkin spaces and the well-understood Triebel–Lizorkin spaces on spaces of homogeneous type. Note again $\omega(B(x,r))\sim r^N \prod_{\alpha\in R}\big(|\langle\alpha,x\rangle|+r\big)^{\kappa(\alpha)}\ge r^{\bf N}$ for all $0<r<\infty$. From this lower bound of the measure of the ball $B(x,r)$, by using  \cite[Theorem 5.1 (2) (d)]{HHHLP}, we have $\dot F_{u,{\rm D}}^{\beta_0,2}\hookrightarrow L^s_\omega\sim \dot F_{s,{\rm D}}^{0,2}$ for $1/u-1/s=\beta_0/{\bf N}$ and $u<s$.}
Since $\triangle_{\rm D}^{-\alpha/2}\triangle_{\rm D}^{-\beta/2}=\triangle_{\rm D}^{-{(\alpha+\beta)}/2}$,
we get rid of the assumption $\alpha+\beta<1$ by the method in \cite[Facts 1.9 and 1.10]{P} and hence
\begin{equation}\label{h22}
 \|h_{22}\|_{\dot F_{q,{\rm D}}^{\beta,\infty}} 
 \lesssim \|b\|_{\Lambda^\beta}\|f\|_{L^p_\omega}.
\end{equation}

Finally, we turn to estimate $h_{\sigma_j 2}$. 
To this end we use the same method in the estimate $g_{\sigma_j 2}$ and Lemma \ref{lem 4.5} to get
\begin{align*}
&\frac1{\omega(B)^{\frac\alpha{\bf N}}}\int_B |K_\alpha(z,y)|d\omega(y)\\
&\lesssim {\mathfrak R}^{-k\alpha}\int_B  \frac{d(z,y)^{2+\alpha}}{\|z-y\|^2}\frac1{\omega(B(z,d(z,y)))}d\omega(y)\\
&\lesssim \frac{{\mathfrak R}^k}{{\mathfrak R}^k+\|x-\sigma_j(x)\|}\sum_{j=-4}^\infty 2^{-j(1-\alpha)}\int_{d(z,y)\sim 2^{-j}{\mathfrak R}^k}     \frac1{\omega(B(z,d(z,y)))}d\omega(y)\\
&\lesssim \frac{{\mathfrak R}^k}{{\mathfrak R}^k+\|x-\sigma_j(x)\|}
\end{align*}
for $\alpha<1$ and hence
$$
\frac1{\omega(B)^{1+\frac\alpha{\bf N}} \ell(B)^\beta}\int_B |h_{\sigma_j 2}(y)-(h_{\sigma_j 2})_B|d\omega(y)
\lesssim \|b\|_{\Lambda^\beta}M(f)(\sigma_j(x)).
$$
Thus, we use Theorems \ref{thm 4.2} and \ref{lpq bounded} to obtain
\begin{align*}
\|h_{\sigma_j 2}\|_{\dot F_{q,{\rm D}}^{\beta,\infty}}
&\lesssim  \Big\|\sup_{k\in \mathbb Z}\frac1{\ell(B(\cdot,{\mathfrak R}^k))^\beta\omega(B(\cdot,{\mathfrak R}^k))} \int_{B(\cdot,{\mathfrak R}^k)} |h_{\sigma_j 2}(y)-(h_{\sigma_j 2})_{B(\cdot,{\mathfrak R}^k)}|d\omega(y)\Big\|_{L^q_\omega}\\
&\lesssim  \Big\|\sup_{k\in \mathbb Z}\frac1{\ell(B(\cdot,{\mathfrak R}^k))^\beta\omega(B(\cdot,{\mathfrak R}^k))^{1+\frac{\alpha}{\bf N}}} \int_{B(\cdot,{\mathfrak R}^k)} |h_{\sigma_j 2}(y)-(h_{\sigma_j 2})_{B(\cdot,{\mathfrak R}^k)}|d\omega(y)\Big\|_{L^p_\omega}\\
&\lesssim   \|b\|_{\Lambda^\beta}\|M(f_2)(\sigma(\cdot))\|_{L^p_\omega}\\
&\lesssim  \|b\|_{\Lambda^\beta}\|f\|_{L^p_\omega}
\end{align*}
for $\alpha<1$.
Since $\triangle_{\rm D}^{-\alpha/2}\triangle_{\rm D}^{-\beta/2}=\triangle_{\rm D}^{-{(\alpha+\beta)}/2}$,
we get rid of the assumption $\alpha<1$ by the method in \cite[Facts 1.9 and 1.10]{P} and hence
\begin{equation}\label{hsigma2}
 \|h_{\sigma_j2}\|_{\dot F_{q,{\rm D}}^{\beta,\infty}} 
 \lesssim \|b\|_{\Lambda^\beta}\|f\|_{L^p_\omega}.
\end{equation}
By \eqref{h11}-\eqref{hsigma2},  the proof is complete.
\end{proof}

From the method of Theorem \ref{main 2}, we prove Theorem \ref{main 1} immediately.

\begin{proof}[Proof of Theorem \ref{main 1}]
 It suffices to verify that
$$\|([b,\triangle_{\rm D}^{-\alpha/2}]f)^\sharp\|_{L^q_\omega}
\lesssim \|b\|_{{\rm BMO}_{{\rm Dunkl}}}\|f\|_{L^p_\omega},$$
where $g^\sharp$ denotes the sharp maximal function
$$g^\sharp(x)=\sup_{B\ni x}\inf_{c\in \mathbb C}\frac1{\omega(B)}\int_{B} |g(y)-c|d\omega(y).$$
Note that the ${\rm BMO}_{{\rm Dunkl}}$  space has the similar result to Lemma \ref{lem 4.1} (see \cite[Lemma 4.1]{DH3}), Theorem \ref{main 1} follows from the proof of Theorem \ref{main 2}.
\end{proof}

   \bigskip
\noindent   
{\bf Acknowledgement: } 
The authors would like to thank the referee for carefully reading and checking the paper and for helpful suggestions, which made the paper more accurate and readable.
   \bigskip

\end{document}